\theoremstyle{definition}
\newtheorem{definition}{Definition}[section]
\newtheorem{example}[definition]{Example}
\newtheorem{question}[definition]{Question}
\newtheorem{remark}[definition]{Remark}
\theoremstyle{plain}
\newtheorem{proposition}[definition]{Proposition}
\newtheorem{lemma}[definition]{Lemma}
\newtheorem{theorem}[definition]{Theorem}
\numberwithin{equation}{section}
\def\fullref#1#2{%
    {#1\space\penalty 200\relax\ref{#2}}%
}
\newcommand{\defterm}[1]{\textit{#1}}
\newcommand{\fsa}[1]{\mathcal{#1}}
\newcommand{\cgen}[1]{#1^{\#}}
\newcommand{\pres}[2]{\left\langle #1\:|\:#2 \right\rangle}
\newcommand{\nset}{\mathbb{N}}
\newcommand{\zset}{\mathbb{Z}}
\newcommand{\emptyword}{\varepsilon}
\newcommand{\rel}[1]{\mathcal{#1}}
\newcommand{\elt}[1]{\overline{#1}}
\DeclareMathOperator{\id}{id}
\newcommand{\imderives}{\Rightarrow}
\newcommand{\derives}{\Rightarrow^*}
\newcommand{\Stab}{\mathrm{Stab}}
\newcommand{\rev}{\mathrm{rev}}
\def\rep#1{\underline{#1}}
\def\lex#1{#1_{\mathrm{Lex}}}
\def\slex#1{#1_{\mathrm{SLex}}}
\newcommand{\symgroup}[1]{\mathcal{S}_{#1}}
\def\dash{\unskip --- \ignorespaces}
\begin{document}

%% \begin{frontmatter}

\title{Markov semigroups, monoids, and groups}

%% \author[ajc]{Alan J. Cain\fnref{fn1,fn2}\corref{cor1}}
%% \ead{ajcain@fc.up.pt}

%% \author[vm]{Victor Maltcev\fnref{fn2}}
%% \ead{victor@mcs.st-andrews.ac.uk}

%% \address[ajc]{%
%% Centro de Matem\'{a}tica, Universidade do Porto, \\
%% Rua do Campo Alegre 687, 4169--007 Porto, Portugal
%% }
%% \address[vm]{%
%% School of Mathematics \& Statistics, University of St Andrews,\\
%% North Haugh, St Andrews, Fife KY16 9SS, United Kingdom
%% }

%% \cortext[cor1]{Corresponding author}
%% \fntext[fn1]{The first author's research was funded by the European
%%   Regional Development Fund through the programme {\sc COMPETE} and by the
%%   Portuguese Government through the {\sc FCT} --- Funda\c{c}\~{a}o para a Ci\^{e}ncia
%%   e a Tecnologia under the project PEst-C/MAT/UI0144/2011 and through an
%%   {\sc FCT} Ci\^{e}ncia 2008 fellowship.}

%% \fntext[fn2]{The authors thank Rostislav
%%   Grigorchuk and Michael Stoll for supplying references and
%%   offprints. The observation in
%%   \fullref{Remark}{rem:growthratesunequal} arose during a conversation
%%   with Nik Ru\v{s}kuc.}

\author{Alan J. Cain \& Victor Maltcev}
\date{}

\thanks{The first author's research was funded by the European
  Regional Development Fund through the programme {\sc COMPETE} and by the
  Portuguese Government through the {\sc FCT} \dash Fundação para a Ciência
  e a Tecnologia under the project PEst-C/MAT/UI0144/2011 and through an
  {\sc FCT} Ci\^{e}ncia 2008 fellowship. The authors thank Rostislav
  Grigorchuk and Michael Stoll for supplying references and
  offprints. The observation in
  \fullref{Remark}{rem:growthratesunequal} arose during a conversation
  with Nik Ru\v{s}kuc.}

\maketitle

\address[AJC]{%
Centro de Matem\'{a}tica, Universidade do Porto, \\
Rua do Campo Alegre 687, 4169--007 Porto, Portugal
}
\email{%
ajcain@fc.up.pt
}
\webpage{%
www.fc.up.pt/pessoas/ajcain/
}
\address[VM]{%
School of Mathematics \& Statistics, University of St Andrews,\\
North Haugh, St Andrews, Fife KY16 9SS, United Kingdom
}
\email{%
victor@mcs.st-andrews.ac.uk
}

\begin{abstract}
A group is Markov if it admits a prefix-closed regular language of
unique representatives with respect to some generating set, and
strongly Markov if it admits such a language of unique minimal-length
representatives over every generating set.  This paper considers the
natural generalizations of these concepts to semigroups and
monoids. Two distinct potential generalizations to monoids are shown
to be equivalent. Various interesting examples are presented,
including an example of a non-Markov monoid that nevertheless admits a
regular language of unique representatives over any generating set. It
is shown that all finitely generated commutative semigroups are
strongly Markov, but that finitely generated subsemigroups of
virtually abelian or polycyclic groups need not be. Potential
connections with word-hyperbolic semigroups are investigated. A study
is made of the interaction of the classes of Markov and strongly
Markov semigroups with direct products, free products, and
finite-index subsemigroups and extensions. Several questions are
posed.
\end{abstract}

%% \begin{keyword}
%% semigroups \sep regular languages \sep prefix-closed \sep normal forms \sep hyperbolic \sep Markov.

%% \MSC[2010] 20M35, 68Q45, 20M05.
%% \end{keyword}

%% \end{frontmatter}

%\tableofcontents

\section{Introduction}

The notion of Markov groups was introduced by Gromov in his seminal
paper on hyperbolic groups \cite[\S~5.2]{gromov_hyperbolic}, and
explored further by Ghys \& de~la Harpe~\cite{ghys_markov}. A group is
Markov if it admits a language of unique representatives, with respect
to some generating set, that can be described by a Markov grammar. In
this context, a Markov grammar is essentially a finite state automaton
with one initial state and every state being an accept state. The
connection with hyperbolic groups arises because every hyperbolic
group admits such a language of \emph{minimal-length} unique
representatives; such groups are said be strongly
Markov~\cite[Th\'{e}or\`{e}me~13]{ghys_markov}. Strongly Markov groups
have rational growth series with respect to any generating
set~\cite[Corollaire~14]{ghys_markov}.

The overarching aim of this paper is to begin to investigate the
natural generalization to semigroups of this notion of Markov
groups. A motivation for this is the fruitful generalization from
groups to semigroups of concepts involving automata and languages,
such as automatic structures (for groups, see \cite{epstein_wordproc},
for semigroups, \cite{campbell_autsg}), automatic presentations (see,
for example, \cite{oliver_autopresgroups,cort_apsg}), and automaton
semigroups (for groups, see the monograph~\cite{nekrashevych_ssg}, for
semigroups, see for example \cite{maltcev_cayley,silva_lamplighter}).

After recalling some necessary background definitions and results in
\fullref{\S}{sec:preliminaries}, the generalization of the definition
to monoids and semigroups is given in
\fullref{\S}{sec:definition}. The generalization to monoids is
immediate: a Markov monoid is a monoid admitting a language of unique
representatives described by a Markov grammar (again, essentially a
finite state automaton with a unique initial state and every state
being an accept state), which is equivalent to admitting a
prefix-closed regular language of unique representatives (see
\fullref{Proposition}{prop:markovlangauto} below). A monoid is
strongly Markov if it admits a prefix-closed language of unique
minimal-length representatives with respect to any generating
set. However, since the empty word is not in general a valid
representative for an element of a semigroup, generalizing the
definition to semigroups entails excluding the empty word from the
otherwise prefix-closed language of unique representatives. Thus there
are, for monoids, distinct notions of `Markov as a monoid' and `Markov
as a semigroup'; fortunately, the concepts turn out to be equivalent, as
proved in \fullref{\S}{sec:monoids}.

Some of the basic properties of Markov semigroups are explained in
\fullref{\S}{sec:properties}. An example of a non-Markov monoid that
nevertheless admits a regular (non-prefix-closed) language of unique
representatives with respect to any generating set is given in
\fullref{\S}{sec:regnotmarkov}. How certain rewriting systems
naturally give rise to Markov semigroups is shown in
\fullref{\S}{sec:rs}. That finitely generated commutative semigroups
are strongly Markov is shown in \fullref{\S}{sec:comm}. Next,
\fullref{\S}{sec:polycyclic} shows that finitely generated
subsemigroups of polycyclic or virtually abelian groups need not be
Markov, and discusses the importance of these
facts. \fullref{\S}{sec:examples} exhibits some other interesting
examples of Markov semigroups and some examples of non-Markov
semigroups.

Given the intimate connection between hyperbolic groups and Markov
groups discussed above, it is natural to look for a parallel between
semigroups that are word-hyperbolic in the sense of Duncan \& Gilman
\cite{duncan_hyperbolic} and Markov semigroups. However, as discussed
in \fullref{\S}{sec:hyperbolicity}, a word-hyperbolic semigroup need
not even admit a regular language of unique normal forms, let alone a
prefix-closed one.

\fullref{\S\S}{sec:adjoin}--\ref{sec:finind} examine the interaction
of Markov semigroups with adjoining identities and zeros, with direct
products, with free products, and with finite-index subsemigroups and
extensions. Finally, the class of languages that are Markov languages
for semigroups is considered in \fullref{\S}{sec:markovlang}.

Since Markov semigroups seem to be an entirely new area, there are
many possible directions for further research. Consequently, various
open questions are scattered throughout the paper in the relevant
contexts.

We remark that the research described in this paper has involved
drawing techniques, ideas, and examples from a broad swathe of
semigroup and formal language theory.

\section{Preliminaries}
\label{sec:preliminaries}

\subsection{Generators, alphabets, and words}

The notation used in this paper distinguishes a word from the element
of the semigroup or monoid it represents. Let $A$ be an alphabet
representing a set of generators for a semigroup or monoid
$S$. Formally, there is a map $\phi : A \to S$ that extends to a
surjective homomorphism $\phi : A^+ \to S$ (or $\phi : A^* \to S$ if
$S$ is a monoid).

While occasionally the representation map $\phi$ will be explicitly
mentioned, generally the following notational distinction will
suffice: for a word $w \in A^*$, denote by $\elt{w}$ the element of
$M$ represented by $w$ (so that $\elt{w} = w\phi$); for a set of words
$W \subseteq A^*$, denote by $\elt{W}$ the set of all elements of $S$
represented by at least one word in $W$. Notice that the emptyword
$\emptyword$ is a valid representative word if and only if $S$ is a
monoid.

\subsection{Languages and automata}

For background information on regular and context-free languages and
finite automata, see \cite[Ch.~2--4]{hopcroft_automata}.

Let $L$ be a language over an alphabet $A$. Then $L$ is \defterm{prefix-closed} if
\[
(\forall u \in A^*,v \in A^+)(uv \in L \implies u \in L),
\]
and $L$ is \defterm{closed under taking non-empty prefixes}, or more
succinctly \defterm{$+$-prefix-closed}, if
\[
(\forall u \in A^+,v \in A^+)(uv \in L \implies u \in L).
\]
Notice that if $L$ is prefix-closed and non-empty, it contains the
empty word $\emptyword$.

%\subsection{Automatic semigroups}

\subsection{String-rewriting systems}

This subsection contains facts about string rewriting needed later in
the paper. For further background information, see
\cite{book_srs}.

A \defterm{string rewriting system}, or simply a \defterm{rewriting
  system}, is a pair $(A,\rel{R})$, where $A$ is a finite alphabet and
$\rel{R}$ is a set of pairs $(\ell,r)$, known as \defterm{rewriting
  rules}, drawn from $A^* \times A^*$. The single reduction relation
$\imderives$ is defined as follows: $u \imderives v$ (where $u,v \in
A^*$) if there exists a rewriting rule $(\ell,r) \in \rel{R}$ and
words $x,y \in A^*$ such that $u = x\ell y$ and $v = xry$. That is, $u
\imderives v$ if one can obtain $v$ from $u$ by substituting the word
$r$ for a subword $\ell$ of $u$, where $(\ell,r)$ is a rewriting
rule. The reduction relation $\derives$ is the reflexive and
transitive closure of $\imderives$. The process of replacing a subword
$\ell$ by a word $r$, where $(\ell,r) \in \rel{R}$, is called
\defterm{reduction}, as is the iteration of this process.

A word $w \in A^*$ is \defterm{reducible} if it contains a subword $\ell$
that forms the left-hand side of a rewriting rule in $\rel{R}$; it is
otherwise called \defterm{irreducible}.

The string rewriting system $(A,\rel{R})$ is \defterm{noetherian} if
there is no infinite sequence $u_1,u_2,\ldots \in A^*$ such that $u_i
\imderives u_{i+1}$ for all $i \in \nset$. That is, $(A,\rel{R})$ is
noetherian if any process of reduction must eventually terminate with
an irreducible word. The rewriting system $(A,\rel{R})$ is
\defterm{confluent} if, for any words $u, u',u'' \in A^*$ with $u
\derives u'$ and $u \derives u''$, there exists a word $v \in A^*$
such that $u' \derives v$ and $u'' \derives v$. 

The string rewriting system $(A,\rel{R})$ is
\defterm{non-length-increasing} if $(\ell,r) \in \rel{R}$ implies
that $|\ell| \geq |r|$ and is \defterm{length-reducing} if $(\ell,r)
\in \rel{R}$ implies that $|\ell| > |r|$. Observe that any
length-reducing rewriting system is necessarily noetherian.

The rewriting system $(A,\rel{R})$ is \defterm{monadic} if it is
length-reducing and the right-hand side of each rule in $\rel{R}$ lies
in $A \cup \{\emptyword\}$; it is \defterm{special} if it is
length-reducing and each right-hand side is the empty word
$\emptyword$. Observe that every special rewriting system is also monadic.

The string rewriting system $(A,\rel{R})$ is \defterm{finite} if the
set of rules $\rel{R}$ is finite. A monadic rewriting system
$(A,\rel{R})$ is \defterm{regular} (respectively,
\defterm{context-free}), if, for each $a \in A \cup\{\emptyword\}$,
the set of all left-hand sides of rules in $\rel{R}$ with right-hand
side $a$ is regular (respectively, \defterm{context-free}).

Let $(A,\rel{R})$ be a confluent noetherian string rewriting
system. Then for any word $u \in A^*$, there is a unique irreducible
word $v \in A^*$ with $u \derives v$
\cite[Theorem~1.1.12]{book_srs}. The irreducible words are said to be
in \defterm{normal form}. The monoid presented by $\pres{A}{\rel{R}}$
may be identified with the set of normal form words under the
operation of `concatenation plus reduction to normal form'.

\section{Definitions}
\label{sec:definition}

As defined by Ghys \& de~la
Harpe~\cite[D\'{e}finition~4]{ghys_markov}, a group is Markov if it
admits a language of unique representatives defined by a
\defterm{Markov grammar}, which is essentially a finite state
automaton where every state is an accept state
\cite[D\'{e}finition~1]{ghys_markov}. The following result shows that
the class of languages recognized by such automata are the
prefix-closed regular languages. In general, arguments in this paper
work with regular expressions rather than explicitly constructed
automata, so this equivalences embodied in this result and in the
later \fullref{Proposition}{prop:semigroupmarkovlangauto} are
important.

\begin{proposition}
\label{prop:markovlangauto}
A regular language is prefix-closed if and only if it is recognized by
a finite state automaton in which every state is an accept state.
\end{proposition}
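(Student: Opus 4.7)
The plan is to prove both directions by working with a deterministic finite state automaton recognising $L$ and exploiting the closure property (or its negation) by restricting to accept states.

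For the forward direction, suppose $L$ is recognised by an FSA $\fsa{A}$ in which every state is accepting. Take any $uv \in L$ with $u \in A^*$, $v \in A^+$. A successful run of $\fsa{A}$ on $uv$ passes, after reading the prefix $u$, through some state $q$; since every state of $\fsa{A}$ is accepting, $q$ is accepting, so $u \in L$. This shows $L$ is prefix-closed.

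For the reverse direction, assume $L$ is prefix-closed and regular. If $L$ is empty, the statement is handled by a trivial automaton; otherwise $\emptyword \in L$. Take a DFA $\fsa{A} = (Q,A,\delta,q_0,F)$ recognising $L$. The idea is to form a new FSA $\fsa{A}'$ by deleting every non-accept state and every transition into or out of such a state, so that $\fsa{A}'$ has state set $F$, initial state $q_0$ (which lies in $F$ because $\emptyword \in L$), and every state accepting. The key observation, which uses prefix-closure, is that the path in $\fsa{A}$ corresponding to reading any $w \in L$ stays entirely inside $F$: after reading any prefix $u$ of $w$, one reaches the state $\delta(q_0,u)$, and since $u$ is a prefix of $w \in L$ we have $u \in L$, whence $\delta(q_0,u) \in F$. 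Therefore every transition used in a successful run of $\fsa{A}$ survives in $\fsa{A}'$, so $\fsa{A}'$ accepts $L$; conversely, any word accepted by $\fsa{A}'$ is accepted by $\fsa{A}$ since $\fsa{A}'$ is a sub-automaton whose accepting states are accepting states of $\fsa{A}$.

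Neither direction should pose genuine difficulty; the only mild subtlety is the degenerate case $L = \emptyset$, which must either be excluded by convention or handled by permitting an automaton with no states, and the need to start from a \emph{deterministic} automaton in the second half so that prefix-closure of $L$ translates directly into all intermediate states being accepting.
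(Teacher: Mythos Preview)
Your proof is correct. The direction ``all states accepting $\Rightarrow$ prefix-closed'' matches the paper's argument exactly. For the other direction, however, you take a slightly different route from the paper: rather than starting from a \emph{trim} deterministic automaton and arguing that every one of its states must already be accepting (since any state lies on an accepting path, and the label of the prefix up to that state lies in $L$ by prefix-closure), you start from an arbitrary DFA and construct a sub-automaton by deleting the non-accepting states and their incident transitions, then verify that the result still recognises $L$. Both arguments rest on the same observation---that in a DFA for a prefix-closed language, every state reached while reading a word of $L$ is accepting---but the paper uses it to show the trim DFA is already of the desired form, whereas you use it to justify a pruning construction. The paper's version is marginally slicker (no new automaton is built), while yours avoids invoking trimness and makes the role of determinism more visible. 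Your handling of the degenerate case $L=\emptyset$ is adequate, though the paper sidesteps it implicitly by working with a trim automaton.
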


\begin{proof}
Suppose $L$ is prefix-closed and let $\fsa{A}$ be a trim deterministic
finite state automaton recognizing $L$. Let $q$ be some state of
$\fsa{A}$. Since $\fsa{A}$ is trim, $q$ lies on a path from the
initial state to an accept state. Let $w$ be the label on such a path,
with $w'$ being the label before the first visit to $q$. Then $w'$,
being a prefix of $w$, also lies in $L$. Since $\fsa{A}$ is
deterministic, there is only one path starting at the initial state
labelled by $w'$, and this path ends at $q$. Since $w' \in L$, it
follows that $q$ is an accept state. Therefore, since $q$ was
arbitrary, every state of $\fsa{A}$ is an accept state.

Suppose that $L$ is accepted by an automaton $\fsa{A}$ in which every
state is an accept state. Let $w \in L$ and let $w'$ be some prefix of
$w$. Then $w$ labels a path starting at the initial state of $\fsa{A}$
and leading to an accept state. The prefix $w'$ labels an initial
segment of this path, ending at a state $q$, which, by hypothesis, is
also an accept state. Thus $w' \in L$. Since $w \in L$ was arbitrary,
$L$ is prefix-closed.
\end{proof}

In light of \fullref{Proposition}{prop:markovlangauto}, a group is
Markov if it admits a prefix-closed regular language of unique
representatives. Now, in generalizing the notion of being Markov from
groups to semigroups, one must change from monoid to semigroup
generating sets and modify the notion of the language of
representatives appropriately. For groups, the language of
representatives is taken over an alphabet representing a monoid
generating set for the group, with the empty word being the
representative of the identity. (Indeed, the empty word lies in any
non-empty prefix-closed language.) In generalizing to arbitrary
semigroups, it is necessary to use a semigroup generating set, in
which case the empty word is no longer admissable as a representative,
and the natural definition for the language of representatives
requires not prefix-closure, but only $+$-prefix-closure.

This raises a potential problem, in that a monoid (possibly a group)
could be Markov in two different ways: it could be Markov as a monoid
(allowing, or rather requiring, that the identity be represented by
the empty word), or Markov as a semigroup (requiring that the identity
be represented by a non-empty word). It is thus conceivable that the
class of monoids that are Markov as monoids and the class of monoids
that are Markov as semigroups are distinct. Fortunately, however, the
two notions are equivalent, as will be shown in
\fullref{\S}{sec:monoids}.

The definition of `Markov as a monoid' is given first, since it is the
more direct generalization from the group case:

\begin{definition}
Let $M$ be a monoid and let $A$ be a finite alphabet representing a
monoid generating set for $M$. For $x \in M$, let $\lambda_A(x)$ be the
length of the shortest word over $A$ representing $x$; this is called
the natural length of $x$. (Notice that $\lambda(1_M) = 0$.)

A \defterm{monoid Markov language} for $M$ over $A$ is a regular
language $L$ that is prefix-closed and contains a unique
representative for every element of $M$.

A \defterm{robust monoid Markov language} for $M$ over $A$ is a
regular language $L$ that is prefix-closed and contains a unique
representative for every element of $M$ such that $|w| =
\lambda_A(\elt{w})$ for every $w \in L$.

The monoid $M$ is \defterm{Markov} (as a monoid) if there exists a
monoid Markov language for $M$ over an alphabet representing some
monoid generating set for $M$.

The monoid $M$ is \defterm{robustly Markov} (as a monoid) with respect
to an alphabet $A$ representing a generating set for $M$ if there
exists a robust monoid Markov language for $M$ over $A$.

The monoid $M$ is \defterm{strongly Markov} (as a monoid) if, for
every alphabet $A$ representing a monoid generating set for $M$, there
exists a robust monoid Markov language for $M$ over $A$.
\end{definition}

The reason for introducing the term `robustly Markov' is because there
are many natural examples of semigroups that admit a Markov languages
of minimal-length representatives while not being strongly Markov (see
for example \fullref{Proposition}{prop:rsmarkov}), and consequently
such semigroups still enjoy certain pleasant properties.

Note that Ghys \& de la Harpe \cite{ghys_markov} use different
terminology: rather than `Markov (respectively, strongly Markov)
groups', they use (terms that translate as) `groups with the Markov
(respectively, strong Markov) property'. We prefer Gromov's original
terminology, since it does not clash with `Markov property' in the
sense of an undecidable semigroup-theoretic property
(see~\cite{markov_impossibility} and \cite[Theorem~7.3.7]{book_srs}).

\begin{definition}
Let $S$ be a semigroup and let $A$ be a finite alphabet representing a
generating set for $S$. For $x \in S$, let $\lambda_A(x)$ be the length
of the shortest non-empty word over $A$ representing $x$; this is
called the natural length of $x$. (Notice that if $S$ is a monoid,
$\lambda_A(1_S)$ is not zero.)

A \defterm{semigroup Markov language} for $S$ over $A$ is a regular
language $L$ that does not contain the empty word, is
$+$-prefix-closed, and contains a unique representative for every
element of $S$.

A \defterm{robust semigroup Markov language} for $S$ over $A$ is a
regular language $L$ that does not contain the empty word, is
$+$-prefix-closed, and contains a unique representative for every
element of $S$ such that $|w| = \lambda_A(\elt{w})$.

The semigroup $S$ is \defterm{Markov} (as a semigroup) if there exists a
semigroup Markov language for $S$ over an alphabet representing some
generating set for $S$.

The semigroup $S$ is \defterm{robustly Markov} (as a semigroup) with respect
to an alphabet $A$ representing a generating set for $S$ if there
exists a robust semigroup Markov language for $S$ over $A$.

The semigroup $S$ is \defterm{strongly Markov} (as a semigroup) if, for
every alphabet $A$ representing a generating set for $S$, there
exists a robust semigroup Markov language for $S$ over $A$.
\end{definition}

The following result is the parallel of
\fullref{Proposition}{prop:semigroupmarkovlangauto} that applies to
$+$-prefix-closed languages:

\begin{proposition}
\label{prop:semigroupmarkovlangauto}
A regular language that does not contain the empty word is
$+$-prefix-closed if and only if it is recognized by a finite state
automaton in which every state except the initial state is an accept
state, and in which there are no incoming edges to the initial state.
\end{proposition}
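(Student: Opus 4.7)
The plan is to mirror the proof of Proposition \ref{prop:markovlangauto}, augmented with the handling needed to control the initial state.

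For the forward direction, I would take a trim deterministic finite state automaton $\fsa{A}$ recognising $L$ and establish two claims. First, every non-initial state is accepting: given a non-initial state $q$, trimness produces a path from $q_0$ through $q$ to an accepting state, labelled by a word $w = w'v$ where $w'$ labels the segment up to the first visit to $q$. Since $q \neq q_0$, we have $w' \in A^+$; if $v = \emptyword$ then $q$ itself is the accepting endpoint, and otherwise $v \in A^+$, so $+$-prefix-closure forces $w' \in L$, placing $q$ among the accepting states by determinism. Secondly, no edge enters $q_0$: suppose an edge $p \edge{a} q_0$ existed. Reach $p$ via some word $u \in A^*$ and, using co-reachability, escape $q_0$ to an accepting state via some word $v$. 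Since $\emptyword \notin L$, the state $q_0$ is not accepting, so the escape word satisfies $v \in A^+$. Then $uav \in L$ with both $ua$ and $v$ in $A^+$, so $+$-prefix-closure yields $ua \in L$, which would make $q_0$ accepting; contradiction.

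For the converse direction, I would simply trace in the given automaton. Given $w \in L$ and any factorisation $w = w'v$ with $w', v \in A^+$, the path labelled $w$ starting at $q_0$ reaches some state $q$ after reading $w'$. The absence of incoming edges to $q_0$ together with $|w'| \geq 1$ forces $q \neq q_0$, so $q$ is accepting by hypothesis and $w' \in L$, establishing $+$-prefix-closure.

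The only real departure from Proposition \ref{prop:markovlangauto} lies in the argument excluding edges into $q_0$. The delicate point there is to observe that the escape word $v$ from $q_0$ must be non-empty, which is exactly where the hypothesis $\emptyword \notin L$ enters; I expect this to be the main step requiring care, since in the prefix-closed setting of the earlier proposition this issue does not arise.
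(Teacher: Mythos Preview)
Your proposal is correct and follows essentially the same approach as the paper's proof: both directions use a trim deterministic automaton, argue that non-initial states are accepting via trimness and $+$-prefix-closure, and derive a contradiction from a hypothetical edge into $q_0$ by constructing an accepted word whose non-empty prefix lands at the non-accepting $q_0$. Your treatment is in fact slightly more careful than the paper's in distinguishing the case $v = \emptyword$ in the first claim and in making explicit where the hypothesis $\emptyword \notin L$ is used.
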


\begin{proof}
Suppose $L$ is $+$-prefix-closed and does not contain the empty
word. Let $\fsa{A}$ be a trim deterministic finite state automaton
recognizing $L$. Since $L$ does not contain the empty word, the
initial state $q_0$ is not an accept state. Let $q$ be some other
state of $\fsa{A}$. Since $\fsa{A}$ is trim, $q$ lies on a path from
the initial state to an accept state. Let $w$ be the label on such a
path, with $w' \neq \emptyword$ being the label before the first visit
to $q$. Then $w'$, being a non-empty prefix of $w$, also lies in
$L$. Since $\fsa{A}$ is deterministic, there is only one path starting
at the initial state labelled by $w'$, and this path ends at
$q$. Since $w' \in L$, it follows that $q$ is an accept
state. Therefore, since $q$ was arbitrary, every state of $\fsa{A}$ is
an accept state. Finally, suppose, with the aim of obtaining a
contradiction, that there is an incoming edge from a state $p$ to the
initial state $q_0$. Then, since $\fsa{A}$ is trim, there is a word
$w$ labelling a path from $q_0$ to an accept state, including this
edge from $p$ to $q_0$. Let $w'$ be the prefix of $w$ labelling the
non-empty initial segment of the path from $q_0$ back to $q_0$. Then,
since $q_0$ is not an accept state and $\fsa{A}$ is deterministic, $w'
\notin L$, contradicting the fact that $L$ is $+$-prefix-closed. Hence
there are no edges ending at $q_0$.

Suppose that $L$ is accepted by an automaton $\fsa{A}$ in which every
state except the initial state is an accept state, and in which the
initial state has no incoming edges. Let $w \in L$ and let $w'$ be
some prefix of $w$. Then $w$ labels a path starting at the initial
state of $\fsa{A}$ and leading to an accept state. The prefix $w'$
labels an initial segment of this path, ending at a state $q$, which
cannot be the initial state, since it has no incoming edges, and must
therefore, by hypothesis, be an accept state. Thus $w' \in L$. Since
$w \in L$ was arbitrary, $L$ is prefix-closed.\qedhere
\end{proof}

\section{Markov monoids}
\label{sec:monoids}

As remarked in \fullref{\S}{sec:definition}, it is conceivable that
the class of monoids that are Markov as monoids and the class of
monoids that are Markov as semigroups are distinct, and the same issue
arises for being robustly Markov and strongly Markov. Fortunately, for
monoids the monoid and semigroup notions are equivalent, as the
following three results show:

\begin{proposition}
\label{prop:markovsgmon}
A monoid is Markov as a semigroup if and only if it is Markov as a monoid.
\end{proposition}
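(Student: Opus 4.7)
The plan is to prove both implications separately, allowing the generating set to change between the monoid and semigroup versions.

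For the monoid-implies-semigroup direction, the idea is simply to enlarge the alphabet to accommodate a non-empty representative of the identity. Given a prefix-closed regular language $L$ of unique representatives for $M$ over a monoid generating alphabet $A$, I will introduce a fresh letter $c$ with $\elt{c} = 1_M$, form $B = A \cup \{c\}$ (which is a semigroup generating set since now $1_M \in \elt{B^+}$), and set $L' = (L \setminus \{\emptyword\}) \cup \{c\}$. This $L'$ is regular; it has $c$ as the unique non-empty representative of $1_M$ and inherits all other unique representatives from $L$; and it is $+$-prefix-closed, because the deletion of $\emptyword$ cannot break $+$-prefix-closure (the empty word is never a non-empty proper prefix), the singleton $\{c\}$ trivially contributes nothing to break it, and words of $L'$ involving $c$ occur only as the letter $c$ itself.

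The converse direction is more delicate. Given a semigroup Markov language $L'$ over $B$, let $w_0 \in L'$ be the unique representative of $1_M$. The natural attempt to reuse $B$ as a monoid generating set and to form $(L' \setminus \{w_0\}) \cup \{\emptyword\}$ can fail to be prefix-closed, because $w_0$ may legitimately be a proper prefix of other words of $L'$: for instance, if $\elt{b} = 1_M$ and $\elt{a}$ has order $2$, then $L' = \{b, ba\}$ is a semigroup Markov language in which $w_0 = b$ is a prefix of $ba$. The plan is instead to define
\[
L = \{\emptyword\} \cup (L' \setminus w_0 B^*) \cup \{v \in B^+ : w_0 v \in L'\},
\]
i.e.\ keep those words of $L'$ not beginning with $w_0$, and strip the prefix $w_0$ off those that do. Regularity is immediate from closure of regular languages under Boolean operations and left quotient by the fixed word $w_0$.

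The two remaining tasks are to verify prefix-closure and uniqueness of representatives, both by a short case analysis driven by which summand of $L$ contains a given word. For prefix-closure: a prefix of a word in $L' \setminus w_0 B^*$ still does not begin with $w_0$ and lies in $L'$ by $+$-prefix-closure; a prefix $y$ of some $v$ with $w_0 v \in L'$ satisfies $w_0 y \in L'$ since $w_0 y$ is a prefix of $w_0 v$. For uniqueness: the unique $L'$-representative $w_x$ of a non-identity element $x$ either does not begin with $w_0$ (in which case $w_x \in L$) or equals $w_0 v$ for some $v \in B^+$ (in which case $v \in L$), and uniqueness in $L'$ rules out any second candidate in $L$; meanwhile, no non-empty word in $L$ can represent $1_M$, since such a candidate $v$ would make $w_0 v \in L'$ an alternative representative of $1_M$ in $L'$, forcing $v = \emptyword$. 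The main obstacle in the whole argument is precisely this backward direction: the asymmetry between prefix-closure (which contains $\emptyword$) and $+$-prefix-closure (which does not), combined with the possibility that $w_0$ genuinely occurs as an internal prefix of $L'$, obstructs any naive set-theoretic deletion and forces the prefix-stripping construction above.
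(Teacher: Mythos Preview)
Your proof is correct and follows essentially the same approach as the paper. Both directions use identical constructions: adjoining a fresh identity symbol to pass from monoid to semigroup, and defining $K = (L - wA^*) \cup \{u \in A^* : wu \in L\}$ (equivalently, your $\{\emptyword\} \cup (L' \setminus w_0 B^*) \cup \{v \in B^+ : w_0 v \in L'\}$) to strip the identity-prefix $w_0$ when passing from semigroup to monoid; your verification of prefix-closure and uniqueness is slightly more detailed than the paper's but proceeds along the same lines.
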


\begin{proof}
Let $M$ be a monoid.

Suppose that $M$ is Markov as a monoid. Let $A$ be an alphabet
representing a monoid generating set for $M$ such that there is a
monoid Markov language $L$ for $M$ over $A$. Then $L$ is
prefix-closed, regular, and contains a unique representative for each
element of $M$. In particular, the identity of $M$ is represented by
$\emptyword \in L$. Let $1$ be a new symbol representing the identity
for $M$. Then $K = (L - \{\emptyword\}) \cup \{1\}$ is
$+$-prefix-closed, regular, and contains a unique representative for
every element of $M$. Hence $K$ is a semigroup Markov language for $M$
and thus $M$ is Markov as a semigroup.

\smallskip
Suppose now that $M$ is Markov as a semigroup. Let $A$ be an alphabet
representing a semigroup generating set for $M$ such that there is a
semigroup Markov language $L$ for $M$ over $A$. Then $L$ is
$+$-prefix-closed, regular, and contains a unique representative for
every element of $M$. Let $w$ be the unique word in $L$ representing
the identity of $M$. Let
\[
K = \big(L - wA^*\big) \cup \{u \in A^* : wu \in L\}.
\]
Since $L$ is $+$-prefix-closed and $wA^*$ is closed under
concatenation on the right, $L - wA^*$ is also $+$-prefix
closed. Furthermore, $\{u \in A^* : wu \in L\}$ is
prefix-closed. (Notice that this set contains $\emptyword$ since $w$
lies in $L$.) So $K$ is prefix-closed. Moreover, $wu$ and $u$ represent
the same element of $M$ for any $u \in A^*$, so $\{u \in A^* : wu \in
L\}$ consists of unique representatives for exactly those elements
of $M$ whose representatives in $L$ have $w$ as a prefix. Hence every
element of $M$ has a unique representative in $K$. Finally, notice
that $K$ is regular. Thus $K$ is a monoid Markov language for $M$ and
so $M$ is Markov as a monoid.
\end{proof}

\begin{proposition}
\label{prop:robustlymarkovsgmon}
\begin{enumerate}
\item If a monoid is robustly Markov as a monoid with respect to some
  alphabet $A$ representing a semigroup generating set, it is also
  robustly Markov as a semigroup with respect to $A$.  Furthermore, if
  a monoid is robustly Markov as a monoid with respect to an alphabet
  $B$ representing a monoid generating set that is not also a
  semigroup generating set, then it is robustly Markov as a semigroup
  with respect to $B \cup \{1\}$, where $1$ represents the identity.
\item If a monoid is robustly Markov as a semigroup with respect to
  some alphabet $A$ representing a (semigroup) generating set, then it
  is robustly Markov as a monoid with respect to $A$. Furthermore, if
  a monoid is robustly Markov as a semigroup with respect to $B \cup
  \{1\}$, where $B$ represents a monoid generating set and $1$
  represents the identity, then it is robustly Markov as a monoid with
  respect to $B$.
\end{enumerate}
\end{proposition}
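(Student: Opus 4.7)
The proposition comprises four implications; my plan is to prove each by constructing an explicit modification of the given robust Markov language that changes only the representative of the identity $1_M$. The general strategy parallels the proof of \fullref{Proposition}{prop:markovsgmon}, but the length-minimality condition built into robustness needs careful handling.

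Two of the implications have essentially immediate constructions. For the second sub-part of~(1) \dash robustly monoid Markov over~$B$ yielding robustly semigroup Markov over~$B \cup \{\mathbf{1}\}$ \dash I would take $L' = (L - \{\emptyword\}) \cup \{\mathbf{1}\}$, where $\mathbf{1}$ is the new identity symbol. Since $\mathbf{1}$ has no proper non-empty prefixes, $+$-prefix-closedness is automatic, and robustness follows from $\lambda^{\mathrm{sg}}_{B \cup \{\mathbf{1}\}}(1_M) = 1$ together with the agreement of $\lambda^{\mathrm{sg}}_{B \cup \{\mathbf{1}\}}(x)$ and $\lambda^{\mathrm{mon}}_B(x)$ for every $x \neq 1_M$.

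Both sub-parts of~(2) rest on a length-minimality observation: if $L$ is a robust semigroup Markov language and $w$ is its unique representative of $1_M$, then no word of the form $wu$ with $u \in A^+$ can lie in $L$, since $u$ itself would be a strictly shorter non-empty representative of $\elt{u} = \elt{wu}$. Hence $wA^* \cap L = \{w\}$, so the construction $K = (L - wA^*) \cup \{u : wu \in L\}$ from \fullref{Proposition}{prop:markovsgmon} collapses to $K = (L - \{w\}) \cup \{\emptyword\}$, which I would verify is a robust monoid Markov language. For the second sub-part, the same observation forces $|w| = \lambda^{\mathrm{sg}}_{B \cup \{\mathbf{1}\}}(1_M) = 1$ and then $w = \mathbf{1}$ (no letter of $B$ represents $1_M$, since $B$ is not a semigroup generating set); a parallel argument shows that $\mathbf{1}$ does not occur in any other word of $L$, so $L \subseteq B^+ \cup \{\mathbf{1}\}$ and the resulting $K$ lies over $B$ as required.

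The remaining and hardest implication is the first sub-part of~(1): producing a robust semigroup Markov language $L'$ over $A$ from a robust monoid Markov language $L$ over the same alphabet $A$. The natural candidate is $L' = (L - \{\emptyword\}) \cup \{w_0\}$ for a suitable $w_0 \in A^+$ representing $1_M$, and the non-trivial step is to choose $w_0$ so that every proper non-empty prefix of $w_0$ already lies in $L$, thereby preserving $+$-prefix-closedness. An arbitrary minimal-length non-empty representative of $1_M$ will not do in general; instead, my plan is to take $w_0 = va$, where $u'a$ is any word of length $n := \lambda^{\mathrm{sg}}_A(1_M)$ with $\elt{u'a} = 1_M$, and $v$ is the (unique) $L$-representative of the element $\elt{u'}$. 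A short minimality argument shows $|v| = n - 1$, hence $|w_0| = n$, and prefix-closedness of $L$ then guarantees that all proper non-empty prefixes of $w_0$ \dash being non-empty prefixes of $v \in L$ \dash lie in $L - \{\emptyword\}$. The remaining verifications (regularity, uniqueness, correct length) use that $\lambda^{\mathrm{sg}}_A(x) = \lambda^{\mathrm{mon}}_A(x)$ for every $x \neq 1_M$. This delicate choice of $w_0$ is the main obstacle; once it is in place, the other verifications are routine.
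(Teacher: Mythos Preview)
Your proposal is correct. For the second sub-part of~(1) and both sub-parts of~(2), your constructions coincide with the paper's. For the first sub-part of~(1), however, your route is a genuine simplification: the paper picks an \emph{arbitrary} shortest non-empty word $w = w_1\cdots w_n$ representing $1_M$, lets $p_i \in L$ be the representative of $\elt{w_1\cdots w_i}$ for each $i$, and then rebuilds the language by deleting every cone $p_iA^*$ and reinserting the translated cones $\{w_1\cdots w_i u : p_iu \in L\}$. Your choice $w_0 = va$, with $v$ already the $L$-representative of the penultimate element, guarantees that every proper non-empty prefix of $w_0$ is already in $L$, so the paper's prefix-replacement machinery becomes vacuous and you need only adjoin the single word $w_0$. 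This buys a shorter, cleaner argument with no bookkeeping over unions of cones; the paper's construction, by contrast, works for any choice of $w$ at the cost of that extra structure.

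One small wrinkle: in the second sub-part of~(2) you deduce $w = \mathbf{1}$ from the claim that ``$B$ is not a semigroup generating set,'' but the hypothesis says only that $B$ is a monoid generating set, which does not preclude some $b \in B$ satisfying $\elt{b} = 1_M$. This does not damage the argument: what you actually need is that the symbol $\mathbf{1}$ appears in no word of $L$ other than (possibly) the length-one representative $w$ of $1_M$, and your ``parallel argument'' (deleting an occurrence of $\mathbf{1}$ strictly shortens a word without changing the element it represents, contradicting robustness) delivers precisely this, whether $w = \mathbf{1}$ or $w = b$. Either way $L - \{w\} \subseteq B^+$, and the conclusion follows.
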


\begin{proof}
Let $M$ be a monoid.
\begin{enumerate}
\item Suppose that $M$ admits a robust monoid Markov language $L$ over
$A$. Since $\elt{A}$ generates $M$ as a semigroup, one can choose a
shortest non-empty word $w$ over $A$ representating the identity of
$M$. Let $w = w_1\cdots w_n$, with $w_i \in A$. For each non-empty
prefix $w_1\cdots w_i$ of $w$, let $p_i$ be the unique element of $L$
representing the same element of $M$ as this prefix. Notice that if an
element of $L$ has a prefix representing $\elt{w_1\cdots w_i}$, that
prefix must be $p_i$ by the prefix-closure of $L$ and the fact that it
maps bijectively onto $M$. Moreover, the length of $p_i$ must be the
same as the length of $w_1\cdots w_i$. To find a robust semigroup
Markov language for $M$ over $A$, it is necessary to replace the
prefixes $p_i$ by $w_1\cdots w_i$ and the empty word $\emptyword$ by
$w$. More formally, let
\[
K = \Big(\big(L - \{\emptyword\}\big) - \bigcup_{i=1}^n p_iA^*\Big) \cup \{w\} \cup \bigcup_{i=1}^n \{w_1\cdots w_i u : p_iu \in L\}.
\]
Now, $L - \{\emptyword\}$ is $+$-prefix-closed. Since each language
$p_iA^*$ is closed under concatenation on the right,
\[
\big(L - \{\emptyword\}\big) - \bigcup_{i=1}^n p_iA^*
\]
is $+$-prefix-closed. Furthermore,
\[
\{w\} \cup \bigcup_{i=1}^n \{w_1\cdots w_i u : p_iu \in L\}
\]
is $+$-prefix-closed since $L$ is and since every prefix of $w$ is in
this set. Therefore $K$ is $+$-prefix-closed. Furthermore, $K$ is
regular and, by definition, maps bijectively onto $M$. Finally, since
$|p_i| = |w_1\cdots w_i|$, it follows that the representative in $K$
of an element of $M$ is the same length as its representative in $L$,
excepting that the identity is represented by the non-empty word $w$
in $K$. So $K$ is a robust semigroup Markov language over
$A$ for $M$.

For the final claim, let $L$ be a robust monoid Markov language for
$M$ over $B$. Then $1$ is a shortest non-empty representative of $1_M$
over the alphabet $B \cup \{1\}$. Then $K = (L - \{\emptyword\}) \cup
\{1\}$ is a regular, $+$-prefix-closed, and consists of minimal-length
unique representatives for $M$. So $K$ is a robust semigroup Markov
language for $M$.

\item Suppose that $M$ admits a robust semigroup Markov language $L$ over an
alphabet $A$ representing a semigroup generating set for $M$. 

Let $w \in L$ be the representative of the identity of $M$. Since $L$
does not contain the empty word, $|w| \geq 1$. Suppose that some word
$u \in L$ contains $w$ as a proper subword, with $u = u'wu''$. Then
$\elt{u'u''} = \elt{u}$ and $|u'u''| < |u|$, which contradicts the
fact that representatives in $L$ are supposed to be length-minimal. So
$w$ is not a proper subword of any word in $L$. In particular, $L' = L
- \{w\}$ is $+$-prefix-closed.

Notice that $L'$ is $+$-prefix-closed, regular, and
consists of unique representatives having minimal length (over $A$)
for non-identity elements of $M$. Thus $K = L' \cup \{\emptyword\}$ is
prefix-closed, regular, and consists of unique representatives for all
elements of $M$. So $K$ is a robust monoid Markov language over $A$
for $M$.

For the final claim, let $A = B \cup \{1\}$ and follow the same
reasoning. In this case, $1$ is the minimal-length representative for
$1_M$ and does not occur as a subword of any other element of $L$. So
$L' \subseteq B^+$ and so $K$ is a robust monoid Markov language over $B$ for
$M$. \qedhere
\end{enumerate}
\end{proof}

The following result is a consequence of
\fullref{Proposition}{prop:robustlymarkovsgmon}:

\begin{proposition}
\label{prop:stronglymarkovsgmon}
A monoid is strongly Markov as a semigroup if and only if it is
strongly Markov as a monoid.
\end{proposition}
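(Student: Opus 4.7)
The plan is to reduce the equivalence to a per-alphabet statement and then apply \fullref{Proposition}{prop:robustlymarkovsgmon} twice, once in each direction. Since being strongly Markov (in either sense) is a universal quantification over generating alphabets, the core task is: given an arbitrary alphabet witnessing the quantifier on one side, produce a suitable robust Markov language from the hypothesis on the other side, possibly after enlarging the alphabet by a symbol $1$ for the identity.

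For the direction \emph{strongly Markov as a monoid implies strongly Markov as a semigroup}, I would let $A$ be an arbitrary alphabet representing a semigroup generating set for $M$. Then $A$ automatically represents a monoid generating set, so the hypothesis yields a robust monoid Markov language over $A$, and the first part of \fullref{Proposition}{prop:robustlymarkovsgmon}(1) converts it into a robust semigroup Markov language over the same alphabet $A$. This direction is entirely straightforward because no change of alphabet is needed.

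For the converse, \emph{strongly Markov as a semigroup implies strongly Markov as a monoid}, I would let $B$ be an arbitrary alphabet representing a monoid generating set for $M$ and split into two cases. If $B$ also represents a semigroup generating set, apply the hypothesis at $B$ to obtain a robust semigroup Markov language over $B$, then invoke the first part of \fullref{Proposition}{prop:robustlymarkovsgmon}(2) to convert it into a robust monoid Markov language over $B$. If $B$ is not a semigroup generating set (i.e.\ $1_M$ is not expressible as a non-empty word over $B$), then $B \cup \{1\}$ is a semigroup generating set; apply the hypothesis to this enlarged alphabet to obtain a robust semigroup Markov language over $B \cup \{1\}$, and then use the second part of \fullref{Proposition}{prop:robustlymarkovsgmon}(2) to recover a robust monoid Markov language over $B$ itself.

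The main (and only) subtle point is the case distinction in the converse: one has to be careful that a monoid generating set need not be a semigroup generating set, so the hypothesis on the semigroup side cannot be applied directly at $B$ in general and one must pass through $B \cup \{1\}$. This is precisely the situation that the second parts of each clause of \fullref{Proposition}{prop:robustlymarkovsgmon} were tailored to handle, so once the case split is made the remainder of the argument is a direct appeal.
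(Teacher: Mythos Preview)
Your proposal is correct and follows essentially the same approach as the paper, reducing both directions to applications of \fullref{Proposition}{prop:robustlymarkovsgmon}. The only difference is cosmetic: in the converse direction the paper does not make your case split but simply passes through $B \cup \{1\}$ in every case (which is always a semigroup generating set) and applies the second part of \fullref{Proposition}{prop:robustlymarkovsgmon}(2); your extra case where $B$ is already a semigroup generating set is handled correctly but is not needed.
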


\begin{proof}
Let $M$ be a monoid.

Suppose $M$ is strongly Markov as a monoid. Let $A$ be an alphabet
representing a semigroup generating set for $M$. Then $M$ is robustly
Markov as a monoid with respect to $A$. By the first part of
\fullref{Proposition}{prop:robustlymarkovsgmon}, $M$ is robustly
Markov as a semigroup with respect to $A$. Since $A$ was an arbitrary
alphabet representing a semigroup generating set for $M$, by
definition $M$ is strongly Markov as a semigroup.

Suppose $M$ is strongly Markov as a semigroup. Let $B$ be an alphabet
representing a monoid generating set for $M$. Then $M$ is robustly
Markov as a semigroup with respect to $B \cup \{1\}$, where $\elt{1} =
1_M$. By the second part of
\fullref{Proposition}{prop:robustlymarkovsgmon}, $M$ is robustly
Markov as a monoid with respect to $B$. Since $B$ was an arbitrary
alphabet representing a monoid generating set for $M$, by definition
$M$ is strongly Markov as a monoid.
\end{proof}

In light of \fullref{Propositions}{prop:markovsgmon},
\ref{prop:robustlymarkovsgmon}, and \ref{prop:stronglymarkovsgmon}, there is no
need for a terminological distinction between the conditions `Markov
as a semigroup' and `Markov as a monoid', between `robustly Markov as
a semigroup' and `robustly Markov as a monoid', and between `strongly
Markov as a semigroup' and `strongly Markov as a monoid': the terms
`Markov', `robustly Markov', and `strongly Markov' alone will suffice.

The results in this section parallel the situation for automatic
monoids: a monoid is automatic as a semigroup if and only if it is
automatic as a monoid~\cite[\S5]{duncan_change}.

\section{Basic properties}
\label{sec:properties}

It is important to note that a Markov language does not define a group
or semigroup up to isomorphism, unlike an automatic structure
\cite[Proposition~2.3]{kambites_decision}. To see this, notice that if
$A$ is a finite alphabet of size $n$, then $A$ (qua language of
one-letter words) is a semigroup Markov language for any semigroup of
size $n$, and $A \cup \{\emptyword\}$ is a monoid Markov language for
any monoid or group of size $n+1$. The language $(a^* \cup
(a^{-1})^*)(b^* \cup (b^{-1})^*)(c^* \cup (c^{-1})^*)$ is a Markov
language for both $\zset^3$ and the Heisenberg group
\cite[\S~5.2]{ghys_groupeshyperboliquessurvey}.

The \defterm{growth series} of a semigroup $S$ with respect to a finite alphabet
$A$ representing a generating set for $S$ is
\[
\Sigma(S,A) = \sum_{s \in S} x^{\lambda_A(x)},
\]
or equivalently
\[
\Sigma(S,A) = \sum_{n = 0}^\infty \sigma_A(n)x^n,
\]
where $\sigma_A(n) = |\{s \in S : \lambda_A(s) = n\}|$. A growth
series $\Sigma(S,A)$ is said to be \defterm{rational} if it is a power
series expansion of a rational function. 

\begin{theorem}
\label{thm:robustrational}
If a semigroup admits a robust Markov language with respect to a
particular generating set, then its growth series with respect to that
generating set is a rational function. A strongly Markov semigroup has rational
growth series with respect to any generating set.
\end{theorem}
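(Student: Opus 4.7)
The plan is to reduce the growth series $\Sigma(S,A)$ to the length-generating function of a regular language, and then appeal to the standard fact that this length-generating function is always rational.

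The definition of a robust semigroup Markov language $L$ for $S$ over $A$ demands precisely that $w \mapsto \elt{w}$ is a bijection $L \to S$ satisfying $|w| = \lambda_A(\elt{w})$ for every $w \in L$. Consequently
\[
\sigma_A(n) = |\{w \in L : |w| = n\}|
\]
for all $n \geq 0$, and so
\[
\Sigma(S,A) = \sum_{n \geq 0} |L \cap A^n|\,x^n.
\]
It therefore suffices to show that this structure generating function of the regular language $L$ is a rational function of $x$.

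For this I would take a deterministic finite state automaton $\fsa{B} = (Q,A,\delta,q_0,F)$ recognising $L$ and form the $Q \times Q$ transition-counting matrix $M$ defined by $M_{p,q} = |\{a \in A : \delta(p,a) = q\}|$. A routine induction on $n$ gives $|L \cap A^n| = \mathbf{e}_{q_0}^{\mathsf{T}} M^n \mathbf{f}$, where $\mathbf{e}_{q_0}$ and $\mathbf{f}$ are the indicator (column) vectors of $q_0$ and $F$ respectively. Summing the geometric matrix series yields
\[
\Sigma(S,A) = \mathbf{e}_{q_0}^{\mathsf{T}}(I - xM)^{-1}\mathbf{f},
\]
which is a rational function of $x$ by Cramer's rule. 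This establishes the first assertion. The second is then immediate: a strongly Markov semigroup admits, by definition, a robust Markov language with respect to every alphabet representing a generating set, so the first assertion applies to each of them.

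I do not expect any real obstacle. The argument is essentially the well-known transfer-matrix principle that a regular language has rational length-generating function, and the only semigroup-theoretic input is the length-preserving bijection between $L$ and $S$, which is built directly into the `robust' clause of the definition.
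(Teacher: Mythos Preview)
Your argument is correct and is precisely the standard transfer-matrix computation underlying the group case; the paper's own proof merely asserts that the proof for groups \cite[Corollaire~14]{ghys_markov} generalizes directly, so you have spelled out what the paper leaves implicit. No changes needed.
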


\begin{proof}
The proof for groups generalizes directly~\cite[Corollaire~14]{ghys_markov}.
\end{proof}

The independent importance of semigroup growth series (see, for example,
\cite[\S~4]{grigorchuk_growthsurvey}) means that, as a consequence of
\fullref{Theorem}{thm:robustrational}, robust Markov semigroups
are of considerably greater interest than Markov semigroups
generally.

\begin{remark}
\label{rem:growthratesunequal}
It is worth observing that the growth rate of a Markov language need
not mirror the growth of the semigroup or monoid. For example, all
finitely generated polycyclic groups are Markov
\cite[Corollaire~11]{ghys_markov}. Furthermore, the language of
collected words for a finitely generated polycyclic group forms a
Markov language \cite[p.~395]{sims_computation} and is easily seen to
have polynomial growth. However, a polycyclic group that is not
virtually nilpotent contains a free subsemigroup of rank~$2$
\cite[Theorem~4.12]{rosenblatt_invariant} and hence has exponential
growth.
\end{remark}

Being Markov implies the existence of a regular language of unique
normal forms over any finite generating set:

\begin{proposition}
\label{prop:markovchangegen}
Let $S$ be a semigroup that admits a regular language of unique normal
forms over some generating set (such as a Markov semigroup), and let
$A$ be a finite alphabet representing a generating set for $S$. Then
there is a regular language $L$ over $A$ such that every element of $S$ has a
unique representative in $L$.
\end{proposition}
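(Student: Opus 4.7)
The plan is a straightforward change-of-generators argument: starting from a regular language of unique representatives over the original generating alphabet, I will push it through a length-nondecreasing letter-to-word substitution to obtain a regular language of unique representatives over the new alphabet $A$. Since regular languages are closed under homomorphic images, regularity will be preserved, and uniqueness will follow automatically from the fact that a substitution that preserves represented elements cannot collapse two distinct unique representatives.

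More concretely, let $B$ be a finite alphabet representing a generating set for $S$ over which there is a regular language $K \subseteq B^+$ containing exactly one representative for each element of $S$. For each $b \in B$, the element $\elt{b}$ lies in $S$ and therefore, since $\elt{A}$ generates $S$, can be expressed as a non-empty word over $A$; choose such a word and call it $\phi(b) \in A^+$. Extend $\phi$ to a semigroup homomorphism $\phi : B^+ \to A^+$ by concatenation. By construction, $\elt{\phi(w)} = \elt{w}$ for every $w \in B^+$.

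Set $L = \phi(K) \subseteq A^+$. Since $K$ is regular and the image of a regular language under a homomorphism is regular, $L$ is regular over $A$. To see that $L$ contains a unique representative for every element of $S$, first observe that for any $s \in S$ the unique representative $w_s \in K$ of $s$ maps to a word $\phi(w_s) \in L$ also representing $s$, so every element is represented. For uniqueness, suppose $u, v \in K$ satisfy $\phi(u) = \phi(v)$; then
\[
\elt{u} = \elt{\phi(u)} = \elt{\phi(v)} = \elt{v},
\]
so $u = v$ by the unique-representative property of $K$. Hence $\phi$ is injective on $K$, and distinct words of $L$ represent distinct elements of $S$. Thus $L$ has the required property.

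There is no serious obstacle here; the only delicate point is to verify uniqueness of representatives in $L$, which is handled by the injectivity-on-$K$ observation above. Note that this proof gives only a regular language of unique representatives, not necessarily a prefix-closed or $+$-prefix-closed one; indeed, the question of whether being Markov is independent of the generating set is a separate and more subtle matter, addressed elsewhere in the paper.
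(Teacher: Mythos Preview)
Your proof is correct and is essentially the same as the paper's: where you use a semigroup homomorphism $\phi : B^+ \to A^+$ and set $L = \phi(K)$, the paper defines the rational relation $R = \{(b_1,u_{b_1})\cdots(b_n,u_{b_n}) : b_i \in B\}$ (which is precisely the graph of your $\phi$) and sets $L = K \circ R$. Both arguments then rest on the same observation, namely that each word of $K$ determines a unique word of $L$ representing the same element of $S$, so that $L$ inherits the bijection onto $S$ from $K$.
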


[Notice that even if $S$ is a Markov semigroup, the language $L$ need
  not be prefix-closed.]

\begin{proof}
Let $K$ be a regular language of unique normal forms for $S$ over some
finite alphabet $B$. For each $b \in B$, let $u_b \in A^+$ be such
that $u_b$ represents $\elt{b}$. Let $R \subseteq B^+ \times A^+$ be the
rational relation:
\[
R = \{(b_1,u_{b_1})(b_2,u_{b_2})\cdots(b_n,u_{b_n}) : b \in B, n \in \nset\}
\]
Notice that if $(v,w) \in R$, then $\elt{v} = \elt{w}$.

Let
\[
L = K \circ R = \big\{w \in A^* : (\exists v \in K)((v,w) \in R)\big\};
\]
observe that $L$ is a regular language. Notice that, by the definition
of $R$, for each word $v$ in $K$ there is exactly one word $w \in L$
with $(v,w) \in R$. Since for each $x \in S$ there is exactly one word
$v$ in $K$ with $\elt{v} = x$, it follows that there is exactly one
word $w \in L$ with $\elt{w} = x$. That is, the language $L$ maps
bijectively onto $S$.
\end{proof}

%% \fullref{Proposition}{prop:markovchangegen} suggests the following
%% problems:

%% \begin{question}
%% Find a non-Markov semigroup which admits a regular language of unique
%% normal forms with respect to every finite generating set. (That is,
%% regularity and uniqueness of representatives is achievable over any
%% generating set, but prefix-closure is never achievable.)
%% \end{question}

\section{A non-Markov monoid with a regular set of unique representatives}
\label{sec:regnotmarkov}

This section exhibits a non-Markov monoid that nevertheless admits a
regular language of unique representatives over any alphabet
representing a finite generating set.  (That is, regularity and
uniqueness of representatives is achievable over any alphabet
representing a generating set, but prefix-closure is never achievable.)
This is important because it shows that the classes of Markov
semigroups and monoids are properly contained in the classes of
semigroups and monoids admitting regular languages of unique normal
forms: the requirement of prefix-closure properly restricts the
classes under consideration.

The example depends on the following construction from
\cite[\S~5]{maltcev_hopfian}.

\begin{definition}
For any action of a semigroup $S$ on a set $T$, define a new semigroup
$S[T]$ as follows. The carrier set is $S \cup T$; multiplication in
$S$ remains the same, and for $s \in S$ and $x,y \in T$,
\[
sx = x,\qquad xs = x\cdot s,\qquad xy = y.
\]
It is straightforward to check that this multiplication is
associative.
\end{definition}

To construct the example, proceed as follows. Let $F$ and $F'$ be free
monoids with bases $X = \{x,y\}$ and $X' = \{x',y'\}$ respectively and let
\[
R = \{w \in F' : \text{$|w|_{y'}$ is even}\}.
\]
Let $w_0,w_1,w_2,\ldots$ be the elements of $R$ enumerated in
length-plus-lexicographic order. Define $\psi : \nset \cup \{0\} \to
R$ by $j \mapsto w_j$, so that $\psi$ is a bijection between $\nset
\cup \{0\}$ and $R$. Notice that $|j\psi| < 2^j$ for all $j \in \nset
\cup \{0\}$. Let
\begin{align*}
P &= \{p_i : i \in \nset\}, \\
Q &= \{q_i : i \in \nset \land \neg(\exists j \in \nset \cup \{0\})(i = 2^j)\}, \\
T &= P \cup Q \cup F' \cup \{\Omega\}.
\end{align*}
Define an action of the generators $x$ and $y$ on the set $T$ as follows:
\begin{align*}
p_i \cdot x &= p_{i+1}, \\
p_i \cdot y &= \rlap{$\begin{cases} q_i & \text{if $i \neq 2^j$ for any $j \in \nset \cup \{0\}$}, \\
j\psi & \text{if $i = 2^j$},
\end{cases}$} \\
q_i \cdot x  &= \Omega, & w \cdot x &= wx' \text{~~(for $w \in F'$)}, & \Omega \cdot x  &= \Omega, \\
q_i \cdot y  &= \Omega, & w \cdot y &= wy' \text{~~(for $w \in F'$)}, & \Omega \cdot y  &= \Omega.
\end{align*}
\fullref{Figure}{fig:nonmarkovaction} illustrates the graph of the action of
$X$ on $T$. Since $F$ is free on $X$, this action extends to a
unique action of $F$ on $T$.

\begin{figure}[t]
\centerline{\includegraphics{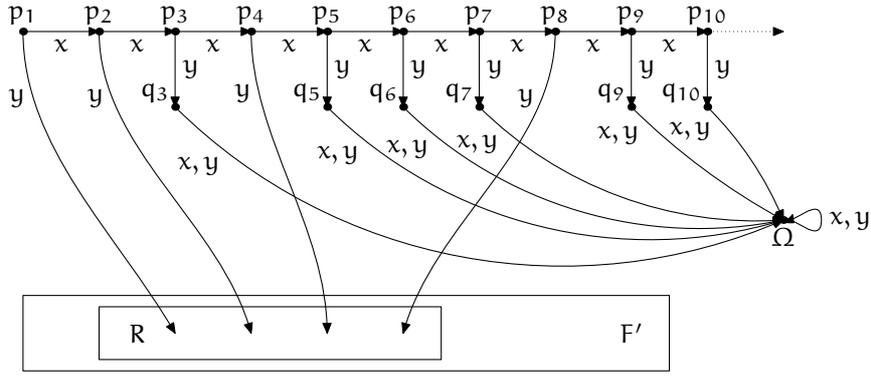}}
\caption{An outline of the graph of the action of $X$ on $T$.}
\label{fig:nonmarkovaction}
\end{figure}

The aim is to show that $F[T]$ is not Markov but nevertheless admits a
regular language of unique representatives over any finite alphabet
representing a generating set.

Notice that in $F[T]$, elements of $F$ multiply as in the free monoid
and act on $T$. Elements of $F'$ are members of the set $T$ and thus
multiply like right zeroes.

\begin{proposition}
The monoid $F[T]$ admits a regular language of unique representatives
over any finite alphabet representing a generating set.
\end{proposition}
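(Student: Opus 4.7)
The plan is to construct explicitly, for any finite alphabet $A$ representing a generating set for $F[T]$, a regular language of unique representatives, by exploiting the rigid action of $F$ on $T$ and the bijection $\psi$.

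First I would partition $A = A_F \cup A_T$ according to whether each letter represents an element of $F$ or of $T$. Using the multiplicative law of $F[T]$---a product of generators containing any $A_T$-letter lies in $T$ and equals $\elt{a_\ell}\cdot\elt{a_{\ell+1}\cdots a_k}$, where $a_\ell$ is the last $A_T$-letter---I would show that $A$ necessarily contains particular letters. Because $p_1$ is a source of the action graph on $T$ (nothing acts nontrivially to land on $p_1$), some $\hat{p}_1 \in A_T$ must satisfy $\elt{\hat{p}_1} = p_1$. Because $x$ and $y$ are atoms of the free monoid $F$, some $\hat{x}, \hat{y} \in A_F$ must satisfy $\elt{\hat{x}} = x$ and $\elt{\hat{y}} = y$.

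Next, using only these three identified letters of $A$ (any other letters simply remain unused in canonical forms), I would define $L \subseteq \{\hat{p}_1, \hat{x}, \hat{y}\}^*$ as the union of the five regular pieces
\begin{align*}
L_F &= \{\hat{x}, \hat{y}\}^*, \\
L_P &= \hat{p}_1 \hat{x}^*, \\
L_{Q \cup R} &= \hat{p}_1 \hat{x}^* \hat{y}, \\
L_{F' \setminus R} &= \hat{p}_1 \hat{y} \{w \in \{\hat{x}, \hat{y}\}^* : |w|_{\hat{y}} \text{ is odd}\}, \\
L_\Omega &= \{\hat{p}_1 \hat{x} \hat{x} \hat{y} \hat{x}\}.
\end{align*}
The first piece gives unique representatives for $F$ and the second for $P$. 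The third is the central device: $\hat{p}_1 \hat{x}^n \hat{y}$ represents $q_{n+1}$ when $n+1$ is not a power of $2$ and represents $j\psi$ when $n+1 = 2^j$, so one regular pattern simultaneously provides unique representatives for all of $Q \cup R$. The fourth piece represents each remaining $F'$-element via the path through $e' = 0\psi$, and the fifth represents $\Omega$ (via $q_3 \cdot x = \Omega$).

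To finish, I would verify regularity (immediate from the description), pairwise disjointness of the five pieces by inspecting initial and final letter patterns, and bijectivity of the map $L \to F[T]$. The only delicate point in disjointness is that $\hat{p}_1 \hat{y}$ already lies in $L_{Q \cup R}$ as the representative of $e' = 0\psi \in R$, which is why the odd-$\hat{y}$-count condition in $L_{F' \setminus R}$ excludes the empty tail. The conceptual heart of the argument---and the reason regularity is achievable at all---is that the non-regular predicate ``$n+1$ is a power of $2$'', which is precisely the obstruction that will subsequently be shown to prevent $F[T]$ from being Markov, is circumvented here by \emph{refusing} to distinguish powers of $2$ at the syntactic level, instead letting the single regular pattern $\hat{p}_1 \hat{x}^* \hat{y}$ serve as the canonical representative for two different kinds of elements.
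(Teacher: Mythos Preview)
Your argument is correct. The indecomposability claims for $p_1$, $x$, and $y$ hold, the five pieces do partition $F[T]$ as stated (in particular $\hat{p}_1\hat{y}$ represents $0\psi$, which is the empty word of $F'$, so acting by further $\hat{x},\hat{y}$'s traces out $F'$ exactly), and the disjointness and regularity checks go through.

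The paper takes a different route: it first proves a general change-of-generators lemma (\fullref{Proposition}{prop:markovchangegen}) showing that the existence of a regular language of unique representatives over \emph{some} alphabet implies existence over \emph{every} alphabet, and then works over a single convenient alphabet $\{a,b,c,d,e,f\}$ that includes letters for $x'$, $y'$, and $\Omega$ directly. With those extra letters available, the language is simply
\[
\{a,b\}^* \cup ea^* \cup ea^*b \cup (\{c,d\}^+ - R\rho) \cup \{f\},
\]
so $F'\setminus R$ and $\Omega$ are handled by dedicated letters rather than by paths through the action graph. Your approach avoids the change-of-generators machinery entirely by exploiting the fact that $p_1$, $x$, $y$ are forced into any generating set and that they alone suffice; this is more self-contained and uses a strictly smaller generating set. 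The paper's approach is more modular, cleanly separating the ``some alphabet'' construction from the ``all alphabets'' extension, and the general lemma it relies on is reused elsewhere.
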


\begin{proof}
By \fullref{Proposition}{prop:markovchangegen}, it suffices to prove
that $F[T]$ admits a regular language of unique representatives over
some particular finite alphabet representing a generating set.

Let $A = \{a,b,c,d,e,f\}$, where $\elt{a} = x$, $\elt{b} = y$, $\elt{c}
= x'$, $\elt{d} = y'$, $\elt{e} = p_1$, and $\elt{f} = \Omega$. Let $\rho : F' \to A^+$
be the bijection extending $x' \mapsto c$ and $y' \mapsto d$. Let
\[
L = \{a,b\}^* \cup ea^* \cup ea^*b \cup (\{c,d\}^+ - R\rho) \cup \{f\}.
\]
Then $L$ maps bijectively onto $F[T]$. In particular, the subset
$\{a,b\}^*$ maps bijectively onto $F$, the subset $ea^*$ maps
bijectively onto $\{p_i : i \in \nset\}$, the subset $ea^*b$ maps
bijectively onto $\{q_i : i \in \nset\} \cup R$, and the subset
$\{c,d\}^+ - R\rho$ maps bijectively onto $F' - R$. So $L \subseteq
A^*$ is a regular language of unique representatives for $F[T]$. [Note
  that $L$ is not prefix-closed, since it does not contain words from
  $R\rho$ but does contain all words in $(Ry')\rho = (R\rho)d$.]
\end{proof}

\begin{proposition}
The monoid $F[T]$ is not Markov.
\end{proposition}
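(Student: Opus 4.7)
I'd assume for contradiction that $L \subseteq A^*$ is a prefix-closed regular language of unique representatives for $F[T]$ over some generating alphabet $A$, and let $\fsa{A}$ be its minimal deterministic finite automaton. First, partition $A = A_F \sqcup A_T$ into generators representing elements of $F$ and of $T$, respectively; because $s \cdot t = t$ whenever $t \in T, s \in F[T]$ (right-absorption in $F[T]$), a word $u \in A^*$ represents an element of $T$ iff $u$ contains at least one $A_T$-letter.

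The central tool is a lemma: at any state $s$ of $\fsa{A}$ supporting two distinct representatives $\pi_{t_1}, \pi_{t_2}$ of distinct $T$-elements, the extension language satisfies $E(s) \subseteq A_F^*$. Indeed, any $z \in E(s)$ with $\elt{z} \in T$ would put $\pi_{t_1}z, \pi_{t_2}z$ both in $L$, both representing $\elt{z}$ by right-absorption, contradicting uniqueness.

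I would then combine this with pigeonhole on the representatives $\pi_{q_i}$. Since $Q$ is infinite, some state $s$ has infinitely many $\pi_{q_i}$'s, whence $E(s) \subseteq A_F^*$ by the lemma. Furthermore, for any $z \in E(s) \cap A_F^+$ and $\pi_{q_i}$ at $s$, the word $\pi_{q_i}z \in L$ represents $q_i \cdot \elt{z} = \Omega$, so $\pi_{q_i}z = \pi_\Omega$; this can hold for at most finitely many $i$ (since $\pi_\Omega$ has only finitely many prefixes), so with infinitely many $\pi_{q_i}$'s at $s$ we must have $E(s) \cap A_F^+ = \emptyset$, making $s$ a sink. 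As infinitely many distinct words still terminate at this sink, $\fsa{A}$ must have a cycle $c$ along some path into $s$; write $\pi_{q_i} = \alpha\beta$ with $\alpha$ reaching the state $r$ at which $c$ loops. A case analysis on the position of $r$ and right-absorption now produces a contradiction: if $r$ is pre-$T$, then $\elt{c} \in F$ and $\beta$ must contain an $A_T$-letter (since $q_i \in T$), so $\elt{\alpha c\beta} = \elt{\beta} = \elt{\alpha\beta} = q_i$ by right-absorption, giving two distinct representatives $\alpha\beta$ and $\alpha c \beta$ of $q_i$ in $L$; an analogous right-absorption argument disposes of post-$T$ cycles where $\beta$ eventually uses an $A_T$-letter or $r$ carries $Q$- or $\Omega$-type representatives.

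The main obstacle is the residual case — a cycle at a post-$T$ state whose representatives are all in $P \cup F'$ and whose continuation $\beta$ lies entirely in $A_F^*$ — where simple right-absorption does not close the argument. I expect this case to be handled by exploiting the specific arithmetic of the construction of $\psi$: the bound $|j\psi| < 2^j$ together with the fact that $\{2^j : j \geq 0\}$ is not eventually periodic in $\nset$ means no finite automaton can consistently distinguish the indices $i \in \{2^j\}$ (where $p_i \cdot y = w_j \in F'$) from the indices $i \notin \{2^j\}$ (where $p_i \cdot y = q_i \in Q$), so any coherent assignment of representatives $\pi_{q_i}$ and $\pi_{w_j}$ propagates this obstruction into an inconsistency in the eventually-periodic state sequence $i \mapsto \sigma(\pi_{p_i})$ produced by $\fsa{A}$.
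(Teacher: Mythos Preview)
Your opening moves are sound: the right-absorption lemma (any state carrying two distinct $T$-representatives has extension language contained in $A_F^*$) is correct, and the argument that the state $s$ collecting infinitely many $\pi_{q_i}$'s must be a sink is valid. The case analysis on the pumped decomposition $\pi_{q_i} = \alpha c \beta$ also correctly disposes of all cases except the one you call residual. But one small correction first: the case $\elt{\alpha} \in F'$ is \emph{not} residual --- if $\elt{\alpha} \in F'$ and $c,\beta \in A_F^*$, then $\elt{\alpha c\beta} \in F'$, which already contradicts $\elt{\alpha c\beta} = q_i \in Q$. So the only genuinely open case is $\elt{\alpha} \in P$, $\elt{c} = x^\ell$ with $\ell \geq 1$, and $\elt{\beta} = x^{d}y$ for some $d$.

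The gap is that in this residual case no contradiction follows from your setup. You obtain words $\alpha c^k\beta \in L$ representing $p_{C+k\ell}\cdot y$ for a constant $C$; these are distinct elements (either $q_{C+k\ell}$ or some $j\psi \in R$ according as $C+k\ell$ is a power of $2$), so uniqueness is not violated, and prefix-closure gives nothing further. Your appeal to ``aperiodicity of $\{2^j\}$'' and to the bound $|j\psi| < 2^j$ is not an argument: the automaton is not being asked to distinguish powers of $2$, it merely reads words, and eventual periodicity of $i \mapsto \text{state}(\pi_{p_i})$ is something you have not established (and indeed your argument only pins down $\pi_{p_{m+k\ell}}$ along one arithmetic progression).

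The paper closes this gap by a different mechanism that your sketch does not touch: it contrasts how elements of $R \subseteq F'$ and of $F' - R$ must be represented. It first proves that any $A_T$-letter in a word of $L$ occurs within the first $n$ positions (your right-absorption lemma applied at the word level via pumping). Then it shows that sufficiently long $w \in R$ must be represented as $vc$ with $\elt{v} \in P$ and $\elt{c} = x^\beta y$ (entering $F'$ only on the final letter), whereas sufficiently long $w \in F' - R$ must be represented as $uv$ with $\elt{u} \in F'$ of bounded length and $v \in A_F^*$ (entering $F'$ early and then appending). The contradiction comes from prefix-closure: take a long $w \in R$, form $w(x')^{2k}y' \in F' - R$, look at its representative, and strip off a short suffix to obtain a representative of $w(x')^j \in R$ that ends in a letter from $A_x$ --- contradicting the required form for $R$-representatives. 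This interplay between $R$ and $F' - R$ inside $F'$ is the whole point of the construction, and your argument, by focusing on $Q$, never engages with it.
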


\begin{proof}
Suppose, with the aim of obtaining a contradiction, that $F[T]$ admits
a Markov language $L$ over some alphabet $A$. 

Informally, the strategy is to reach a contradiction by proving the
following:
\begin{enumerate}

\item Sufficiently long elements of $R$ must have representatives in
  $L$ that label paths that run through $P$ for most of their length
  (excepting a short prefix) and enter $R \subseteq F'$ on their last
  letter. (\fullref{Lemma}{lem:rreps}.)

\item Sufficiently long elements of $F' - R$ have representatives in
  $L$ that label paths that run through $F'$ for most of their length
  (excepting a short prefix). (\fullref{Lemma}{lem:nonrreps}.)

\item Taking a suitable prefix of a representative of an element of
  $F'-R$ yields a representative of an element of $R$ that is not of
  the form described in step~1. (Conclusion of proof.)

\end{enumerate}

As as preliminary, define several subalphabets of $A$ and several
constants that will be used later to clarify what `sufficiently long'
means in the plan above. Let
\begin{align*}
A_P &= \{a \in A : \elt{a} \in P\}, \\
A_Q &= \{a \in A : \elt{a} \in Q\}, \\
A_{F'} &= \{a \in A : \elt{a} \in F'\}, \\
A_F &= \{a \in A : \elt{a} \in F\}, \\
A_x &= \{a \in A : \elt{a} \in x^+\}, \\
A_\Omega &= \{a \in A : \elt{a} = \Omega\}; 
\end{align*}
notice that $A$ is the disjoint union of $A_P$, $A_Q$, $A_{F'}$, $A_F$, and $A_\Omega$, and that $A_x \subseteq A_F$. Let
\begin{align*}
m_1 &= |u|, \text{where $u$ is the unique representative in $L$ of $\Omega$}, \\
m_2 &= \max\{i : p_i \in \elt{A_P}\}, \\
m_3 &= \max\{i : q_i \in \elt{A_Q}\}, \\
m_4 &= \max\{|\elt{a}| : a \in A_{F'}\}, \\
m &= \max\{m_1,m_2,m_3,m_4\}.
\end{align*}
Let $k = \max\{|\elt{a}| : a \in A_F\}$.

Let $\fsa{A}$ be a deterministic finite automaton recognizing
$L$. Consider the set of labels on simple loops in $\fsa{A}$. Let $V$
be the set of such labels that lie in $A_x^*$. Let $n$ be a constant
that is a multiple of all of the lengths of the elements of $V$ and
that also exceeds the number of states in $\fsa{A}$.

\begin{lemma}
\label{lem:shortprefix}
Let $uav \in L$, where $a \in A - A_F$. Then $|u| < n$. That is, any
letter from $A_P \cup A_Q \cup A_{F'} \cup A_\Omega$ in a word in $L$
must lie in the first $n$ letters, and hence $L \subseteq A^{\leq n}A_F^*$.
\end{lemma}

\begin{proof}
Suppose for \textit{reductio ad absurdum} that $uav \in L$ is as in
the hypothesis but that $|u| > n$. Then by the pumping lemma, $u$
factorizes as $u'u''u'''$ such that $u'(u'')^\alpha u'''av \in L$ for
all $\alpha \in \nset \cup \{0\}$. Since $\elt{a} \in T$, it follows
from the definition of multiplication in $F[T]$ that
\[
\elt{u'(u'')^\alpha u'''av} = \elt{av},
\]
for every $\alpha \in \nset \cup \{0\}$, which contradicts the
uniqueness of representatives in $L$. Hence $|u| \leq n$.
\end{proof}

\begin{lemma}
\label{lem:rreps}
The representative in $L$ of every $w \in R \subseteq F'$ with $|w| > m+n+k+kn$ has
the form $vc$, where $v \in A^*$, $c \in A_F-A_x$, $\elt{v} \in P$ and $\elt{c} = x^\beta y$ for
some $\beta < k$.
\end{lemma}

\begin{proof}
Let $j$ be such that $j\psi = w$. Since $|w| > m+n+k+kn$, it follows
that $2^j > |w| > m+n$ and hence $2^j - n > m$. It also follows that
$2^j > n+kn \geq 2n$, and so $n < 2^{j-1}$. Hence $2^j - n >
2^{j-1}$. Thus $2^j - n$ is not a power of $2$ and so there is an
element $q_{2^j-n} \in Q$.

Let $t$ be the representative in $L$ of $q_{2^j-n}$. Since $2^j - n >
m$, the rightmost letter $a$ from $A - A_F$ in the word $t$ cannot be
such that $\elt{a} = q_{2^j - n}$ by the definition of $m$; therefore
$a$ must lie in $A_P$. By \fullref{Lemma}{lem:shortprefix}, $t$
factorizes as $uas$, where $|u| < n$ and $s \in A_F^*$. Let $s =
s'cs''$, where $s' \in A_x^*$ and $c \in A_F - A_x$. (Such a letter $c$
must exist, otherwise $\elt{ubs} \in P$.) Now, $\elt{uas'c} \in
Q$. Since the action of $F$ on any element of $Q$ leads to the sink
element $\Omega$, it follows that $s''$ is the empty word. Hence $t =
uas'c$.

Let $\elt{c} = x^\beta yz$, where $z \in \{x,y\}^*$. Then
$\elt{uas'}x^\beta y \in Q$, and so $z = \emptyword$ since otherwise
$\elt{uas'}x^\beta yz = \Omega$. Since $|\elt{c}| \leq k$, it follows
\textit{a fortiori} that $\beta < k$.

Furthermore, since $\elt{uas'c} = q_{2^j-n}$, it follows that
$\elt{uas'} = p_{2^j - n -\beta}$. Hence, since $\elt{ua} = \elt{a} =
p_{m'}$ for some $m' \leq m$, it follows that
\[
|\elt{s'}| = 2^j-n-\beta-m' \geq 2^j -n -k-m > kn.
\]
Thus $|s'| > n$ since each letter of $s'$ represents an element of $F$
whose length is at most $k$.

Thus by the pumping lemma $s'$ factorizes as $v'v''v'''$, where
$|\elt{v''}|$ divides $n$ (by the definition of $n$) and
$v'(v'')^\alpha v''' \in L$ for all $\alpha \in \nset \cup \{0\}$. Set
$\alpha = n/|\elt{v''}|+1$. Then $\elt{uav'(v'')^\alpha v'''} =
p_{2^j-\beta}$. Thus
\[
\elt{uav'(v'')^\alpha v'''c} = p_{2^j-\beta}x^\beta y = p_{2^j}y = p_{2^j}\psi = w.
\]
Set $v = uav'(v'')^\alpha v'''$ to see that the representative $t$ of
$w$ has the form $vc$.
\end{proof}

\begin{lemma}
\label{lem:nonrreps}
Let $w \in F' - R$. Then the representative in $L$ of $w$ factorizes
as $uv$ where $\elt{u} \in F'$ with $|\elt{u}| < m+k+kn$ and $v \in
A_F^*$.
\end{lemma}

\begin{proof}
Let $w$ be in the hypothesis and let $t$ be its representative in
$L$. Since $t$ cannot lie in $A_F^*$, it contains some letter from
$A_P \cup A_Q \cup A_{F'} \cup A_\Omega$. The rightmost such letter
cannot lie in $A_Q \cup A_\Omega$, since this would force $\elt{t}$ to
lie in $Q \cup \{\Omega\}$. So the rightmost such letter is either
from $A_P$ or $A_{F'}$.

If the rightmost such letter is from $A_{F'}$, then by
\fullref{Lemma}{lem:shortprefix}, $t = u'av$, where $a \in A_{F'}$,
$|u'| < n$, $v \in A_F^*$. Set $u = u'a$. Then $\elt{u} = \elt{a}$ and
so $|\elt{u}| < m < m+nk$ and there is nothing more to prove.

So suppose the rightmost such letter is from $A_P$. Then by
\fullref{Lemma}{lem:shortprefix}, $t = t'bt''$, where $b \in A_P$,
$|t'| < n$, $t'' \in A_F^*$. Then $w = \elt{t} = \elt{bt''}$. Now, if
$t'' \in A_x^*$, then $\elt{bt''} \in P$ by the definition of the
action. So $t''$ contains some letter from $A_F - A_x$. Let $t'' =
scv$, where this distinguished letter $c$ is the leftmost letter of
$t''$ that is from $A_F - A_x$, so that $s \in A_x^*$. Then $\elt{bs} \in
P$ and $\elt{bsc} \in F'$ since the alternative $\elt{bsc} \in Q \cup
\{\Omega\}$ cannot happen since this set is closed under the action of
$v$.

Thus far $t$ has been factorized as $t'bscv$. The next step is to show
that $|s| < n$.  Suppose for \textit{reductio ad absurdum} that $|s|
\geq n$. Then $s$ factorizes as $s's''s'''$, where
$t'bs'(s'')^\alpha s'''cv \in L$ for all $\alpha \in \nset \cup
\{0\}$. Now, since $s = s's''s''' \in A_x^*$, the elements
$\elt{t'bs'(s'')^\alpha s'''}$ are a sequence of elements
$p_{i_\alpha}$ whose indices $i_\alpha$ form a linear progression. But
the indices of the elements $p_i \in P$ such that $p_i \cdot \elt{c}
\in F'$ are the terms of an exponential function. So there are infinitely many
$\alpha \in \nset \cup \{0\}$ such that $\elt{t'bs'(s'')^\alpha s'''c} = q_j \in Q \cup \{\Omega\}$.

Reasoning as in the third paragraph of the proof of
\fullref{Lemma}{lem:rreps}, $c = x^\beta y$. Now, if $v \neq
\emptyword$, then $\elt{t'bs'(s'')^\alpha s'''cv} = \Omega$ for infinitely many
$\alpha \in \nset \cup \{0\}$, which contradicts uniqueness of
representatives. If, on the other hand, $v = \emptyword$, then $w =
\elt{t} = \elt{t'bsc} = j\psi$ for some $j$ since $\elt{t'bs} \in P$,
$\elt{c} = x^\beta y$, and $\elt{t'bsc} \in F'$. So $w = j\psi \in R$,
which contradicts the hypothesis of the lemma. Hence $|s| < n$.

Therefore $|\elt{s}| < kn$ since each letter
of $s$ represents a word in $A_x^*$ of length at most $k$.

Now, $\elt{b} = p_{m'}$, where $m' < m$ by the definition of
$m$. Hence $\elt{bs} = p_{m'}\elt{s} = p_h$ for some $h < m+kn$ by the
definition of the action of $x$ on the $p_i$. Suppose $c = x^\beta y
z$ for some and $z \in \{x,y\}^*$. Then $\beta + |z| < k$. Since $\elt{t'bsc} \in
F'$, it follows that $h+\beta = 2^j$ for some $j \in \nset \cup \{0\}$. Hence $\elt{t'bsc}
= wz$, where $w \in R$ with $|w| < 2^j$. Now,
\[
|\elt{t'bsc}| = |wz| = |w| + |z| < |j\psi| + |z| < 2^j + |z| = h+\beta + |z| < h +k < m+k +kn.
\]
Let $u = t'bsc$. Then $t = uv$ with $|\elt{u}| < m+k+kn$.
\end{proof}

Choose $w \in R$ with $|w| > m+n+k+kn$. Then $|w|_{y'}$ is even and so
$|w(x')^{2k}y'|_{y'}$ is odd, so that $w(x')^{2k}y' \notin R$. Let $t$ be the
representative in $L$ of $w(x')^{2k}y'$. Then by
\fullref{Lemma}{lem:nonrreps}, $t$ factorizes as $uv$, where the $v$
is the longest suffix lying in $A_F^*$ and $\elt{u} \in F'$ with
$|\elt{u}| < m+k+kn$

In particular, $|w(x')^{2k}y'| > m+3k+kn$. Since $|\elt{u}| < m+k+kn$,
it follows that $|\elt{v}| > 2k$. Since each letter of $v$ represents
an element of $F$ of length at most $k$, the word $v$ has length at
least $2$. So let $v = v'ab$, where $a,b \in A_F$.  Since
$|\elt{a}|,|\elt{b}| < k$, $\elt{t} = \elt{uv} = w(x')^{2k}y'$ and
$\elt{u} \in F'$, it follows from the action of $F$ on $F' \subseteq
T$ that $\elt{uv'} = w(x')^\alpha$ for some $\alpha \in
\{1,\ldots,2k\}$, $\elt{a} = x^\beta$ (so that $a \in A_x$), and $b
\in A_F - A_x$.

Let $t' = uv'a$. Then $\elt{t'} = w(x')^{\alpha+\beta}$. By
prefix-closure, $t' \in L$. Observe that $t'$ ends with $a \in A_x$.

Now, the word $w(x')^{\alpha+\beta}$ lies in $R$ since $|w|_{y'} =
|w(x')^{\alpha+\beta}|_{y'}$ is even. So by \fullref{Lemma}{lem:rreps}, its
unique representative $t'$ must factorize as $sc$, where $\elt{c} =
x^\beta y$, so that $c \in A_F - A_x$. This contradicts the fact that
$\elt{t'}$ ends with a letter from $A_x$.

Thus $F[T]$ does not admit a Markov language.
\end{proof}

\section{Rewriting systems}
\label{sec:rs}

Confluent noetherian rewriting systems form a natural source of
examples of Markov semigroups. The following result is easily noticed,
but will prove very useful:

\begin{proposition}
\label{prop:rsmarkov}
Let $(A,\rel{R})$ be a confluent noetherian rewriting system with the
set of left-hand sides of rewriting rules in $\rel{R}$ being
regular. Then the monoid presented by $\pres{A}{\rel{R}}$ is Markov,
and its language of normal forms is a Markov language. Furthermore, if
$(A,\rel{R})$ is non-length-increasing, then the language of normal
forms is a robust Markov language for the monoid.
\end{proposition}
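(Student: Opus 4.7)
The plan is to identify the language of normal forms explicitly as a regular language, observe that it is prefix-closed by the very nature of reduction, and use confluence plus noetherianity to pin down uniqueness of representatives. The robustness claim will follow because non-length-increasing reduction cannot produce a normal form longer than the input.

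First I would let $\Lambda$ denote the regular set of left-hand sides of rules in $\rel{R}$ and note that the set $\NF$ of irreducible words over $A$ is precisely $A^* \setminus A^* \Lambda A^*$, since a word is reducible exactly when it contains some $\ell \in \Lambda$ as a contiguous subword. Because the regular languages are closed under concatenation with $A^*$ and under complementation in $A^*$, the language $\NF$ is regular. Prefix-closure is immediate: if $uv \in \NF$ and some $\ell \in \Lambda$ were a subword of $u$, then $\ell$ would be a subword of $uv$, contradicting the irreducibility of $uv$. Confluence and noetherianity of $(A,\rel{R})$ give, via Theorem~1.1.12 of Book \& Otto already quoted in \fullref{\S}{sec:preliminaries}, that every element of the monoid $M$ presented by $\pres{A}{\rel{R}}$ has a unique representative in $\NF$. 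So $\NF$ satisfies the definition of a monoid Markov language, and $M$ is Markov.

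For the robustness claim I would assume $(A,\rel{R})$ is non-length-increasing and argue that normal forms are already length-minimal. Fix $x \in M$ and let $w \in A^*$ be a shortest word representing $x$, so $|w| = \lambda_A(x)$; let $\bar{w} \in \NF$ be the unique normal form with $w \derives \bar{w}$, which by confluence is the $\NF$-representative of $x$. Since every single step $u \imderives u'$ satisfies $|u'| \leq |u|$, a straightforward induction on the length of a reduction sequence yields $|\bar{w}| \leq |w| = \lambda_A(x)$, and minimality of $\lambda_A(x)$ forces equality. Thus every word in $\NF$ has length equal to the natural length of the element it represents, and $\NF$ is a robust monoid Markov language for $M$.

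There is no real obstacle here: the proof merely combines the standard existence-and-uniqueness theorem for confluent noetherian rewriting systems with the closure properties of the regular languages. The only point worth flagging is that the argument uses regularity of $\Lambda$ rather than finiteness of $\rel{R}$, which is exactly what the hypothesis provides.
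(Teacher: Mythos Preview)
Your proof is correct and follows the same approach as the paper: identify the normal-form language as $A^* \setminus A^*\Lambda A^*$, observe it is regular and prefix-closed, appeal to confluence and noetherianity for unique representatives, and use the non-length-increasing hypothesis for robustness. You supply more detail than the paper's very terse argument (in particular you spell out the prefix-closure and the length-minimality arguments explicitly), but the underlying idea is identical.
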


\begin{proof}
The language $L = A^* - \{\ell : (\ell,r) \in \rel{R}\}$, which is the
language of normal forms of $(A,\rel{R})$, is regular, prefix-closed,
and maps bijectively onto the monoid presented by
$\pres{A}{\rel{R}}$. For the final observation, notice that if
$(A,\rel{R})$ is non-length-increasing, then the language of normal
forms consists of minimal-length representatives.
\end{proof}

It is worth emphasizing that \fullref{Proposition}{prop:rsmarkov} says
that being Markov is a necessary condition for a semigroup to be
presented by a confluent noetherian rewriting system, although it is
probably not as useful as other necessary conditions such as finite
derivation type \cite{squier_fdt}, which are independent of the choice
of generating set.

However, the following example shows that a semigroup presented by a
finite confluent noetherian non-length-increasing rewriting system can
admit a robust Markov language that looks very different from its
language of normal forms:

\begin{example}
Let $A = \{a,b\}$ and $\rel{R} = \{(a^2,ba),(b^2,ab)\}$. Then
$(A,\rel{R})$ is confluent and noetherian. Let $L$ be its language of
normal forms; this is a robust Markov language by
\fullref{Proposition}{prop:rsmarkov}. Then $L$ is the language of
words over $A$ that do contain neither two consecutive letters $a$ nor
two consecutive letters $b$; thus $L$ is the language of alternating
products of letters $a$ and $b$:
\begin{align*}
L &= (A^* - A^*aaA^*) - A^*bbA^* \\
&= (ab)^* \cup (ab)^*a \cup (ba)^* \cup (ba)^*b.
\end{align*}
Let $M$ be the monoid presented by $\pres{A}{\rel{R}}$. Let
\[
K = ab^* \cup ba^*.
\]
The aim is to show that $K$ is also a Markov language for $M$. Notice
first that $K$ is prefix-closed and regular and so it remains to show
that it consists of unique minimal-length representatives for $M$.

Notice that for any $\alpha \in \nset \cup \{0\}$,
\[
\elt{(ab)^\alpha} = \elt{ab(ab)^{\alpha-1}} = \elt{ab(b^2)^{\alpha - 1}} = \elt{ab^{2\alpha - 1}}
\]
and
\[
\elt{(ab)^\alpha a} = \elt{ab(ab)^{\alpha-1}a} = \elt{bb(ab)^{\alpha - 1}a} = \elt{b(ba)^\alpha} = \elt{b(a^2)^\alpha} = \elt{ba^{2\alpha}}.
\]
Parallel reasoning shows that $\elt{(ba)^\alpha} =
\elt{ba^{2\alpha-1}}$ and $\elt{(ba)^\alpha b} =
\elt{ab^{2\alpha}}$. Thus every word in $L$ represents the same
element as exactly one element of $K$ and vice versa. Furthermore, the
lengths of the corresponding words in $L$ and $K$ are the same. Hence,
since $L$ is a robust Markov language for $M$ by
\fullref{Proposition}{prop:rsmarkov}, $K$ is also a robust Markov
language for $M$.
\end{example}

\begin{question}
Is every Markov semigroup presented by a confluent noetherian
rewriting system where the language of left-hand sides of rewriting
rules is regular? (That is, where the language of \emph{all} left-hand sides
is regular: \fullref{Example}{ex:wpunsolvable} below shows that the
language of left-hand sides of rules with a particular right-hand side
may be irregular.)
\end{question}

\section{Markov, robustly Markov, and strongly Markov semigroups}

The example in \fullref{\S}{sec:regnotmarkov} consists of a
non-Markov monoid that admitted a regular language of unique
representatives over any alphabet representing a generating set. The
present section gives an example of a monoid that is Markov but not
robustly Markov (\fullref{Example}{ex:markovnotrobustlymarkov}) and an
example of a monoid that is robustly Markov but not strongly Markov
(\fullref{Example}{ex:robustlymarkovnotstronglymarkov}). These three
examples together show that the classes of Markov, robustly Markov,
and strongly Markov semigroups are distinct.

\begin{example}
\label{ex:markovnotrobustlymarkov}
Let
\begin{align*}
P &= \{p_i : i \in \nset\}, \\
Q &= \{q_i : i \in \nset \land \neg(\exists j \in \nset)(i = 2^j)\}, \\
R &= \{r_i : i \in \nset\}, \\
S &= \{s_i : i \in \nset\}, \\
T &= P \cup Q \cup R \cup S \cup \{\Omega\}. 
\end{align*}
Let $F$ be a free monoid with basis
$X = \{x,y\}$. Define an action of $X$ on $T$ as follows
\begin{align*}
p_i \cdot x &= p_{i+1}, &
p_i \cdot y &= \begin{cases}
q_i & \text{if $i \neq 2^j$ for any $j \in \nset \cup \{0\}$}, \\
s_j & \text{if $i = 2^j$ for some $j \in \nset \cup \{0\}$}, 
\end{cases} \\
q_i \cdot x &= \Omega, &
q_i \cdot y &= \Omega, \\
r_i \cdot x &= r_{i+1}, &
r_i \cdot y &= s_i, \\
s_i \cdot x &= \Omega, &
s_i \cdot y &= \Omega, \\
\Omega \cdot x &= \Omega, &
\Omega \cdot y &= \Omega. 
\end{align*}
Since $F$ is free on $X$, this action extends to a unique action of
$F$ on $T$. \fullref{Figure}{fig:nonrobustmarkovaction} shows the
graph of the action of $X$ on
$T$. \fullref{Propositions}{prop:exmarkov} and
\ref{prop:exnotrobustlymarkov} below show that $F[T]$ is strongly Markov but
not robustly Markov.
\end{example}

\begin{figure}[t]
\centerline{\includegraphics{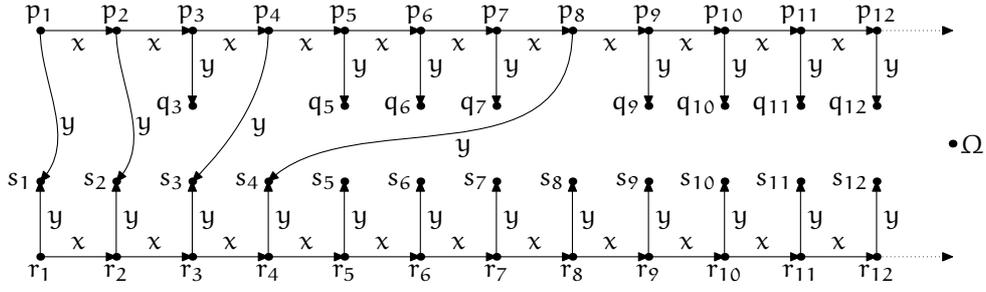}}
\caption{Part of the graph of the action of $X$ on $T$. Edges
  which lead to $\Omega$ are not shown}
\label{fig:nonrobustmarkovaction}
\end{figure}

\begin{proposition}
\label{prop:exmarkov}
The monoid $F[T]$ is Markov.
\end{proposition}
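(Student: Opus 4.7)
The plan is to exhibit a prefix-closed regular language of unique representatives for $F[T]$ over a monoid generating set; by \fullref{Proposition}{prop:markovsgmon} this will suffice to conclude that $F[T]$ is Markov.

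I would take the alphabet $A = \{a,b,c,d,e\}$ with $\elt{a} = x$, $\elt{b} = y$, $\elt{c} = p_1$, $\elt{d} = r_1$, and $\elt{e} = \Omega$. This is a monoid generating set: $\{a,b\}$ generates the free submonoid $F$; the chain $p_1,p_2,\ldots$ arises as $c, ca, ca^2,\ldots$; each $q_i \in Q$ and each $s_j \in S$ is $\elt{ca^{i-1}b}$ according to whether $i$ is (or is not) a power of $2$; the chain $r_1,r_2,\ldots$ arises as $d, da, da^2,\ldots$; and $\Omega$ is just $e$. I would then propose the language
\[
L \;=\; \{a,b\}^* \;\cup\; ca^* \;\cup\; ca^*b \;\cup\; da^* \;\cup\; \{e\},
\]
which is manifestly regular. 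Prefix-closure is routine to verify branch by branch: prefixes of words in $\{a,b\}^*$ remain in $\{a,b\}^*$; every proper prefix of a word in $ca^*b$ lies in $\{\emptyword\} \cup ca^*$; prefixes of $da^n$ lie in $\{\emptyword\} \cup da^*$; and the only non-empty prefix of $e$ is $e$ itself.

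The remaining task is to check that $L$ maps bijectively onto $F[T]$. The five summands of $L$ land in the pairwise disjoint sectors $F$, $P$, $Q \cup S$, $R$, and $\{\Omega\}$, respectively. The bijections $\{a,b\}^* \to F$ (since $F$ is free on $\{x,y\}$), $ca^n \mapsto p_{n+1}$, $da^n \mapsto r_{n+1}$, and $e \mapsto \Omega$ are immediate. The step requiring the most care, and which I expect to be the only real content of the verification, is the bijection $ca^*b \to Q \cup S$: the word $ca^nb$ represents $q_{n+1}$ when $n+1$ is not a power of $2$ and $s_j$ when $n+1 = 2^j$, and one reads off from the definitions of $Q$ and of the action $p_i\cdot y$ that every element of $Q \cup S$ is hit exactly once. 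Since $\emptyword \in \{a,b\}^*$ represents the identity of $F[T]$, this makes $L$ a monoid Markov language; the reason a single regular branch succeeds here, by contrast with the pathological example of \fullref{\S}{sec:regnotmarkov}, is that the targets $s_j$ of $p_{2^j}\cdot y$ are indexed directly by $j$ rather than lying inside an infinite free-monoid sector $F'$, so no exponential-spacing obstruction arises.
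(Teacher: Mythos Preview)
Your proposal is correct and takes essentially the same approach as the paper: the paper uses exactly the same alphabet $A=\{a,b,c,d,e\}$ with the same interpretations, and exactly the same language $\{a,b\}^* \cup ca^* \cup ca^*b \cup da^* \cup \{e\}$, with the same decomposition of the bijection into the sectors $F$, $P$, $Q\cup S$, $R$, and $\{\Omega\}$. Your write-up is somewhat more detailed in verifying prefix-closure and the $ca^*b \to Q\cup S$ bijection, but there is no substantive difference.
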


\begin{proof}
Let $A = \{a,b,c,d,e\}$ be an alphabet representing elements of $F[T]$ as follows:
\[
\elt{a} = x,\quad \elt{b} = y,\quad \elt{c} = p_1,\quad\elt{d} = r_1,\quad\elt{e} = \Omega.
\]
Let $K = \{a,b\}^* \cup ca^* \cup ca^*b \cup da^* \cup \{e\}$. Then $K$ is
prefix-closed, regular, and maps bijectively onto $F[T]$. In
particular, the subset $\{a,b\}^*$ maps bijectively onto $F$, the
subset $ca^*$ maps bijectively onto $P$, the subset $ca^*b$ maps
bijectively onto $Q \cup S$, and the subset $da^*$ maps bijectively
onto $R$. Thus $K$ is a Markov language for $F[T]$.
\end{proof}

\begin{proposition}
\label{prop:exnotrobustlymarkov}
The monoid $F[T]$ is not robustly Markov.
\end{proposition}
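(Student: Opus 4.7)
By \fullref{Theorem}{thm:robustrational}, it is enough to show that $\Sigma(F[T],A)$ is not rational for every finite alphabet $A$ representing a generating set of $F[T]$. Fix such an $A$, and partition it as $A=A_F\sqcup A_P\sqcup A_Q\sqcup A_R\sqcup A_S\sqcup A_\Omega$ according to whether the image of a letter lies in $F$, $P$, $Q$, $R$, $S$ or $\{\Omega\}$. In $F[T]$ the last letter from $A-A_F$ in a word absorbs everything strictly to its left (and leaves $F$-acting letters to its right acting as usual on its image), so every minimal-length representative of an element $t\in T$ has the form $aw$ with $a\in A-A_F$ and $w\in A_F^*$ and $\elt{a}\cdot\elt{w}=t$. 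Writing $\mu_A(f)$ for the shortest length of an $A_F^*$-word representing $f\in F$, and $I_P=\{i_0:p_{i_0}\in\elt{A_P}\}$, one obtains, for $i$ larger than a threshold depending on $A$,
\[
\lambda_A(p_i)=1+\min_{i_0\in I_P}\mu_A(x^{i-i_0})\quad\text{and}\quad \lambda_A(q_i)=1+\min_{i_0\in I_P}\mu_A(x^{i-i_0}y),
\]
with analogous expressions for $\lambda_A(r_i)$ using $I_R$, and for $\lambda_A(s_j)$ which is minimized over both the $P$-path and the $R$-path.

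My plan is to split $\Sigma(F[T],A)=F_A+P_A+Q_A+R_A+S_A+\Omega_A$ (each summand being the partial growth series for the corresponding subset of $F[T]$), show that every summand except $Q_A$ is rational, and then isolate inside $Q_A$ a lacunary subseries whose exponents grow exponentially. The rationality of $F_A$ is immediate (free monoid). For $P_A$, $R_A$, $S_A$ the argument is the same: in each case the indexing set is fully populated, and the classical coin-change behaviour of $\mu_A(x^n)$ makes the minimum-length function eventually quasi-periodic in the index, yielding a rational series (equivalently, an instance of rationality of growth of finitely generated commutative monoids, anticipating \fullref{\S}{sec:comm}; for $S_A$ one must also observe that the $P$-path is shorter than the $R$-path only for finitely many $j$). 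The same calculation shows that the formal series $\widetilde{Q}_A(x)=\sum_{i\geq 1}x^{\phi(i)}$, where $\phi(i)=1+\min_{i_0\in I_P}\mu_A(x^{i-i_0}y)$ is the length that a representative of the (possibly non-existent) element $q_i$ would have, is rational. Since $q_{2^j}$ does not exist in $F[T]$ for any $j$, whereas every other $q_i$ does,
\[
Q_A(x)=\widetilde{Q}_A(x)-\sum_{j\geq 1}x^{\phi(2^j)}.
\]

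It remains to show that $L(x):=\sum_{j\geq 1}x^{\phi(2^j)}$ is not rational; this forces $Q_A$, and hence $\Sigma(F[T],A)$, to be irrational, contradicting the assumption that $F[T]$ is robustly Markov over $A$. Setting $k=\max\{|\elt{a}|:a\in A_F\}$, one has $\mu_A(x^n)\geq n/k$ and hence $\phi(2^j)\geq 2^j/k-O(1)$, so the exponents in $L(x)$ satisfy $\phi(2^{j+1})/\phi(2^j)\to 2$. But $L(x)$ has $0$/$1$ coefficients: if it were rational, the coefficient sequence would be bounded and satisfy a linear recurrence, and so by Kronecker's theorem on roots of unity would be eventually periodic, forcing its support to be either eventually empty or a finite union of arithmetic progressions of positive density --- neither possible for the density-zero exponentially-lacunary support $\{\phi(2^j):j\geq 1\}$. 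I expect the main technical obstacle will be giving a clean rigorous proof that $P_A$, $R_A$, $S_A$ and $\widetilde{Q}_A$ are indeed rational, which requires careful accounting of $\mu_A(x^n)$ under a finite family of shifts and index offsets; once that is in hand, the lacunary conclusion is essentially automatic.
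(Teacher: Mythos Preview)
Your approach via growth series is genuinely different from the paper's. The paper argues directly with a hypothetical robust Markov language $L$: the representative in $L$ of $q_{2^i-n}$ (for large $i$) necessarily routes through $P$ and ends with a letter $c$ satisfying $\elt{c}=x^\beta y$; pumping its long $A_x^*$-segment by the right amount yields a word in $L$ representing $s_i$, and this word has length exceeding $(2^i-m-\beta)/k$. But $s_i$ also has a representative of length roughly $i$ via the $R$-chain, contradicting minimality. This is a short pumping contradiction that avoids all rationality bookkeeping; your route is heavier but, if completed, would yield the stronger conclusion that $\Sigma(F[T],A)$ is irrational for every $A$.

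There is one gap in your outline. You assert that the rationality of $F_A$ is ``immediate (free monoid)'', but rationality of the growth series of a free monoid with respect to an \emph{arbitrary} finite generating set is not obvious, and the paper nowhere establishes it. The clean repair is to argue conditionally rather than unconditionally: assume $F[T]$ admits a robust Markov language $L$ over $A$; then $L\cap A_F^{*}$ is a robust Markov language for $F$ over $A_F$ (it is regular, prefix-closed, maps bijectively onto $F$ since any $A$-representative of an element of $F$ must lie in $A_F^{*}$, and inherits length-minimality because $\lambda_A$ and $\lambda_{A_F}$ agree on $F$), whence $F_A$ is rational by \fullref{Theorem}{thm:robustrational}. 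With this adjustment your decomposition goes through: the coin-problem analysis makes $\phi(i)-i/c_{\max}$ eventually periodic, so $P_A$, $R_A$, $S_A$, $\widetilde Q_A$ are rational, and the exponentially lacunary support of $\sum_j x^{\phi(2^j)}$ forces irrationality exactly as you argue via eventual periodicity of bounded integer linear recurrences.
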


\begin{proof}
Suppose, with the aim of obtaining a contradiction, that $F[T]$ admits
a robust Markov language $L$ over some alphabet $A$. 

Define the following subalphabets of $A$:
\begin{align*}
A_P &= \{a \in A : \elt{a} \in P\}, \\
A_Q &= \{a \in A : \elt{a} \in Q\}, \\
A_R &= \{a \in A : \elt{a} \in R\}, \\
A_S &= \{a \in A : \elt{a} \in S\}, \\
A_F &= \{a \in A : \elt{a} \in F\}, \\
A_x &= \{a \in A : \elt{a} \in x^+\}, \\
A_\Omega &= \{a \in A : \elt{a} = \Omega\}; 
\end{align*}
notice that $A$ is the disjoint union of $A_P$, $A_Q$, $A_R$, $A_S$, $A_F$, and $A_\Omega$. Let
\begin{align*}
m_1 &= \max\{i : p_i \in \elt{A_P}\}, \\
m_2 &= \max\{i : q_i \in \elt{A_Q}\}, \\
m_3 &= \max\{i : r_i \in \elt{A_R}\}, \\
m_4 &= \max\{i : s_i \in \elt{A_S}\}, \\
m &= \max\{m_1,m_2,m_3,m_4\}.
\end{align*}
Let $k = \max\{|\elt{a}| : a \in A_F\}$.

Reasoning as in the proof of \fullref{Lemma}{lem:rreps}, one sees that
for $i$ sufficiently large, $s_i$ is represented by a word of the form
$vc$, where $v \in A^*$, $c \in A_F - A_x$, $\elt{v} \in P$, and
$\elt{c} = x^\beta y$ for some $\beta < k$.

Let $v = v'bv''$, where $v'' \in A_F$. Then $b \in A_P$ and so
$\elt{v'b} = \elt{b} = p_{m'}$ for some $m' < m$. Now, $s_i = \elt{vc}
= \elt{v'bv''c} = p_{m'}\elt{v''}x^\beta y$, and so by the definition
of the action, $p_{2^i} = p_{m'}\elt{v''}x^\beta$. Thus $\elt{v''} =
s^{2^i - m' - \beta}$. So each letter of $v''$ lies in
$A_x$. Furthermore, since each such letter represents an element of
length at most $k$, it follows that $|v''| > (2^i - m -
\beta)/k$ and further that $|v| > (2^i - m - \beta)/k + 2$.

Since $v'' \in A_x^*$, the subalphabet $A_x$ must be non-empty. Let $a
\in A_x$, with $\elt{a} = x^\gamma$. Since $(T-Q) \cdot F$ does not
contain any element of $Q$, the subalphabet $A_Q$ is non-empty and
contains some letter $b$ with $\elt{b} = q_\alpha$.

Then $\elt{ba^hc} = q_\alpha x^{\gamma h + \beta}y = q_{\alpha+\gamma
  h+\beta}y = s_{\alpha+\gamma h + \beta}$. By choosing $h$ large
enough, $s_{\alpha+\gamma h + \beta}$ is represented in $L$ by a word $v$ of length greater
than $(2^{\alpha+\gamma h + \beta} - m -\beta)/k + 2$. Again choosing $h$ large enough, so that
\[
(2^{\alpha+\gamma h +\beta} - m -\beta)/k + 2 > h + 2.
\]
one obtains $|v| > |ba^hc|$. Thus $v$ is not a minimal-length representative
of $s_{\alpha+\gamma h + \beta}$, which contradicts $L$ being a robust
Markov language for $F[T]$.
\end{proof}

\begin{example}
\label{ex:robustlymarkovnotstronglymarkov}
Let
\begin{align*}
P &= \{p_i : i \in \nset \cup \{0\}\}, \\
Q &= \{q_i : i \in \nset\}, \\
R &= \{r_i : i \in \nset\}, \\
T &= P \cup Q \cup R. 
\end{align*}
Let $F$ be a free monoid with basis
$X = \{x,y,z\}$. Define an action of $X$ on $T$ as follows
\begin{align*}
p_i \cdot x &= p_{i+1}, &
q_i \cdot x &= \begin{cases}
q_i & \text{if $i \neq 2^j$ for any $j \in \nset \cup \{0\}$}, \\
r_i & \text{if $i = 2^j$ for some $j \in \nset \cup \{0\}$}, 
\end{cases} &
r_i \cdot x &= r_i, \\
p_i \cdot y &= q_i, &
q_i \cdot y &= q_i, &
r_i \cdot y &= r_i, \\
p_i \cdot z &= p_i, & 
q_i \cdot z &= \begin{cases}
q_i & \text{if $i = 2^j$ for some $j \in \nset \cup \{0\}$}, \\
r_i & \text{if $i \neq 2^j$ for any $j \in \nset \cup \{0\}$}, 
\end{cases} &
r_i \cdot z &= r_i.
\end{align*}
(Notice that $q_i$ is fixed by one of $x$ or $z$ and sent to $r_i$ by
the other, and that which letter fixes $q_i$ and which sends it to
$r_i$ depends on whether $i$ is a power of $2$.) Since $F$ is free on
$X$, this action extends to a unique action of $F$ on
$T$. \fullref{Figure}{fig:nonstrongmarkovaction} shows the graph of
the action of $X$ on
$T$. \fullref{Propositions}{prop:exrobustlymarkov} and
\ref{prop:exnotstronglymarkov} below show that $F[T]$ is robustly Markov but
not strongly Markov.
\end{example}

\begin{figure}[t]
\centerline{\includegraphics{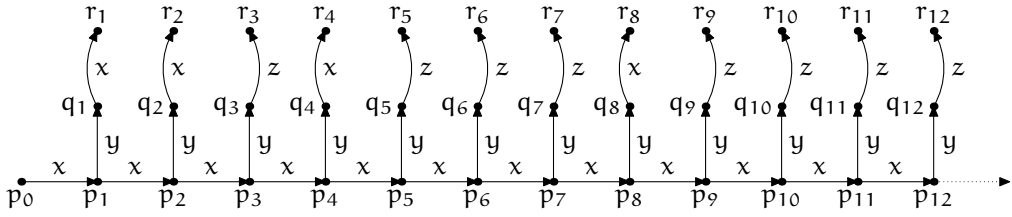}}
\caption{Part of the graph of the action of $X$ on $T$. Edges
  corresponding to actions which fix elements of $T$ are not shown}
\label{fig:nonstrongmarkovaction}
\end{figure}

\begin{proposition}
\label{prop:exrobustlymarkov}
The monoid $F[T]$ is robustly Markov.
\end{proposition}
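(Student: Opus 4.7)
The plan is to exhibit a specific alphabet $A$ and a specific regular, prefix-closed language $L$ of unique minimal-length representatives for $F[T]$ over $A$.  The guiding idea is to enlarge the obvious generating set $\{x,y,z,p_0\}$ by one additional letter $e$ whose value is the length-four word $xyxz$ of $F$.  The virtue of this choice is that $xyxz$ acts directly from any $p_i$ or $q_i$ into the sink set $R$, and the resulting image $r_{i+1}$ or $r_i$ does not depend on any power-of-$2$ test, so $e$ furnishes a uniform ``final step'' that exactly bypasses the obstruction that \fullref{Proposition}{prop:exnotstronglymarkov} will exploit for the natural generating set.

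Concretely, I would take $A = \{a,b,c,d,e\}$ with $\elt{a}=x$, $\elt{b}=y$, $\elt{c}=z$, $\elt{d}=p_0$, $\elt{e}=xyxz$; a short case analysis on whether the relevant index is a power of $2$ then yields
\[
p_i \cdot \elt{e} = r_{i+1}, \qquad q_i \cdot \elt{e} = r_i, \qquad r_i \cdot \elt{e} = r_i.
\]
I would then propose as Markov language
\[
L \;=\; L_F \,\cup\, d a^* \,\cup\, d a^+ b \,\cup\, d a^* e,
\]
where $L_F$ denotes the set of words in $\{a,b,c,e\}^*$ that contain no occurrence of $abac$ as a factor.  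The four pieces are regular and pairwise disjoint at the word level, and each is prefix-closed.  The three $T$-components biject onto $P$, $Q$, and $R$ via $d a^i \mapsto p_i$, $d a^i b \mapsto q_i$, and $d a^i e \mapsto r_{i+1}$; minimality of these representatives is verified by observing that every representative of a $T$-element must begin with the unique $T$-generator $d$ and then comparing the two candidate routes from $p_0$ to $r_i$\dash the $e$-route of total length $i+1$ beats the $y$-route of total length $i+3$.

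The main obstacle will be showing that $L_F$ is a set of unique minimal-length representatives for the elements of $F$.  The key combinatorial input is that $xyxz$ has no overlap with any nontrivial shift of itself, so in any $w \in F$ the occurrences of $xyxz$ are pairwise disjoint.  Consequently, every representation of $w$ over $\{a,b,c,e\}$ corresponds to a partition of the letters of $w$ into tokens, each token being a single free generator or an $xyxz$-block recoded as $e$, and the shortest such partition is obtained by recoding \emph{every} occurrence of $xyxz$.  This simultaneously yields uniqueness of the minimal representative and characterises it as the unique representation containing no $abac$ factor; regularity and prefix-closure of $L_F$ follow because factor-avoidance is a regular property preserved by prefixes.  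Putting the pieces together, $L$ is a robust monoid Markov language for $F[T]$ over $A$, so $F[T]$ is robustly Markov, the monoid/semigroup distinction being absorbed by \fullref{Proposition}{prop:robustlymarkovsgmon}.
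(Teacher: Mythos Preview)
Your argument is correct and gives a genuinely different construction from the paper's.

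The paper takes the alphabet $\{a,b,c,d,e,f\}$ with $\elt{a}=x$, $\elt{b}=y$, $\elt{c}=z$, $\elt{d}=yx$, $\elt{e}=yz$, $\elt{f}=p_0$, and uses the pair of length-$2$ generators $yx$, $yz$: for each $i$, exactly one of $q_i\cdot x$, $q_i\cdot z$ equals $r_i$ and the other fixes $q_i$, so $\{fa^id,fa^ie\}$ is a two-element set bijecting onto $\{q_i,r_i\}$ without the proof needing to know which is which. The $F$-part is then handled by the finite confluent system $\{(ba,d),(bc,e)\}$ via \fullref{Proposition}{prop:rsmarkov}. Your approach instead introduces a single length-$4$ generator $\elt{e}=xyxz$ which sends $p_i$ uniformly to $r_{i+1}$, so $Q$ and $R$ are handled separately by $da^+b$ and $da^*e$; this is cleaner on the $T$-side. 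On the $F$-side, your $L_F$ is in effect the normal-form language of the one-rule length-reducing system $\{(abac,e)\}$, whose confluence is exactly the ``no self-overlap of $xyxz$'' observation you make; so your argument and the paper's both ultimately rest on \fullref{Proposition}{prop:rsmarkov}, just for different rewriting systems.

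Two small points of phrasing: a representative of a $T$-element need only \emph{contain} $d$, not begin with it, but since in $F[T]$ everything left of the last $d$ can be deleted, minimality lets you reduce to words of the form $dw$ as you intend. And the four pieces of your $L$ are not individually prefix-closed (e.g.\ $da \in da^*$ is a prefix of $dab \in da^+b$), but their union is, which is what matters.
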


\begin{proof}
Let $A = \{a,b,c,d,e,f\}$ be an alphabet representing elements of $F[T]$ as follows:
\[
\elt{a} = x,\quad\elt{b} = y,\quad\elt{c} = z,\quad\elt{d} = yx,\quad\elt{e} = yz,\quad\elt{f} =p_0.
\]
Let $A' = A - \{f\}$. Then $(A',\{(ba,d),(bc,e)\})$ is a confluent
noetherian rewriting system presenting the subsemigroup $F$ of
$F[T]$. Hence its language of normal forms $K_1 = A^* - A^*(ba \cup
bc)A^*$ is a robust Markov language for the subsemigroup $F$ of $F[T]$
by \fullref{Proposition}{prop:rsmarkov}.

Let $K_2 = fa^* \cup fa^+d \cup fa^+e$. Then $K_2$ is
$+$-prefix-closed and regular. The subset $fa^*$ maps bijectively onto
$P$. The subsets $fa^+d$ and $fa^+e$ map bijectively onto $Q \cup R$,
since for each $i \in \nset$, exactly one of the following cases
holds:
\begin{itemize}

\item $\elt{fa^id} = p_0x^iyx = p_iyx = q_ix = r_i$ and $\elt{fa^ie} = p_0x^iyz = p_iyz = q_iz = q_i$ (this holds if $i = 2^j$ for some $j \in \nset$);
\item $\elt{fa^id} = p_0x^iyx = p_iyx = q_ix = q_i$ and $\elt{fa^ie} = p_0x^iyz = p_iyz = q_iz = r_i$ (this holds if $i \neq 2^j$ for any $j \in \nset$).

\end{itemize}
Thus $K_2$ maps bijectively onto $T$.

It remains to show that every word in $K_2$ is a minimal length
representative. Let $u \in A^*$ represent $p_i$. Then $u$ must contain
$f$, since all other letters in $A$ represent elements of $F$. So let
$u = u'fu''$, where $u'' \in (A - \{f\})^*$, so that this
distinguished letter $f$ is the rightmost such letter in $u$. Each
symbol in $A -\{f\}$ represents an element of $F$ that contains at
most one letter $x$. So, by the definition of the action on the $p_i$,
it follows that $u''$ must contain at least $i$ letters. Hence $|u|
\geq i + 1$. Any word over $A$ representing $q_i$ or $r_i$ must
therefore have length at least $i+2$. By the observations in the
preceding paragraph, the representative in $K_2$ of $p_i$ has length
$i+1$, and those of $q_i$ and $r_i$ both have length $i+2$.

Therefore the language $K_1 \cup K_2$ is prefix-closed, regular, and
consists of minimal-length representatives for $F[T]$. So $K_1 \cup
K_2$ is a robust Markov language for $F[T]$.
\end{proof}

\begin{proposition}
\label{prop:exnotstronglymarkov}
The monoid $F[T]$ is not strongly Markov.
\end{proposition}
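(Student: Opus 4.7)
My plan is to exhibit a finite alphabet $B$ representing a generating set for $F[T]$ such that $F[T]$ admits no robust Markov language over $B$. I would take $B = \{a, b, c, f\}$ with $\elt{a} = x$, $\elt{b} = y$, $\elt{c} = z$, and $\elt{f} = p_0$; these generate $F[T]$ since products of $a, b, c$ cover $F$ and combining $f$ with words in $\{a, b, c\}^*$ covers all of $T$. The key feature of this choice is that the two distinct exits from $q_i$ to $r_i$---namely via $x$ when $i = 2^j$ and via $z$ when $i \neq 2^j$---are encoded by two different single letters of $B$, so the obstruction will stem from the fact that no prefix-closed regular language can encode which of these exits to use, since that choice depends on whether the index is a power of two.

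The first step is to compute minimum-length representatives over $B$. Using the rules $sx = x$ for $s \in F,\,x \in T$ together with $xy = y$ for $x, y \in T$, any word over $B$ whose value lies in $T$ has the same value as the suffix starting with its last occurrence of $f$, and this value equals $p_0 \cdot w$, where $w$ is the image in $F$ of the portion following that final $f$. Tracking the $F$-action on $T$ then yields: the unique minimum-length representative of $q_i$ is $fa^ib$ (of length $i + 2$), since one must reach $p_i$ before applying $y$ and the index is frozen once in $Q$; and the unique minimum-length representative of $r_i$ is $fa^iba$ if $i = 2^j$ and $fa^ibc$ if $i \neq 2^j$ for any $j$, because the only letter taking $q_i$ to $r_i$ is $a$ in the former case and $c$ in the latter.

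Now I would assume for contradiction that $L$ is a robust Markov language for $F[T]$ over $B$. Since the representatives in $L$ are the unique minimum-length ones, the characterization above determines exactly which words of the form $fa^iba$ belong to $L$. The word $fa^iba$ represents $q_i \cdot x$, which equals $r_i$ when $i = 2^j$ and equals $q_i$ otherwise; therefore $fa^iba \in L$ precisely when $i$ is a power of two (in the opposite case the strictly shorter $fa^ib$ is already the representative of $q_i$). Consequently
\[
L \cap fa^*ba \;=\; \{fa^{2^j}ba : j \in \nset \cup \{0\}\}.
\]
This intersection must be regular, but it corresponds to $\{2^j : j \geq 0\} \subseteq \nset$, which is not eventually periodic, contradicting regularity. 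Hence no such $L$ exists, so $F[T]$ is not strongly Markov.

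The main obstacle is the uniqueness part of the case analysis for the minimum-length representatives: I would need to rule out representatives with multiple occurrences of $f$, with a $c$ inserted before the $b$ (which merely wastes length since $z$ fixes elements of $P$), or with letters $a$ split around the $b$ (which would land at the wrong index upon the $b$, forcing either an unreachable target or additional letters). Once these routine verifications are in place, the contradiction via non-regularity of $\{2^j : j \geq 0\}$ is immediate.
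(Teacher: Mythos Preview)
Your proposal is correct and follows essentially the same approach as the paper: you choose the same generating alphabet $\{a,b,c,f\}$, identify $fa^{2^j}ba$ as the unique shortest representative of $r_{2^j}$, and derive a contradiction from the fact that the set of powers of $2$ is not eventually periodic. The only cosmetic difference is the finishing move: the paper pumps inside $fa^{2^k}ba$ to obtain some $fa^mba \in L$ with $m$ not a power of $2$, then invokes prefix-closure to get $fa^mb \in L$ and a clash of two representatives of $q_m$; you instead compute $L \cap fa^*ba$ exactly and observe it is non-regular, using only the minimum-length condition (not prefix-closure) to exclude $fa^iba$ for non-powers $i$.
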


\begin{proof}
Suppose, with the aim of obtaining a contradiction, that $F[T]$ is
strongly Markov. Let $A = \{a,b,c,f\}$ represent elements of $F[T]$ as
follows:
\[
\elt{a} = x,\quad\elt{b} = y,\quad\elt{c} = z,\quad\elt{f}=p_0.
\]
Since $F[T]$ is strongly Markov, it admits a robust Markov language
$L$ over the alphabet $A$. Let $n$ be greater than the number of
states in an automaton recognizing $L$. Choose $k$ such that $2^k > n$.

It is easy to see that the unique shortest word over $A$ representing
$r_{2^k}$ is $fa^{2^k}ba$. Therefore this word lies in $L$. By the
pumping lemma, $a^{2^k}$ factorizes as $v'v''v'''$, where $v', v'',
v''' \in a^*$ and $fv'(v'')^\alpha v'''ba \in L$ for every $\alpha \in
\nset \cup \{0\}$. Choose $\alpha$ so that $m = |v'(v'')^\alpha v'''|$
is not a power of $2$. Then $\elt{fv'(v'')^\alpha v'''b} = q_m$, and
$\elt{fv'(v'')^\alpha v'''ba} = q_mx = q_m$. Hence $fv'(v'')^\alpha
v'''b$ and $fv'(v'')^\alpha v'''ba$ represent the same element of
$F[T]$. Since both these words lie in $L$ by prefix-closure, this
contradicts the uniqueness of representatives in $L$.
\end{proof}

\section{Commutative semigroups}
\label{sec:comm}

That finitely generated commutative semigroups are Markov could be
deduced from \fullref{Proposition}{prop:rsmarkov}, and the fact that
finitely generated commutative monoids have presentations via finite
confluent noetherian rewriting systems \cite{diekert_commutative}, and
the closure of the class of Markov semigroups under adjoining and
removing an identity (\fullref{Proposition}{prop:adjone} below). However, a
stronger result holds:

\begin{proposition}
\label{prop:commmarkov}
Finitely generated commutative semigroups are strongly Markov.
\end{proposition}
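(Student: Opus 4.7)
The plan is to show that, for any finite alphabet $A = \{a_1, \ldots, a_n\}$ representing a generating set for $S$ with fixed order $a_1 < \cdots < a_n$, the language $L \subseteq A^+$ of ShortLex-minimum representatives is a robust semigroup Markov language. The $+$-prefix-closure of $L$ is automatic: if $w = uv \in L$ with $u \in A^+$ and some $u' \in A^+$ satisfied $\elt{u'} = \elt{u}$ with $u' <_{\mathrm{SL}} u$, then $u'v$ would represent $\elt{w}$ and be ShortLex-smaller than $w$, contradicting $w \in L$. Minimality of length is part of the definition, so the task reduces to showing that $L$ is regular.

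By commutativity, the ShortLex-minimum representative of each $s \in S$ must already be sorted as $a_1^{k_1}\cdots a_n^{k_n}$: permuting the letters of any minimum-length representative into sorted order preserves length and produces the lex-least permutation. Hence $L \subseteq a_1^* \cdots a_n^*$, and under the bijection $a_1^{k_1}\cdots a_n^{k_n} \leftrightarrow (k_1, \ldots, k_n)$, $L$ corresponds to a subset $M \subseteq \nset^n \setminus \{\mathbf{0}\}$. Since regular subsets of $a_1^* \cdots a_n^*$ correspond exactly to semilinear subsets of $\nset^n$ (via Parikh / Ginsburg--Spanier), it suffices to prove that $M$ is semilinear.

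For this, consider the surjective homomorphism $\phi\colon \nset^n \to S$, $(k_1,\ldots,k_n) \mapsto \elt{a_1^{k_1}\cdots a_n^{k_n}}$, with kernel congruence $\sim$. By R\'edei's theorem, $\sim$ is finitely generated as a congruence on $\nset^n$, and the classical consequence (using Dickson's lemma) is that $\sim$ is a semilinear, equivalently Presburger-definable, subset of $\nset^{2n}$. The set $M$ is then cut out by the Presburger condition that $(k_1,\ldots,k_n)$ minimises $\sum_i k_i'$ over its $\sim$-class and is \emph{lex-greatest} among such minimisers (``lex-greatest'' because within equally long sorted words a longer leading $a_1$-block makes the word lex-smaller, and so on inductively). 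So $M$ is semilinear, $L$ is regular, and $L$ is a robust semigroup Markov language. The main obstacle is the semilinearity of $\sim$ --- a classical but substantive structural fact about finitely generated commutative semigroups; once it is in hand, the ShortLex construction yields the result uniformly across all generating sets, which is exactly what ``strongly Markov'' requires.
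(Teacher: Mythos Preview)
Your argument is correct, but it takes a genuinely different route from the paper's proof.

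The paper works with a constructive form of R\'edei's theorem (as in Rosales--Grillet): it produces finitely many pairs $(u_i,v_i)$ with $u_i \slex\prec v_i$ such that rewriting $w \mapsto w - v_i + u_i$ terminates at the ShortLex-minimal tuple. Consequently the normal-form set is simply
\[
M = \bigl\{w \in (\nset\cup\{0\})^n \setminus \{0\} : w - v_i \notin (\nset\cup\{0\})^n \text{ for all } i\bigr\},
\]
which is visibly recognisable by a finite automaton (finitely many subtraction tests), and the $+$-prefix-closure is checked by hand from this description. No Presburger machinery or semilinearity of the congruence is invoked.

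Your approach instead treats the kernel congruence $\sim$ as a Presburger-definable subset of $\nset^{2n}$, then extracts $M$ by a first-order formula (universal quantifier over the class, minimising total degree, then selecting the lex-extreme tuple), and finally passes through the Ginsburg--Spanier correspondence between semilinear subsets of $\nset^n$ and regular subsets of $a_1^*\cdots a_n^*$. This is slicker and more conceptual, and it makes transparent that \emph{any} Presburger-definable selection rule (not just ShortLex) would yield a regular language of normal forms. The cost is that the semilinearity of $\sim$ is itself a substantive theorem --- you correctly flag it as the crux --- whereas the paper's explicit rewriting description sidesteps that black box entirely and gives a concrete automaton. Note also that your lex convention on tuples is reversed relative to the paper's (you pick lex-greatest tuples to get lex-least words), but this is harmless: both yield valid robust Markov languages.
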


[The first part of the following proof parallels the proof that all
  commutative cancellative semigroups are automatic; see
  \cite[Theorem~5.4.2]{c_phdthesis}.]

\begin{proof}
Let $A$ be a finite alphabet representing an arbitrary generating set
for some commutative semigroup $S$.  Suppose $A =
\{a_1,\ldots,a_n\}$. Consider elements of $S$ using tuples: identify
the tuple $(\alpha_1,\ldots,\alpha_n)$ with the element
$\elt{a_1^{\alpha_1}\cdots a_n^{\alpha_n}}$. Define the ShortLex
ordering $\slex\prec$ of these tuples by
\begin{align*}
(\alpha_1,\ldots,\alpha_n) \slex\prec (\beta_1,\ldots,\beta_n) \iff& \sum_{i=1}^n \alpha_i < \sum_{i=1}^n \beta_i,\text{ or}\\
&\bigg[\sum_{i=1}^n \alpha_i = \sum_{i=1}^n \beta_i\\
&\qquad\text{ and }(\alpha_1, \ldots, \alpha_n) \lex\sqsubset (\beta_1, \ldots, \beta_n)\bigg],
\end{align*}
where $\lex\sqsubset$ is the lexicographical order of tuples:
$(\alpha_1, \ldots, \alpha_n) \lex\sqsubset (\beta_1, \ldots,
\beta_n)$ if the leftmost non-zero co\"{o}rdinate of $(\beta_1 -
\alpha_1, \ldots, \beta_n - \alpha_n)$ is positive.
 
R{\'{e}}dei's Theorem~\cite{redei_commutative_german} asserts that $S$
is finitely presented. An approach to this theorem found in
\cite[Chapter~5]{rosales_commutative} (which is a modification of
the proof in \cite{grillet_redei}) shows that the semigroup $S$
is isomorphic to
\[
\left[(\nset \cup \{0\})^n - \{(0,\ldots,0)\}\right]/\cgen{\{(u_1,v_1),\ldots,(u_n,v_n)\}},
\]
where $u_i \slex\prec v_i$, and such that the ShortLex-minimal
representative of $\elt{w} \in (\nset \cup \{0\})^n -
\{(0,\ldots,0)\}$ can be found by repeatedly replacing $w$ by $w - v_i
+ u_i$ whenever every co{\"{o}}rdinate of $w - v_i$ is
non-negative. (Addition is performed componentwise on tuples.)

Since the ShortLex order is compatible with the operation (that is,
for all $x \in S$, $u \slex\prec v \implies u+x \slex\prec v+x$), the
set of ShortLex-minimal elements is simply
\[
M = \left\{w \in (\nset \cup \{0\})^n - \{(0,\ldots,0)\} : \text{$w - v_i$ is not in $(\nset \cup \{0\})^n$ for any $i$}\right\}.
\]
Let
\[
K = \{a_1^{\alpha_1}\cdots a_n^{\alpha_n} : (\alpha_1, \ldots, \alpha_n) \in M\}.
\]
Since the number of $v_i$ is finite, a finite state automaton can
check whether a word $a_1^{\alpha_1}\cdots a_n^{\alpha_n}$ lies in
$K$. Therefore $K$ is regular.

Finally, notice that if a word $a_1^{\alpha_1}\cdots a_n^{\alpha_n}$
lies in $K$, then one obtains its longest proper prefix by decreasing
by $1$ the right-most non-zero exponent $\alpha_i$. (Recall that some
of the $\alpha_i$, but not all, can be $0$.) Thus if $w$ is the tuple
in $M$ corresponding to a word in $K$, then the tuple $w'$
corresponding to its longest proper prefix is obtained by decreasing
the right-most non-zero co\"ordinate by $1$. Hence if $w - v_i \notin
(\nset \cup \{0\})^n$ then $w' - v_i \notin (\nset \cup
\{0\})^n$. Consequently $K$ is closed under taking longest proper
non-empty prefixes, and so, by iteration, is $+$-prefix-closed. By the
definition of the ShortLex ordering, the language $K$ consists of
minimal-length representatives. So $K$ is a robust Markov language for
$S$. Since the generating set represented by $A$ was arbitrary, $S$ is
strongly Markov.
\end{proof}

Finitely generated abelian groups are Markov, as a consequence of the
more general result that finitely generated polycyclic groups are
Markov \cite[Corollaire~11]{ghys_markov}. However, that finitely
generated abelian groups are strongly Markov (an immediate corollary
of \fullref{Proposition}{prop:commmarkov}) does not seem to have been
explicitly noted anywhere, although it is implicit in
\cite[Chs~3--4]{epstein_wordproc}.

\section{Virtually abelian, nilpotent, and polycyclic groups}
\label{sec:polycyclic}

It is known that nilpotent groups need not be strongly Markov, since
they may have irrational (indeed, transcendental) growth functions
with respect to some generating
sets~\cite[Theorem~B]{stoll_growthseries}. Furthermore, there exist
virtually abelian groups that do not admit any regular language of
minimal length representatives over some generating set (that is, even
without requiring uniqueness) \cite{neumann_regulargeodesic}. Thus
virtually abelian groups are not in general strongly Markov.

\begin{question}
Are finitely generated semigroups that are nilpotent (in the sense of
Malcev \cite{malcev_nilpotent}) Markov? In particular, are all
finitely generated subsemigroups of nilpotent groups are Markov?
\end{question}

This section exhibits two examples to show that finitely generated
subsemigroups of virtually abelian groups and of polycyclic groups
need not be Markov. All finitely generated sub\emph{groups} of such
groups are Markov, since these classes of groups are closed under
taking subgroups.

The example of a non-Markov subsemigroup of a virtually abelian group
(\fullref{Example}{ex:vabel}) is particularly important: First, it
shows that the class of groups all of whose finitely generated
subsemigroups are Markov is not closed under forming finite
extensions. Second, virtually abelian groups satisfy a non-trivial
semigroup identity and thus have the following property: if $S$ is a
subsemigroup and $H$ the subgroup it generates, then $H$ is
[isomorphic to] the universal group of $S$. In general groups, this is
not true: $H$ is in general a homomorphic image of the universal group
of $S$. (The universal group of $S$ is the group obtained by taking a
presentation for $S$ and considering it as a group presentation; see
\cite[Ch.~12]{clifford_semigroups2} or the discussion in
\cite[\S~5.2.1]{c_phdthesis} for background information.) Thus the
example is a non-Markov semigroup with a Markov universal group.

The following technical result will be used in proving both examples non-Markov:

\begin{lemma}
\label{lem:abcdefghijnonmarkov}
Let $S$ be a semigroup and $A = \{a,b,c,d,e,f,g,h,i,j\}$ an alphabet
representing a finite generating set for $S$. Suppose that for
$\alpha,\beta \in \nset \cup \{0\}$ with $\alpha \neq \beta$ the
following conditions hold:
\begin{enumerate}
\item The element represented by $ab^\alpha cd^\beta e$ is represented
  by no other word over $A$.
\item The element represented by $fg^\alpha hi^\beta j$ is represented
  by no other word over $A$.
\item The equality $\elt{ab^\alpha cd^\alpha e} = \elt{fg^\alpha
  hi^\alpha j}$ holds, and the only words representing this element
  are $ab^\alpha cd^\alpha e$ and $fg^\alpha hi^\alpha j$.
\end{enumerate}
Then $S$ is not Markov.
\end{lemma}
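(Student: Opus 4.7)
The plan is to argue by contradiction. The first step is to reduce the hypothesis that $S$ is Markov to the existence of a (not necessarily prefix-closed) regular language of unique representatives over the particular alphabet $A$; the second step is to exploit the rigidity supplied by hypotheses (1)--(3) through a pumping-lemma argument on the diagonal $\alpha = \beta$.

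Suppose for contradiction that $S$ is Markov. Since $S$ then admits a regular language of unique representatives over some generating set, \fullref{Proposition}{prop:markovchangegen} supplies a regular language $L \subseteq A^*$ such that every element of $S$ has exactly one representative in $L$. From hypothesis~(1) one reads off that $ab^\alpha cd^\beta e \in L$ whenever $\alpha \neq \beta$, and from~(2) that $fg^\alpha hi^\beta j \in L$ whenever $\alpha \neq \beta$. Hypothesis~(3) forces, for each $\alpha$, exactly one of the two words $ab^\alpha cd^\alpha e$ and $fg^\alpha hi^\alpha j$ to lie in $L$; setting
\[
I = \bigl\{\alpha \in \nset \cup \{0\} : ab^\alpha cd^\alpha e \in L\bigr\} \text{ and } J = \bigl\{\alpha \in \nset \cup \{0\} : fg^\alpha hi^\alpha j \in L\bigr\},
\]
the sets $I$ and $J$ partition $\nset \cup \{0\}$.

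Next, intersect $L$ with the regular language $ab^*cd^*e$:
\[
L \cap ab^*cd^*e = \bigl\{ab^\alpha cd^\beta e : \alpha \neq \beta\bigr\} \cup \bigl\{ab^\alpha cd^\alpha e : \alpha \in I\bigr\}.
\]
Its complement inside $ab^*cd^*e$ is $\{ab^\alpha cd^\alpha e : \alpha \in J\}$, and is regular. A pumping-lemma argument, pumping within the initial $b$-block of a sufficiently long element, shows that every regular subset of $\{ab^\alpha cd^\alpha e : \alpha \geq 0\}$ must be finite (pumping alters the number of $b$'s while fixing the number of $d$'s, destroying the diagonal condition); hence $J$ is finite. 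The symmetric argument, using $L \cap fg^*hi^*j$ in place of $L \cap ab^*cd^*e$, shows that $I$ is finite. But then $I \cup J = \nset \cup \{0\}$ is finite, which is the desired contradiction.

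The main conceptual obstacle is that a Markov language for $S$ could be defined over an alphabet quite different from $A$, so the uniqueness-over-$A$ hypotheses cannot be applied to such a language directly. \fullref{Proposition}{prop:markovchangegen} bypasses this by furnishing a regular language of unique representatives over $A$ itself (at the cost of losing prefix-closure, which is in fact not needed in what follows); after this reduction, the combinatorial core reduces to a routine application of the pumping lemma.
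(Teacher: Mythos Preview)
Your proof is correct and follows essentially the same approach as the paper's: reduce via \fullref{Proposition}{prop:markovchangegen} to a regular language $L$ of unique representatives over $A$, then use the pumping lemma to show that a regular subset of the diagonal $\{ab^\alpha cd^\alpha e : \alpha \geq 0\}$ (and its $fg\cdots j$ counterpart) must be finite. The only cosmetic difference is that the paper argues that at least one of the two complements $ab^*cd^*e - L$ and $fg^*hi^*j - L$ is infinite and pumps that one, whereas you show both are finite and derive the contradiction from $I \cup J = \nset \cup \{0\}$; these are logically equivalent formulations of the same argument.
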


\begin{proof}
Suppose for \textit{reductio ad absurdum} that $S$ is Markov. Then it
admits a regular language of unique representatives $L$ over $A$ by
\fullref{Proposition}{prop:markovchangegen}. So $K=L\cap (ab^*cd^*e
\cup fg^*hi^*j)$ is regular. By assumption, when
$\alpha \neq \beta$, the element represented by $ab^\alpha cd^\beta e$
is represented by no other word over $A$. Thus $K \supseteq
\{ab^\alpha cd^\beta e:\alpha \neq \beta\}$ and similarly
$K\supseteq\{fg^\alpha hi^\beta j:\alpha\neq\beta\}$.  Thus $ab^*cd^*e
- K \subseteq \{ab^\alpha cd^\alpha e : \alpha \in \nset \cup \{0\}\}$
and $fg^*hi^*j - K \subseteq \{fg^\alpha hi^\alpha e : \alpha \in \nset
\cup \{0\}\}$.

Furthermore, the only representatives over $A$ of the element
$\elt{ab^\alpha cd^\alpha e}$ are $ab^\alpha cd^\alpha e$ and
$fg^\alpha hi^\alpha j$. So at least one of the regular languages
$ab^*cd^*e - K$ and $fg^*hi^*j - K$ is infinite. Assume the former; the
latter case is similar. Since $ab^*cd^*e - K \subseteq \{ab^\alpha
cd^\alpha e : \alpha \in \nset \cup \{0\}\}$ is infinite, it contains
arbitrarily long words $ab^\alpha cd^\alpha e$. So a string of symbols
$b$ can be pumped, which contradicts the fact that every word in this
language is of the form $ab^\alpha cd^\alpha e$. Thus $S$ is not
Markov.
\end{proof}

\begin{example}
\label{ex:polycyclic}
The semigroup presented by
\[
\pres{a,b,c,d,e,f,g,h,i,j}{ab^\alpha cd^\alpha e=fg^\alpha hi^\alpha j,
\alpha \in \nset \cup 0},
\]
which is isomorphic to a subsemigroup of a polycyclic group
\cite[\S~3]{c_ssgofdp}, is not Markov by
\fullref{Lemma}{lem:abcdefghijnonmarkov} above.
\end{example}

\begin{example}
\label{ex:vabel}
Let $\symgroup{11}$ be the symmetric group on eleven elements. Let
$\zset^{11}$ be the direct product of eleven copies of the integers under
addition. View elements of $\zset^{11}$ as $11$-tuples of integers. Let $G =
\symgroup{11} \ltimes \zset^{11}$, where $\symgroup{11}$ acts (on the
right) by permuting the components of elements of $\zset^{11}$. (The
$\zset$-components are indexed from $1$ at the left to $11$ at the
right.) The abelian subgroup $\zset^{11}$ of $G$ has index $11!$, so $G$
is a virtually abelian group.

Let $A = \{a,b,c,d,e,f,g,h,i,j\}$ be an alphabet representing elements
of $G$ in the following way:
\begin{align*}
\elt{a} &= [(1\;3),\;(0,1,1,0,0,0,1,0,0,0,0)],& \elt{f} &= [(1\;5),\;(0,1,0,0,1,0,1,0,0,0,0)],\\
\elt{b} &= [\id,\;(0,0,1,0,0,0,0,0,0,1,0)],& \elt{g} &= [\id,\;(0,0,0,0,1,0,0,0,0,0,1)],\\
\elt{c} &= [(1\;3)(2\;4),\;(1,0,0,0,0,0,0,1,0,0,0)],& \elt{h} &= [(1\;5)(2\;6),\;(1,0,0,0,0,0,0,1,0,0,0)],\\
\elt{d} &= [\id,\;(0,0,0,1,0,0,0,0,0,-1,0)],& \elt{i} &= [\id,\;(0,0,0,0,0,1,0,0,0,0,-1)],\\
\elt{e} &= [(2\;4),\;(0,1,0,0,0,0,0,0,1,0,0)],& \elt{j} &= [(2\;6),\;(0,1,0,0,0,0,0,0,1,0,0)].
\end{align*}
Let $S$ be the subsemigroup of $G$ generated by $\elt{A}$. The aim is
show that $S$ is not Markov. [We admit that the generators in
  $\elt{A}$ may look intimidating. However, they interact in a fairly
  nice way, and the method in their madness will become apparent.]

First of all, some preliminaries are necessary. For any $\alpha,\beta \in \nset \cup \{0\}$,
\begin{align*}
&\;\phantom{=}\;\elt{ab^\alpha cd^\beta e} \\
&= [(1\;3),\;(0,1,1,0,0,0,1,0,0,0,0)][\id,\;(0,0,\alpha,0,0,0,0,0,0,\alpha,0)]\elt{cd^\beta e} \\
&= [(1\;3),\;(0,1,\alpha+1,0,0,0,1,0,0,\alpha,0)][(1\;3)(2\;4),\;(1,0,0,0,0,0,0,1,0,0,0)]\elt{d^\beta e} \\
&= [(2\;4),\;(\alpha+2,0,0,1,0,0,1,1,0,\alpha,0)][\id,\;(0,0,0,\beta,0,0,0,0,0,-\beta,0)]\elt{e} \\
&= [(2\;4),\;(\alpha+2,0,0,\beta+1,0,0,1,1,0,\alpha-\beta,0)][(2\;4),\;(0,1,0,0,0,0,0,0,1,0,0)] \\
&= [\id,\;(\alpha+2,\beta+2,0,0,0,0,1,1,1,\alpha-\beta,0)],
\end{align*}
and
\begin{align*}
&\;\phantom{=}\;\elt{fg^\alpha hi^\beta j} \\
&= [(1\;5),\;(0,1,0,0,1,0,1,0,0,0,0)][\id,\;(0,0,0,0,\alpha,0,0,0,0,0,\alpha)]\elt{hi^\beta j} \\
&= [(1\;5),\;(0,1,0,0,\alpha+1,0,1,0,0,0,\alpha)][(1\;5)(2\;6),\;(1,0,0,0,0,0,0,1,0,0,0)]\elt{i^\beta j} \\
&= [(2\;6),\;(\alpha+2,0,0,0,0,1,1,1,0,0,\alpha)][\id,\;(0,0,0,0,0,\beta,0,0,0,0,-\beta)]\elt{j} \\
&= [(2\;6),\;(\alpha+2,0,0,0,0,\beta+1,1,1,0,0,\alpha-\beta)][(2\;6),\;(0,1,0,0,0,0,0,0,1,0,0)]\\
&= [\id,\;(\alpha+2,\beta+2,0,0,0,0,1,1,1,0,\alpha-\beta)].
\end{align*}
In particular, $\elt{ab^\alpha cd^\alpha e} = \elt{fg^\alpha hi^\alpha j}$.

\begin{lemma}
\label{lem:vabelreps}
Let $\alpha,\beta \in \nset \cup \{0\}$ with $\alpha \neq \beta$.
\begin{enumerate}
\item The only word over $A$ representing $\elt{ab^\alpha cd^\beta e}$
  is $ab^\alpha cd^\beta e$, and the only word over $A$ representing
  $\elt{fg^\alpha hi^\beta j}$ is $fg^\alpha hi^\beta j$.
\item The only words over $A$ representing $\elt{ab^\alpha cd^\alpha e} = \elt{fg^\alpha hi^\alpha j}$ are $ab^\alpha cd^\alpha e$ and $fg^\alpha hi^\alpha j$.
\end{enumerate}
\end{lemma}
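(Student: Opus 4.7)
The plan is to exploit the semidirect-product arithmetic in $G=\symgroup{11}\ltimes\zset^{11}$, using the multiplication rule $[\pi_1,v_1][\pi_2,v_2]=[\pi_1\pi_2,\,v_1^{\pi_2}+v_2]$ verified in the computations preceding the lemma. The initial observation is that every generator's permutation lies in $\langle(1\;3),(2\;4),(1\;5),(2\;6)\rangle$, which acts trivially on coordinates $7$ through $11$; consequently those coordinates of the vector component of $\elt{w}$ are additive in the letters of $w$. For any word $w$ representing $\elt{ab^\alpha cd^\beta e}=[\id,(\alpha+2,\beta+2,0,0,0,0,1,1,1,\alpha-\beta,0)]$, reading off coordinates $7,8,9,10,11$ forces $|w|_a+|w|_f=|w|_c+|w|_h=|w|_e+|w|_j=1$, $|w|_b-|w|_d=\alpha-\beta$, and $|w|_g=|w|_i$.

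Next I would factor the permutation action on $\{1,\ldots,6\}$ through $\symgroup{\{1,3,5\}}\times\symgroup{\{2,4,6\}}$, since every generator's permutation respects this splitting. The letter-count constraints force each factor to receive exactly two transposition contributions (the first from $\{a,c,f,h\}$ via transpositions in $\{(1\;3),(1\;5)\}$, and the second from $\{c,e,h,j\}$ via $\{(2\;4),(2\;6)\}$). Since two transpositions in $\symgroup{3}$ compose to the identity only when they coincide, $w$ must use exactly one of two configurations: configuration~I uses one each of $a,c,e$ and no $f,h,j$, while configuration~II uses one each of $f,h,j$ and no $a,c,e$.

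In configuration~I, every permutation occurring in $w$ lies in $\{\id,(1\;3),(2\;4),(1\;3)(2\;4)\}$ and hence fixes positions $5$ and $6$, so $v_5(\elt{w})=|w|_g$ and $v_6(\elt{w})=|w|_i$; matching the target yields $|w|_g=|w|_i=0$, and then $|w|_b=\alpha$, $|w|_d=\beta$. Because $b$ and $d$ commute I may normalise each run of $b$s and $d$s and write $w=u_0\sigma_1 u_1\sigma_2 u_2\sigma_3 u_3$, where $\sigma_1\sigma_2\sigma_3$ is a permutation of $a,c,e$ and each $u_i\in b^*d^*$. I would then compute $\elt{w}$ via the multiplication rule for each of the six orderings of $a,c,e$ and match coordinates $1,2,3,4$ with the target: five of the orderings yield coordinate constraints that no non-negative $\beta_i,\delta_i$ summing to $\alpha,\beta$ can satisfy, while only the ordering $ace$ is consistent and uniquely forces $u_0=u_3=\emptyword$, $u_1=b^\alpha$, $u_2=d^\beta$. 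Thus configuration~I produces exactly $w=ab^\alpha cd^\beta e$.

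Configuration~II is governed by the automorphism of $G$ that swaps the pairs $a\leftrightarrow f$, $b\leftrightarrow g$, $c\leftrightarrow h$, $d\leftrightarrow i$, $e\leftrightarrow j$ together with coordinate swaps $3\leftrightarrow 5$, $4\leftrightarrow 6$, $10\leftrightarrow 11$; this restricts to an automorphism of $S$ exchanging the two configurations. The same analysis then forces $w=fg^{\alpha'}hi^{\beta'}j$ for some $\alpha',\beta'\geq 0$, and a direct computation gives $\elt{fg^{\alpha'}hi^{\beta'}j}=[\id,(\alpha'+2,\beta'+2,0,0,0,0,1,1,1,0,\alpha'-\beta')]$. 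Matching coordinates $10$ and $11$ against the target forces both $\alpha-\beta$ and $\alpha'-\beta'$ to vanish, which is impossible when $\alpha\neq\beta$; combined with configuration~I this gives part~(1). When $\alpha=\beta$ the computation forces $\alpha'=\beta'=\alpha$, yielding $w=fg^\alpha hi^\alpha j$, which together with configuration~I gives exactly the two representatives in part~(2); the analogous statement for $\elt{fg^\alpha hi^\beta j}$ follows by the same symmetry. The main obstacle I anticipate is the exhaustive enumeration of the six orderings in configuration~I: each case is a routine semidirect-product computation, but cumulatively they carry the bulk of the labour.
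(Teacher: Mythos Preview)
Your argument is correct and follows the same overall strategy as the paper: use the permutation-invariant coordinates $7,8,9,10,11$ to constrain the multiset of letters, use the identity-permutation condition to split into the two configurations $\{a,c,e\}$ and $\{f,h,j\}$, then in each configuration determine the word uniquely.

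Two points of comparison are worth noting. First, your proposed exhaustive check of the six orderings of $a,c,e$ (with interspersed $b^*d^*$ blocks) is more laborious than necessary. The paper observes directly that coordinate~$3$ of the target is~$0$, and since every occurrence of~$b$ contributes~$1$ to coordinate~$3$ unless it lies in a segment where the running permutation sends position~$3$ elsewhere, each~$b$ must appear strictly between~$a$ and~$c$; a symmetric argument using coordinate~$4$ places every~$d$ between~$c$ and~$e$ and forces the order~$a\prec c\prec e$. This pins down the word in a few lines rather than six computations. Second, your use of the inner automorphism given by conjugation by $[(3\;5)(4\;6)(10\;11),0]$ to transfer configuration~I to configuration~II is a genuine economy over the paper, which simply repeats the argument with $f,h,j,g,i$ in place of $a,c,e,b,d$.

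One small gap: after establishing $|w|_g=|w|_i=0$ in configuration~I you assert ``and then $|w|_b=\alpha$, $|w|_d=\beta$'', but coordinate~$10$ alone gives only $|w|_b-|w|_d=\alpha-\beta$. You need one further observation, for instance that the sum of coordinates~$1$ and~$3$ is invariant under all the permutations in configuration~I, so $(v_1+v_3)(\elt{w})=|w|_a+|w|_b+|w|_c=|w|_b+2$, which matched against the target $\alpha+2$ gives $|w|_b=\alpha$; similarly coordinates~$2+4$ give $|w|_d=\beta$. With that filled in, your plan goes through.
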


\begin{proof}
Let $\alpha,\beta \in \nset \cup \{0\}$. For the present, allow the
possibility that $\alpha$ and $\beta$ are equal.

Let $w \in A^+$ be some word representing
\begin{equation}
\label{eq:virtabel1}
s = \elt{ab^\alpha cd^\beta e} = [\id,\;(\alpha+2,\beta+2,0,0,0,0,1,1,1,\alpha-\beta,0)].
\end{equation}
Let $A' = \{a,c,e,f,h,j\}$; observe that $A'$ consists of exactly
those elements of $A$ representing elements with non-zero seventh,
eighth, and ninth $\zset$-components, which are also exactly those
that have non-identity $\symgroup{11}$-components. Let $A'' = A - A' =
\{b,d,g,i\}$ observe that $A''$ consists of exactly those elements of
$A$ representing elements with non-zero tenth and eleventh
$\zset$-components, which are also exactly those that have identity
$\symgroup{11}$-components.

First, consider which letters from $A'$ can appear in $w$. Examining
the seventh, eighth, and ninth $\zset$-components (which are
unaffected by the actions of any of the $\symgroup{11}$-components),
shows that $w$ contains one letter $a$ or letter $e$, one letter $c$
or letter $h$, and one letter $e$ or letter $j$, and no other letter
from $A'$. For the product of the $\symgroup{11}$-components to be
$\id$, the letters from $A'$ in $w$ must then be $a,c,e$ or $f,h,j$
(in some order).

Consider these two cases separately:
\begin{enumerate}

\item Suppose first that the letters from $A'$ in $w$ are
  $a,c,e$. Since the $\symgroup{11}$-components of
  $\elt{a},\elt{c},\elt{e}$ do not affect the fifth and sixth
  $\zset$-components, and since these are both $0$ in $s$, $w$ cannot
  contain letters $g$ or $h$. So $w$ is a rearrangement of
  $aceb^\gamma d^\delta$ for some $\gamma,\delta \in
  \nset\cup\{0\}$. Now, in $w$ the letter $a$ must precede the letter
  $c$, for otherwise the third $\zset$-component of $\elt{w}$ would be
  non-zero. Similarly, $c$ must precede $e$, for otherwise the fourth
  $\zset$-component of $\elt{w}$ would be non-zero. The letters $b$
  must all lie between $a$ and $c$, for otherwise the third
  $\zset$-component of $\elt{w}$ would be non-zero, and similarly the
  letters $d$ must all lie between $c$ and $e$, for otherwise the
  fourth $\zset$-component of $\elt{w}$ would be non-zero. So $w =
  ab^\gamma cd^\delta e$. Examining the first and second
  $\zset$-components forces $\gamma = \alpha$ and $\delta=\beta$. So
  if the letters from $A'$ in $w$ are $a,c,e$, then $w = ab^\alpha
  cd^\beta e$.

\item Suppose now that the letters from $A'$ in $w$ are $f,h,j$. Since
  the $\symgroup{11}$-components of $\elt{f},\elt{h},\elt{j}$ do not
  affect the third and fourth $\zset$-components, and since these are
  both $0$ in $s$, $w$ cannot contain letters $b$ or $c$. So $w$ is a
  rearrangement of $fhjg^\gamma i^\delta$ for some $\gamma,\delta \in
  \nset\cup\{0\}$. Now, in $w$, the letter $f$ must precede the letter
  $h$, for otherwise the fifth $\zset$-component of $\elt{w}$ would be
  non-zero. Similarly, $h$ must precede $j$, for otherwise the sixth
  $\zset$-component of $\elt{w}$ would be non-zero. The letters $g$
  must all lie between $f$ and $h$, for otherwise the fifth
  $\zset$-component of $\elt{w}$ would be non-zero, and similarly the
  letters $i$ must all lie between $h$ and $j$, for otherwise the sixth
  $\zset$-component of $\elt{w}$ would be non-zero. So $w = fg^\gamma
  hi^\delta j$. Examining the first and second $\zset$-components
  forces $\gamma = \alpha$ and $\delta=\beta$. So if the letters from
  $A'$ in $w$ are $f,h,j$, then $w = fg^\alpha hi^\beta j$. In this case,
\[
\elt{w} = [\id,\;(\alpha+2,\beta+2,0,0,0,0,1,1,1,0,\alpha-\beta)].
\]
By \eqref{eq:virtabel1}, this forces $\alpha = \beta$, and $w =
fg^\alpha hi^\alpha j$.
\end{enumerate}
So if $\alpha \neq \beta$, only the first case holds and $w =
ab^\alpha cd^\beta e$. If, on the other hand, $\alpha = \beta$, then
both cases can hold and $w$ is either $ab^\alpha cd^\alpha e$ or
$fg^\alpha hi^\alpha j$.

Parallel reasoning shows that if $\alpha \neq \beta$, the element
represented by $fg^\alpha hi^\beta j$ is represented by no other word
over $A$.
\end{proof}

By \fullref{Lemma}{lem:vabelreps}, the semigroup $S$ satisfies the
hypotheses of \fullref{Lemma}{lem:abcdefghijnonmarkov} and so is not Markov.
\end{example}

\section{Miscellaneous examples of Markov and non-Markov semigroups}
\label{sec:examples}

This section gathers miscellaneous examples to illustrate particular
aspects of the class of Markov semigroups.

First, here is an example of a non-Markov semigroup:

\begin{example}
Let $A = \{a,b,c,d\}$ and let
\[
\rel{S} = \{(ba,ab),(bc,aca), (acc,d)\} \cup \{(dx,d),(xd,d) : x \in A\}.
\]
The monoid presented by $\pres{A}{\rel{S}}$ does not admit a regular
language of unique representatives by
\cite[Example~4.6]{otto_infiniteconvergent}, and thus is not Markov.
\end{example}

Since free groups of finite rank are Markov (either by
\fullref{Proposition}{prop:rsmarkov} or as a corollary of
\cite[Proposition~9]{ghys_markov}) and indeed strongly Markov
(since they are hyperbolic; see
\cite[Th\'{e}or\`{e}me~13]{ghys_markov}), the
following example is worth noting:

\begin{example}
The free inverse monoid of rank $1$ is not Markov, because it admits
no regular language of unique normal forms over the generating set
\cite[Proof of Theorem~2.7]{cutting_normalforms}.
\end{example}

The Baumslag--Solitar groups play their customary r\^{o}le of being
pleasant and easy to understand but slightly eccentric. This is a
consequence of the following theorem of Groves:

\begin{theorem}[{\cite[Corollary in \S~1]{groves_minimallength}}]
\label{thm:bs}
There is no regular language of minimal-length representatives for the
Baumslag--Solitar groups
\[
\pres{a,t}{(t^{-1}at,a^p)},
\]
where $p > 1$ with respect to the alphabet $\{a,a^{-1},t,t^{-1}\}$.
\end{theorem}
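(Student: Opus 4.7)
The plan is to suppose $L$ is a regular language of unique minimal-length representatives for $\mathrm{BS}(1,p)$ over $\{a, a^{-1}, t, t^{-1}\}$ and derive a contradiction by forcing $L$ to contain, as a regular intersection, a non-regular set of ``matched'' exponents. The key geometric input I would establish first is the following fact about geodesics: since $a^{p^n} = t^{-n} a t^n$, the element $a^{p^n}$ admits a geodesic of length $2n+1$, and for $p$ large enough this geodesic is unique. I would verify this via the action of $\mathrm{BS}(1,p)$ on its Bass--Serre tree, in which any word representing $a^{p^n}$ must descend to tree-depth at least $n$ (since $a^{p^n}$ lies in the depth-$n$ horocycle of $\mathbb{Z}[1/p] \rtimes \mathbb{Z}$), and the cheapest such path uses $n$ letters $t^{-1}$, a single letter $a$, and $n$ letters $t$, with descent preceding ascent.

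Given this, for $p \geq 4$ the unique representative in $L$ of $a^{p^n}$ must be $t^{-n} a t^n$, so $L \supseteq \{t^{-n} a t^n : n \geq 0\}$. I would next form $K = L \cap (t^{-1})^{*} a\, t^{*}$, which is regular, and argue that $K$ equals $\{t^{-n} a t^n : n \geq 0\}$ exactly: any word $t^{-k} a t^j \in L$ with $k \neq j$ represents $a^{p^{\min(k,j)}} t^{j-k}$, and the same tree analysis exhibits a strictly shorter representative of this element, contradicting the minimality property required of $L$. Since $\{t^{-n} a t^n : n \geq 0\}$ fails the pumping lemma in the standard way, $K$ cannot be regular, which is the required contradiction.

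The main obstacle will be the case $p \in \{2, 3\}$, where $a^{p^n}$ has multiple geodesics: for $p = 3$, both $t^{-n} a t^n$ and $t^{-(n-1)} a^{3} t^{n-1}$ have length $2n+1$, while for $p = 2$ the word $t^{-(n-1)} a^{2} t^{n-1}$ of length $2n$ is strictly shorter than $t^{-n} a t^n$. In these cases $L$ is free to choose among several geodesic shapes, so the argument has to be refined by intersecting $L$ with a finite family of regular template languages covering every possible canonical form a minimal representative can assume, and then ruling out each template by a matched-exponent non-regularity argument analogous to the one for $(t^{-1})^{*} a\, t^{*}$ above.
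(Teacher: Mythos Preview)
The paper does not prove this theorem itself; it is quoted from Groves, so there is no in-paper proof to compare against. Nonetheless, your argument has a real gap in step~3. You claim that for $k\neq j$ the word $t^{-k}at^j$ admits a strictly shorter representative, so that $K=L\cap (t^{-1})^*at^*$ collapses to $\{t^{-n}at^n:n\geq 0\}$. This is false for $p\geq 4$: every word $t^{-k}at^j$ is in fact the \emph{unique} geodesic for the element $(p^k,j-k)\in\mathbb{Z}[1/p]\rtimes\mathbb{Z}$ that it represents. For instance with $p=4$, $k=1$, $j=3$, the word $t^{-1}at^3$ has length~$5$ and represents $a^4t^2$; any representative using no $t^{-1}$ needs at least four $a^{\pm1}$-letters plus two $t$-letters (length $\geq 6$), while one using two or more $t^{-1}$'s already has at least six $t^{\pm1}$-letters. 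The same depth-counting shows that for $p\geq 4$ the geodesic length of $(p^k,j-k)$ is exactly $k+j+1$, realised only by $t^{-k}at^j$. Hence $L$ must contain all of $(t^{-1})^*at^*$, so $K=(t^{-1})^*at^*$ is regular and no contradiction arises. (Your formula for the element represented by $t^{-k}at^j$ is also off when $k>j$: the element is $(p^k,j-k)$, not $a^{p^{\min(k,j)}}t^{j-k}$.)

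A smaller issue: the theorem as stated here does not assume uniqueness --- the authors remark explicitly that Groves' argument does not use it --- so your opening hypothesis targets a strictly weaker statement. A viable pumping approach would need a template that engages with the base-$p$ digit structure of general $a$-exponents, which is where the non-regularity actually lives; the family $\{a^{p^n}\}$ is too clean, since its geodesics sit inside a regular set.
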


[The original statement of this result by Groves is phrases in terms of
  minimal-length (unique) normal forms. However, the property of
  uniqueness is not used anywhere in the proof. Groves states the
  result in these terms because he places the result in the context of
  calculating growth series.]

\begin{example}
\label{ex:bs}
The Baumslag-Solitar group $\pres{a,t}{(t^{-1}at,a^2)}$ is presented
by the a confluent noetherian rewriting system
\cite[p.~156]{epstein_wordproc}, and is therefore Markov by
\fullref{Proposition}{prop:rsmarkov}. However, since it admits no
regular language of minimal-length representatives by
\fullref{Theorem}{thm:bs}, it is not strongly Markov. (However, it
does admit a one-counter language of minimal-length normal forms
\cite[\S\S~4--5]{elder_baumslagsolitar}.)
\end{example}

This example leads on to the following question:

\begin{question}
Is every one-relation semigroup Markov?
\end{question}

If every one-relation semigroup can be presented by a confluent
noetherian rewriting system (an open question, since it would imply a
solution to the world problem), this question would have a positive
answer by \fullref{Proposition}{prop:rsmarkov}.

A robustly Markov monoid may not be residually finite:

\begin{example}
Let $A = \{a,b\}$ and let $\rel{R} = \{(ab^2,b)\}$. Then $(A,\rel{R})$
is a confluent noetherian rewriting system and so the monoid $M$
presented by $\pres{A}{\rel{R}}$ is Markov by
\fullref{Proposition}{prop:rsmarkov}. This monoid $M$ is known to be
non-residually finite~\cite{lallement_onerelmon}.
\end{example}

A strongly Markov monoid may not be finitely presented:

\begin{example}
Let $A = \{a,b,c,d,e,f\}$ and $\rel{R} = \{(ab^nc, de^nf) : n \in
\nset\}$. Let $M$ be the monoid presented by $\pres{A}{\rel{R}}$. Then
$M$ is not finitely presented since no relation in $\rel{R}$ can be
deduced from the others. But $M$ is strongly Markov: since every
generators in $\elt{A}$ is indecomposable, any alphabet representing a
generating set for $M$ must contain a subalphabet representing
$\elt{A}$; thus $A^* - A^*ab^*cA^*$ is a robust Markov language for
$M$ over any alphabet representing a generating set.
\end{example}

This example suggests the following question:

\begin{question}
Does there exist a strongly Markov group that is not finitely
presented? If not, does there exists a non-finitely presented Markov
or robustly Markov group? [The authors conjecture that the answers to
  these questions are both yes, for intuition suggests that a Markov
  or robust Markov language does not impose enough structure on a
  group to guarantee finite presentability.]
\end{question}

The following easy example shows that it is possible for a robustly
Markov monoid to have unsolvable word problem:

\begin{example}
\label{ex:wpunsolvable}
Let $I$ be a non-recursive subset of $\nset$. Let $A = \{a,b,c,x,y\}$ and
\[
\rel{R} = \{(ab^\alpha c,x) : \alpha \in I\} \cup \{(ab^\alpha c,y) : \alpha \notin I\}.
\]
The rewriting system $(A,\rel{R})$ is confluent because left-hand
sides of rules in $\rel{R}$ overlap only when they are identical. It
is noetherian because it is length-reducing. The language of left-hand
sides of rules in $\rel{R}$ is
\[
\{ab^\alpha c : \alpha \in I\} \cup \{ab^\alpha c : \alpha \notin I\} = ab^*c
\]
and so is regular. By \fullref{Proposition}{prop:rsmarkov}, $L$ is a
robust Markov language for the monoid presented by
$\pres{A}{\rel{R}}$.

However, this monoid does not have solvable word problem, since
$ab^\alpha c$ and $x$ represent the same element of the semigroup if
and only if $\alpha \in I$. But membership of $I$ is undecidable since
$I$ is non-recursive.
\end{example}

However, a \emph{finitely presented} Markov semigroup will have
soluble word problem, as does any finitely presented semigroup that
admits a recursively enumerable language of unique representatives
\cite[Theorem~1.5]{cutting_normalforms}.

\section{Hyperbolicity \& Automaticity}
\label{sec:hyperbolicity}

Ghys et al.\ proved that hyperbolic groups are Markov using a direct
approach~\cite[\S 3]{ghys_markov}. It also follows using the machinery
of automatic groups: over any generating set, the language of
geodesics is regular and forms part of a prefix-closed automatic
structure \cite[Theorem~3.4.5]{epstein_wordproc}, and the construction
of an automatic structure with uniqueness
\cite[Theorem~2.5.1]{epstein_wordproc} preserves prefix-closure when
applied in this particular case (although \emph{not} in the general
case).

Hyperbolicity can be generalized from groups to semigroups in either a
geometric or linguistic sense. The latter generalization, which is
termed \defterm{word-hyperbolicity}, is due to Duncan \& Gilman
\cite{duncan_hyperbolic}. It informally says that a semigroup is
word-hyperbolic if it admits a regular language of representatives
such that the multiplication table in terms of these representatives
is a context-free language. 

\begin{definition}
\label{def:wordhyp}
A \defterm{word-hyperbolic structure} for a semigroup $S$ is a pair
$(A,L)$, where $A$ is a finite alphabet representing a generating set
for $S$ and $L$ is a regular language over $A$ such that $\elt{L} = S$
and the language
\[
M(L) = \{u\#_1v\#_2w^\rev : u,v,w \in L \land \elt{uv} = \elt{w}\}
\]
(where $\#_1$ and $\#_2$ are new symbols not in $A$) is context-free.

A semigroup is \defterm{word-hyperbolic} if it admits a
word-hyperbolic structure.
\end{definition}

A group is word-hyperbolic in the sense of
\fullref{Definition}{def:wordhyp} if and only if it is hyperbolic in
the sense of Gromov \cite[Corollary~4.3]{duncan_hyperbolic}. For
further background information on word-hyperbolic semigroups, see
\cite{duncan_hyperbolic,hoffmann_relatives}.

The following example is taken from
\cite[Example~4.2]{cm_wordhypunique}:

\begin{example}
\label{ex:wordhypnotmarkov}
Let $A = \{a,b,c,d\}$ and let $\rel{R} = \{(ab^\alpha c^\alpha
d,\emptyword) : \alpha \in \nset\}$. Let $M$ be the monoid presented
by $\pres{A}{\rel{R}}$. Since the rewriting system $(A,\rel{R})$ is
context-free, $M$ is word-hyperbolic by
\cite[Theorem~3.1]{cm_wordhypunique}. The reasoning in
\cite[Example~4.2]{cm_wordhypunique} shows that it does not admit a
regular language of unique normal forms over any generating set, and
so in particular cannot be Markov by
\fullref{Proposition}{prop:markovchangegen}.
\end{example}

Thus word-hyperbolic monoids are not in general Markov. Moreover if
the regularity condition on the left-hand sides of rewriting rules in
\fullref{Proposition}{prop:rsmarkov} is weakened to being context-free
(or even just to being one-counter), then the semigroups or monoids
thus presented are not Markov in general.

\fullref{Example}{ex:wordhypnotmarkov} is not finitely presented, and
it does not admit a word-hyperbolic structure with uniqueness
\cite[Example~4.2]{cm_wordhypunique}. This provokes the following questions:

\begin{question}
Does there exist a non-Markov finitely presented word-hyperbolic
monoid?
\end{question}

\begin{question}
Does there exist a non-Markov monoid that admits a word-hyperbolic
structure with uniqueness? 
\end{question}

Since satisfying a linear isoperimetric inequality is one of several
equivalent characterizations of hyperbolic groups (see, for example,
\cite[Ch.~1]{alonso_wordhyperbolic}), the following question is of
interest:

\begin{question}
Does there exist a non-Markov semigroup with linear isoperimetric
inequality?
\end{question}

Markov groups are not in general automatic, since all polycyclic
groups are Markov \cite[Corollaire~11]{ghys_markov}, but a
nilpotent group that is not virtually abelian cannot be automatic
\cite[Theorem~8.2.8]{epstein_wordproc}.

\begin{question}
Are automatic semigroups Markov? (Note that, unlike the situation for
groups, an automatic semigroup need not be word-hyperbolic.) This
question relates to the long-standing open question of whether an
automatic semigroup or group admits a prefix-closed automatic
structure with uniqueness \cite[Open
  Question~2.5.10]{epstein_wordproc}. Admitting such an automatic
structure entails being Markov.
\end{question}

\section{Adjoining an identity or zero}
\label{sec:adjoin}

This section and those that follow examines the interaction of the
classes of Markov, robustly Markov, and strongly Markov semigroups
with various semigroup constructions. The main questions are whether
these classes of semigroups are closed under a particular
construction, and whether the semigroup resulting from such a
construction being Markov, robustly Markov, or strongly Markov implies
that the original semigroup is (or the original semigroups are)
Markov, robustly Markov, or strongly Markov.

Arguably the simplest semigroup construction are the adjoining of an
identity or zero, and it is reassuring that both questions have
positive answers for these constructions:

\begin{proposition}
\label{prop:adjone}
Let $S$ be a semigroup. Then:
\begin{enumerate}
\item $S$ is Markov if and only if $S^1$ is Markov.
\item $S$ is robustly Markov if and only if $S^1$ is robustly Markov.
\item $S$ is strongly Markov if and only if $S^1$ is strongly Markov.
\end{enumerate}
\end{proposition}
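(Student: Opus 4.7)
The plan is to perform simple surgery on Markov languages: add a single letter for the identity when passing from $S$ to $S^1$, and erase letters that represent the identity when passing from $S^1$ to $S$. The case $1 \in S$ is trivial, since then $S^1 = S$; so throughout I assume $S$ has no identity, giving $S^1 = S \cup \{1\}$ with $1 \notin S$.

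For the forward direction of each of~(1) and~(2), given a (robust) semigroup Markov language $L$ for $S$ over $A$, I would introduce a fresh letter $1$ with $\elt{1} = 1_{S^1}$ and take $K = L \cup \{1\}$ over $A \cup \{1\}$. This $K$ is regular, $+$-prefix-closed, omits $\emptyword$, and maps bijectively onto $S^1$; for the robust version, $\lambda_{A \cup \{1\}}(1_{S^1}) = 1 = |1|$ and $\lambda_{A \cup \{1\}}(s) = \lambda_A(s)$ for all $s \in S$, since inserting letters that represent $1$ never shortens a word, so length-minimality carries over.

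For the converse of~(1), I would invoke \fullref{Proposition}{prop:markovsgmon} to obtain a monoid Markov language $L$ for $S^1$ over some alphabet $B$, set $A = \{b \in B : \elt{b} \neq 1\}$, and apply the letter-erasing homomorphism $\pi : B^* \to A^*$ that sends the deleted letters to $\emptyword$. Then $\pi(L)$ is regular (homomorphic image of a regular language), prefix-closed (each prefix of $\pi(v)$ is the $\pi$-image of a prefix of $v$), and maps bijectively onto $S^1$ (since $\pi$ preserves the represented element and uniqueness in $L$ transfers to $\pi(L)$); deleting $\emptyword$ yields a semigroup Markov language for $S$ over $A$. One must also note that $A$ generates $S$ as a semigroup, which holds because any non-identity element of $S^1$ requires at least one non-$1$ letter to be represented. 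For the converse of~(2), the representative of $1_{S^1}$ in a robust Markov language $L$ for $S^1$ over $B$ must be a single letter $b_0$ with $\elt{b_0} = 1$; crucially, no word of $L$ can contain \emph{any} letter representing $1$, since deleting such a letter would produce a strictly shorter representative of the same element, contradicting length-minimality. Hence $L \setminus \{b_0\} \subseteq A^+$, and uniqueness combined with $+$-prefix-closure of $L$ forbid any word of $L$ from having $b_0$ as a proper prefix, so $L \setminus \{b_0\}$ remains $+$-prefix-closed, length-minimal (with $\lambda_A = \lambda_B$ on $S$ by the same erasure argument), and in bijection with $S$.

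Part~(3) then follows from~(2) by quantifying over alphabets: any semigroup generating alphabet $B$ for $S^1$ must contain at least one letter $b_0$ with $\elt{b_0} = 1$, the subalphabet $A = \{b \in B : \elt{b} \neq 1\}$ generates $S$, and the two constructions above convert robust Markov structures between $A$ and $B$. The main technical delicacy I anticipate lies in the converse of~(2): one must use length-minimality to exclude \emph{every} letter representing $1$ from appearing in any word of $L$ (not merely the chosen $b_0$), handling the case where $B$ contains several such redundant letters, and then verify that the resulting language sits over $A$ and preserves all required properties.
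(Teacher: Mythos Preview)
Your proposal is correct. The forward directions and the converses of parts~(2) and~(3) coincide with the paper's arguments almost verbatim: the paper likewise sets $K = L \cup \{1\}$ going up, and for the robust converse it too observes that no word of a robust Markov language for $S^1$ (other than the one-letter representative of the identity) can contain an identity letter, so that deleting that single word leaves a robust semigroup Markov language for $S$ over $A$.

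The converse of part~(1) is where you genuinely diverge. The paper stays with a \emph{semigroup} Markov language $L$ for $S^1$ over $B = A \cup \{1\}$ and first proves the structural fact that the identity letter can only occur as the \emph{first} letter of any word of $L$ (since if $w = w'1w'' \in L$ with $w' \neq \emptyword$, then $w'$ and $w'1$ both lie in $L$ by $+$-prefix-closure and represent the same element), after which it strips that leading letter by hand via
\[
K = \bigl((L - \{1\}) - 1A^*\bigr) \cup \{u \in A^+ : 1u \in L\},
\]
arguing as in \fullref{Proposition}{prop:markovsgmon}. You instead first pass to a \emph{monoid} Markov language via \fullref{Proposition}{prop:markovsgmon} and then kill all identity letters at once with the erasing homomorphism $\pi$. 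Your route costs an extra invocation of \fullref{Proposition}{prop:markovsgmon} but buys a cleaner treatment of the possibility that $B$ contains several distinct letters representing the identity --- a case the paper's notation ``$B = A \cup \{1\}$'' quietly elides, and exactly the technical delicacy you anticipated.
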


\begin{proof}
Let $A$ be a finite alphabet representing a semigroup generating set
for $S$. Let $1$ be a new symbol not in $A$ representing the adjoined
identity of $S^1$.

Let $L$ be a semigroup Markov language for $S$ with respect to
$A$. Then $L$ is regular, $+$-prefix-closed, and maps bijectively onto
$S$. Let $K = L \cup \{1\}$. Then $K$ is regular, $+$-prefix-closed,
and maps bijectively onto $S^1$. Thus $K$ is a semigroup Markov
language for $S^1$.

Furthermore, if $L$ is a robust semigroup Markov language, then so is
$K$, since $1$ is the unique shortest word representing the
adjoined identity, and the natural lengths of elements in $S$ over $A$
and over $A\cup\{1\}$ are equal.

\smallskip
Now let $L$ be a semigroup Markov language
for $S^1$ over an alphabet $B$ representing some generating set for
$S^1$. Now, $B$ must be of the form $A \cup \{1\}$, where $1$
represents the adjoined identity and $A$ represents a generating set
for $S$, since no product of elements of $S$ equals the adjoined
identity.

Suppose some $w \in L$ contains the symbol $1$. Then $w = w'1w''$ and
so $w'$ and $w'1$ represent the same element of $S^1$, unless $w'$ is
the empty word, which is not a member of the semigroup Markov
language $L$. So such a word $w$ can only contain a single instance
of the symbol $1$, and it must be the first symbol of $w$. (If $L$ is
a robust semigroup Markov language, the only such word is $w=1$, since
otherwise $w'w''$ would be a shorter word representing $\elt{w}$, as
in the proof of \fullref{Proposition}{prop:stronglymarkovsgmon}.)

Let
\[
K = \Big(\big(L - \{1\}\big) - 1A^*\Big) \cup \{u \in A^+ : 1u \in L\}.
\]
Arguing as in the proof of \fullref{Proposition}{prop:markovsgmon}, it
follows that $K$ is $+$-prefix-closed, is regular, and contains a
unique representative for each element of $S$. Thus $K$ is a semigroup
Markov language for $S$ over the alphabet $A$.

Furthermore, if $L$ is a robust semigroup Markov language, the only
word in $L$ containing the symbol $1$ is the word $1$ itself, so in this case
\[
K = L - \{1\}.
\]

\smallskip
From these arguments, it follows that $S$ is Markov if and only if
$S^1$ is Markov and that $S$ is robustly Markov if and only if $S^1$ is
robustly Markov. From the arbitrary choice of generating sets, and the
fact that any alphabet representing a generating set for $S^1$ must be
of the form $A \cup \{1\}$, where $1$ represents the adjoined identity
and $A$ represents a generating set for $S$, it follows that $S$ is
strongly Markov if and only if $S^1$ is strongly Markov.
\end{proof}

\begin{proposition}
\label{prop:adjzero}
Let $S$ be a semigroup. Then:
\begin{enumerate}
\item $S$ is Markov if and only if $S^0$ is Markov.
\item $S$ is robustly Markov if and only if $S^0$ is robustly Markov.
\item $S$ is strongly Markov if and only if $S^0$ is strongly Markov.
\end{enumerate}
\end{proposition}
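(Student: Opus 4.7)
The plan is to mirror the proof of \fullref{Proposition}{prop:adjone} for adjoining an identity, exploiting the fact that the adjoined zero is absorbing rather than neutral. A preliminary observation underpins everything: any alphabet $B$ representing a generating set for $S^0$ must decompose as a disjoint union $B = A \cup B_0$, where $A$ represents a generating set for $S$ and $B_0 \neq \emptyset$ consists of those letters representing the adjoined zero. Indeed, $B_0$ must be non-empty because $0$ is produced only by products involving the zero itself, and $A$ must generate $S$ because non-zero elements (being exactly the elements of $S$) cannot be represented by any word containing a letter from $B_0$.

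For the forward implications, given a (robust) semigroup Markov language $L$ for $S$ over $A$, let $0$ be a fresh symbol and set $K = L \cup \{0\}$. Then $K$ is $+$-prefix-closed, regular, and maps bijectively onto $S^0$. Robustness is preserved because $|0| = 1$ is the minimum length for representing the zero over $A \cup \{0\}$ and $\lambda_A(s) = \lambda_{A \cup \{0\}}(s)$ for every $s \in S$. For the reverse implications, let $L$ be a (robust) Markov language for $S^0$ over $B = A \cup B_0$ and let $w \in L$ be the unique representative of $0$. Any word in $L$ containing a letter from $B_0$ represents $0$ and must therefore equal $w$ by uniqueness; consequently $L \setminus \{w\} \subseteq A^+$. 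Moreover $L \setminus \{w\}$ remains $+$-prefix-closed, for if a non-empty proper prefix of some $u \in L \setminus \{w\}$ were equal to $w$ then $u$ would have $w$ as a prefix and so represent $0$, forcing $u = w$, a contradiction. Hence $K = L \setminus \{w\}$ is a (robust) semigroup Markov language for $S$ over $A$; robustness transfers because every shortest $B$-representative of $s \in S$ necessarily lies in $A^+$, so $\lambda_B(s) = \lambda_A(s)$.

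The strongly Markov equivalence then follows by combining these constructions with the preliminary observation: every alphabet representing a generating set for $S$ extends to one for $S^0$ by adjoining a single fresh zero symbol, and every alphabet representing a generating set for $S^0$ restricts to one for $S$ by deleting its (non-empty) set of zero-representing letters. The principal point requiring care is the preliminary observation on alphabet decompositions; the remaining arguments are routine and in fact somewhat simpler than in the identity case, because the absorbing property of zero avoids the prefix-substitution bookkeeping needed in \fullref{Proposition}{prop:adjone}.
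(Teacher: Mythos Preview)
Your proof is correct and follows essentially the same approach as the paper's: adjoin a single letter for the forward direction, and for the reverse direction remove the unique representative $w$ of the adjoined zero and check that $L \setminus \{w\}$ remains $+$-prefix-closed and lies in $A^+$. Your treatment is in fact slightly more careful than the paper's in one respect: you allow the alphabet $B$ for $S^0$ to contain a \emph{set} $B_0$ of zero-representing letters, whereas the paper writes ``$B$ must be of the form $A \cup \{0\}$'', implicitly assuming a single such letter --- harmless for parts~(1) and~(2), where the alphabet may be chosen, but a small gap for part~(3), which you close.
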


\begin{proof}
By reasoning parallel to the proof \fullref{Proposition}{prop:adjone},
substituting $0$ for $1$ and $S^0$ for $S^1$ as appropriate, it
follows that if $L$ is a [robust] Markov language for $S$,
then $L\cup \{0\}$ is a [robust] Markov language for $L$.

\smallskip
Now let $L$ be a Markov language
for $S^0$ over an alphabet $B$ representing some generating set for
$S^0$. Now, $B$ must be of the form $A \cup \{0\}$, where $0$
represents the adjoined zero and $A$ represents a generating set
for $S$, since no product of elements of $S$ equals the adjoined
zero.

Suppose some $w \in L$ contains the symbol $0$, with $w =
w'0w''$. Then $w'0$ and $w$ both represent the zero of the semigroup,
which contradicts the uniqueness of representatives in $L$ unless
$w''$ is the empty word. So such a word $w$ can contain only a
single symbol $0$, and this must be the last letter of the word. (If
$L$ is a robust Markov language, the only such word is $w=0$
since this is the unique shortest word over $A \cup \{0\}$
representing the adjoined zero.) Notice that there can only be one
such word, since any other word containing the symbol $0$ would also
represent the adjoined zero. So $L$ contains a unique word $w =
w'0$ containing the symbol $0$, and this word is not the prefix of any
other word in $L$.

Let $K = L - \{w'0\}$. Then $K$ is $+$-prefix-closed (since $w'0$ is
not a prefix of any other word in $L$), is regular, and contains a
unique representative for each element of $S$. Finally, $K \subseteq
A^+$ by the observation at the end of the last paragraph. Thus $K$ is
a Markov language for $S$ over the alphabet $A$.

\smallskip
From these arguments, it follows that $S$ is Markov if and
only if $S^0$ is Markov and that $S$ is robustly Markov if and
only if $S^0$ is robustly Markov. From the arbitrary choice of
generating sets, and the fact that any alphabet representing a
generating set for $S^0$ must be of the form $A \cup \{0\}$, it
follows that $S$ is strongly Markov if and only if $S^1$ is
strongly Markov.
\end{proof}

\section{Direct products}
\label{sec:dirprod}

The class of Markov groups is closed under direct products, as a
special case of the fact that an extension of one Markov group by
another is also Markov \cite[Proposition~10]{ghys_markov}. For
monoids, the result is also positive:

\begin{theorem}
\label{thm:dirprodmon}
\begin{enumerate}
\item If $M$ and $N$ are Markov monoids, then $M \times N$ is a Markov
monoid. 
\item If $M$ and $N$ are robust Markov monoids, then $M \times N$ is a
  robust Markov monoid.
\end{enumerate}
\end{theorem}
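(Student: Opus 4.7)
The plan is to exhibit an explicit monoid Markov language for $M \times N$ as the concatenation of the given Markov languages for the two factors. I would fix monoid Markov languages $L_M \subseteq A^*$ and $L_N \subseteq B^*$ for $M$ and $N$, where $A$ and $B$ are finite alphabets representing monoid generating sets; by relabelling one may assume that $A \cap B = \emptyset$. The set $A \cup B$, with $a \in A$ identified with $(\elt{a},1_N)$ and $b \in B$ with $(1_M,\elt{b})$, then represents a monoid generating set for $M \times N$, and my candidate Markov language over $A \cup B$ is $L = L_M L_N$.

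The verification would proceed by checking the three defining properties of $L$. Regularity is immediate from closure under concatenation. Prefix-closure reduces to a case analysis on whether a prefix $u$ of some $w_mw_n \in L$ (with $w_m \in L_M$ and $w_n \in L_N$) falls inside $w_m$ or extends past it, using that $L_M$ and $L_N$ are prefix-closed and both contain $\emptyword$. For the bijection onto $M \times N$, the crucial point is that disjointness of $A$ and $B$ forces $L \subseteq A^* B^*$ and makes the factorization $w = w_m w_n$ of each $w \in L$ unique. Combined with the fact that letters from $A$ commute with letters from $B$ in $M \times N$, such a word represents the pair whose first coordinate is $\elt{w_m}$ in $M$ and whose second is $\elt{w_n}$ in $N$; surjectivity and injectivity then follow from the analogous properties of $L_M$ and $L_N$.

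For the robust case, I would additionally show that $\lambda_{A \cup B}((m,n)) = \lambda_A(m) + \lambda_B(n)$ for every $(m,n) \in M \times N$. The argument is that any word over $A \cup B$ representing $(m,n)$ can be reordered, without changing either its length or the element it represents, so that every $A$-letter precedes every $B$-letter; this uses precisely the commutation between the two subalphabets inside $M \times N$. The $A$-part and the $B$-part of the reordered word must then represent $m$ and $n$ respectively, so their lengths are at least $\lambda_A(m)$ and $\lambda_B(n)$. Hence if $w_m$ and $w_n$ are length-minimal representatives in their respective languages, $w_m w_n$ is a length-minimal representative of $(m,n)$ in $L$.

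The main obstacle is reconciling prefix-closure with uniqueness of representatives in a single language over the merged alphabet. The disjointness of $A$ and $B$ is the key device: it pins down the factorization point between the $L_M$-part and the $L_N$-part of each word in $L$, and this is what simultaneously yields uniqueness of representatives and ensures that the prefix-closure argument goes through cleanly. Without disjointness, a single word could factor in several ways as $w_m w_n$, and both properties would fail.
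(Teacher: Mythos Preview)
Your proposal is correct and follows essentially the same approach as the paper: form the concatenation $L_M L_N$ over the disjoint union $A \cup B$, with $A$-letters representing $(\elt{a},1_N)$ and $B$-letters representing $(1_M,\elt{b})$, and verify that this is a (robust) monoid Markov language for $M \times N$ using the length identity $\lambda_{A \cup B}((m,n)) = \lambda_A(m) + \lambda_B(n)$. You supply considerably more detail than the paper does (the paper simply asserts regularity, prefix-closure, bijectivity, and the length formula without justification), but the underlying construction and argument are identical.
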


\begin{proof}
\begin {enumerate}

\item Let $A$ and $B$ be finite alphabets representing monoid generating
sets for $M$ and $N$ with representation maps $\phi_A : A \to M$ and
$\phi_B : B \to N$, respectively, and let $K$ and $L$ be monoid Markov
languages over $A$ and $B$ for $M$ and $N$, respectively. Then $H =
KL$ is prefix-closed, regular, and maps bijectively onto $M \times N$
under the representation map $\phi : A \cup B \to M \times N$ defined
by $a \mapsto (a\phi_A,1_N)$ and $b \mapsto (1_M,b\phi_B)$.

\item Proceed as in the previous part, but with $K$ and $L$ being
  robust Markov languages. Then $KL$ is a robust Markov language for
  $M \times N$ since (with respect to the representation map $\phi$)
  $\lambda_{A \cup B}(\elt{uv}) = \lambda_A(\elt{u}) +
  \lambda_B(\elt{v})$ for all $u \in K$ and $v \in L$.\qedhere
\end {enumerate}
\end{proof}

However, for semigroups the situation is obscure. First of all, a
direct product of finitely generated semigroups is not necessarily
finitely generated. For example, the direct product of two copies of
the natural numbers $\nset$ (excluding $0$) is not finitely
generated. (Notice that $\nset$ is strongly Markov.) Even when the
direct product is finitely generated, the relationship of a finite
generating set to the finite generating sets of the direct factors is
complex; see the discussion in \cite[\S~2]{robertson_dirprod}. It is
possible to prove that a direct product of a Markov semigroup and a
finite semigroup is Markov if it is finitely generated
(\fullref{Theorem}{thm:markovdirprodfinite} below). The general idea
of the proof is similar to that used by Campbell et al.\ to prove the
analogous result for automatic semigroups
\cite[Theorem~1.1(ii)]{campbell_dirprodautsg}, but more sophisticated
reasoning is required here to ensure that prefix-closure and
uniqueness are preserved. However, the issue of prefix-closure seems
to make it impossible to adapt and strengthen the idea used by
Campbell et al.\ for direct products of infinite semigroups. An
entirely new approach may be required in this case.

\begin{theorem}
\label{thm:markovdirprodfinite}
Let $S$ be a Markov semigroup and let $T$ be finite. Then $S \times T$
is a Markov semigroup if and only if it is finitely generated.
\end{theorem}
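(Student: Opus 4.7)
One direction is immediate: any Markov semigroup is finitely generated, since by definition it admits a Markov language over a finite alphabet representing a generating set. For the converse, assume $S \times T$ is finitely generated. The case where $S$ is finite is trivial ($S \times T$ is then finite and hence Markov via the alphabet with one letter per element and language of single-letter words), so suppose $S$ is infinite. Let $(A, L_S)$ be a Markov structure for $S$, and write $w_s \in L_S$ for the unique representative of $s \in S$. The plan is to take as alphabet $B = A \times T$, in which $(a, \tau)$ represents $(\elt{a}, \tau) \in S \times T$, and to exhibit a $+$-prefix-closed regular language $K \subseteq B^+$ of unique representatives. The key structural observation is that, writing $P_n(T)$ for the set of $n$-fold products in $T$, grouping the first two factors of any $(n+1)$-fold product shows $P_{n+1}(T) \subseteq P_n(T)$, so this descending chain in the finite set $T$ stabilises at some $K_*$. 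A counting argument using a finite generating set $D$ of $S \times T$ forces $K_* = T$: since $S$ is infinite, for each $t \in T$ the family $\{(s, t) : s \in S\}$ requires expressions over $D$ of unbounded length, so $t$ lies in arbitrarily long $n$-fold products of the $T$-projection of $D$ and hence in $K_*$. Therefore $P_n(T) = T$ for every $n \geq 1$.

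Fix maps $p, \ell \colon T \to T$ with $p(t) \ell(t) = t$ for every $t \in T$ (which exist since $P_2(T) = T$), and define, for each $n \geq 1$ and $t \in T$,
\[
\phi_n(t) = \bigl(p^{n-1}(t),\; \ell(p^{n-2}(t)),\; \ldots,\; \ell(p(t)),\; \ell(t)\bigr) \in T^n,
\]
whose entries multiply to $t$ and whose $j$-prefix equals $\phi_j(p^{n-j}(t))$. Let $K$ consist of those words $(a_1, \tau_1) \cdots (a_n, \tau_n) \in B^+$ satisfying $a_1 \cdots a_n \in L_S$ and $(\tau_1, \ldots, \tau_n) = \phi_n(\tau_1 \cdots \tau_n)$. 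Uniqueness of representatives for $S \times T$ holds because $w_s$ is determined by $s$ and $\phi_n(t)$ is determined by $t$; and $+$-prefix-closure follows because the $j$-prefix of the representative of $(s, t)$ represents $(\elt{a_1 \cdots a_j},\, p^{n-j}(t))$, whose $L_S$-representative is $a_1 \cdots a_j$ by $+$-prefix-closure of $L_S$, and whose canonical $T$-sequence is $\phi_j(p^{n-j}(t))$ by the prefix compatibility of $\phi$ above, matching exactly.

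For regularity, the sequences $\phi_n(t)$ are precisely those accepted by the finite state automaton over $T$ with initial state $q_0$ and accept states $T$, which transitions from $q_0$ on reading $\tau \in T$ to state $\tau$, and from state $t' \in T$ on reading $\tau$ to state $t' \tau$ precisely when $p(t' \tau) = t'$ and $\ell(t' \tau) = \tau$; a product-automaton construction with an automaton for $L_S$ then recognises $K$. The main obstacle throughout is the simultaneous preservation of prefix-closure, uniqueness, and regularity, and here the locally checkable $(p, \ell)$ factoring rule is precisely what renders the canonical $T$-sequence both regular and globally prefix-compatible, so that $+$-prefix-closure is inherited from $L_S$.
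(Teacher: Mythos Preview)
Your proof is correct and takes a genuinely different route from the paper's. Both arguments use the alphabet $A\times T$ and hinge on the fact that $T^2=T$ (equivalently $T^n=T$ for all $n$); the paper quotes this as \cite[Lemma~2.3]{robertson_dirprod}, while you re-derive it from the infinitude of $S$. The divergence is in how the canonical length-$n$ factorisation of each $t\in T$ is chosen. The paper takes, for each $t$ and $n$, the \emph{lexicographically minimal} word of length $n$ over $T$ representing $t$; $+$-prefix-closure then follows because lex-minimality passes to prefixes, and regularity is obtained via a synchronous rational relation. You instead fix splitting maps $p,\ell$ with $p(t)\ell(t)=t$ and build the factorisation $\phi_n(t)$ recursively, so that prefix-compatibility is hard-wired into the definition and regularity is witnessed by an explicit automaton on state set $\{q_0\}\cup T$. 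Your construction is more overtly algorithmic and yields a concrete small automaton; the paper's lex-minimal choice is closer in spirit to standard normal-form constructions and avoids the need to fix auxiliary maps. Both approaches are of comparable length and neither is obviously stronger.
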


\begin{proof}
One direction of the result is trivial: if $S \times T$ is a Markov
semigroup, then by definition it is finitely generated.

Suppose that $S \times T$ is finitely generated. Then by
\cite[Lemma~2.3]{robertson_dirprod}, the finite semigroup $T$ is
such that $T^2 = T$.

Since $S$ is a Markov semigroup, it admits a Markov language $L$ over
some finite alphabet $A$ representing a generating set for $S$.

Let $B$ be a finite alphabet in bijection with $T$.  Since $T^2 = T$,
it follows that, $T^n = T$ for all $n \in \nset$ and so for any $t \in T$ and $n \in
\nset$, there is word of length $n$ over $B$ representing $t$. Let
\[
R = \bigl\{(u,v) : u,v \in B^+, |u| = |v|, \elt{u} = \elt{v}\bigr\};
\]
notice that $R$ is a synchronous rational relation. Let
$\lex\sqsubset$ be the lexicographic ordering on $B^+$ based on some
total ordering of $B$. Then
\[
R' = \bigl\{ u : (\forall v \in A^*)((u,v) \in R \implies u \lex\sqsubset v)\bigr\}.
\]
The language $R'$ contains exactly one (lexicographically minimal)
representative of each length for each element of $T$. Furthermore, the
language $R'$ is $+$-prefix-closed, for if $u$ is not
$\lex\sqsubset$-minimal amongst words of length $|u|$ representing
$\elt{u}$, then for any $a \in A$, the word $ua$ is not
$\lex\sqsubset$-minimal amongst words of length $|ua|$ representing
$\elt{ua}$.

Define 
\[
\delta : \bigcup_{n=0}^\infty \big(A^n \times B^n\big) \to (A \times B)^*
\]
(so that $(u,v)\delta$ is defined when $u \in A^*$ and $v \in B^*$
have equal length) by
\[
(a_1a_2\cdots a_n,b_1b_2\cdots b_n) \mapsto (a_1,b_1)(a_2,b_2)\cdots(a_n,b_n),
\]
where $a_i \in A$, $b_i \in B$.

Let $K = \{(w,u) : w \in L, u\in R', |w|=|u|\}$. Then $K\delta$ is a
regular language over $A \times B$. Since both $L$ and $R'$ are
$+$-prefix-closed, so is $K\delta$.

Now let $(s,t) \in S \times T$. Then since $L$ maps onto $S$, there is
a word $w \in L$ with $\elt{w} = s$. There is a word $u'$ of length
$|w|$ over $B$ such that $\elt{u'} = t$. Let $u$ be the
$\lex\sqsubset$-minimal such word. Then $|u| = |w|$ and so $(w,u)\in
K$ and so $(w,u)\delta \in K\delta$ represents $(s,t)$. So $K\delta$
maps onto $S \times T$.

Now suppose $(w,u)\delta, (w',u')\delta \in K\delta$ represent the
same element of $S \times T$. Then $\elt{w} = \elt{w'}$ and $\elt{u} =
\elt{u'}$. Since $L$ is a Markov language for $S$, it maps bijectively
onto $S$ and so $w = w'$. In particular, $|w| = |w'|$, and so $|u| =
|u'|$ by the definition of $K$. Since $\elt{u} = \elt{u'}$ and $|u| =
|u'|$, and $R'$ contains exactly one representative of $\elt{u}$ of
length $|u|$, it follows that $u = u'$. Hence $(w,u)\delta =
(w',u')$. Therefore $K\delta$ maps bijectively onto $S \times T$.

Thus $K\delta$ is a Markov language for $S \times T$ and so $S \times
T$ is a Markov semigroup.
\end{proof}

\begin{theorem}
\label{thm:robustmarkovdirprodfinite}
Let $S$ be a robustly Markov semigroup and let $T$ be finite. Then $S \times T$
is a robustly Markov semigroup if and only if it is finitely generated.
\end{theorem}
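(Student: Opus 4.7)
The plan is to adapt the construction from the proof of Theorem \ref{thm:markovdirprodfinite}, tracking lengths carefully. The ``only if'' direction is immediate, since a robustly Markov semigroup is by definition finitely generated. For the ``if'' direction, suppose $S \times T$ is finitely generated. Then by \cite[Lemma~2.3]{robertson_dirprod} we have $T^2 = T$, and hence $T^n = T$ for every $n \in \nset$.

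I would then take $L$ to be a robust Markov language for $S$ over some alphabet $A$ representing a generating set for $S$, and introduce the finite alphabet $B$ in bijection with $T$, the synchronous rational relation $R$, the lexicographic order $\lex\sqsubset$, the $+$-prefix-closed regular language $R'$ of lex-minimal representatives of each length for each element of $T$, and the synchronization map $\delta$, all exactly as in the proof of Theorem \ref{thm:markovdirprodfinite}. Setting $K\delta = \{(w,u)\delta : w \in L,\ u \in R',\ |w|=|u|\}$ over the alphabet $A \times B$, the earlier proof already establishes that $K\delta$ is regular, $+$-prefix-closed, and maps bijectively onto $S \times T$ under the representation $(a,b) \mapsto (\elt{a},\elt{b})$.

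What remains is to verify that $|(w,u)\delta| = \lambda_{A \times B}(\elt{(w,u)\delta})$ for every $(w,u)\delta \in K\delta$. To compute $\lambda_{A \times B}$ in general, observe that any length-$n$ word over $A \times B$ representing $(s,t)$ decomposes into $w' \in A^n$ and $u' \in B^n$ with $\elt{w'} = s$ and $\elt{u'} = t$; this forces $n \geq \lambda_A(s)$. Conversely, since $T^n = T$ for every $n \in \nset$, there exists a word over $B$ of length $\lambda_A(s)$ representing $t$, which may be paired with a length-minimal $A$-representative of $s$ to yield a word over $A \times B$ of length $\lambda_A(s)$ representing $(s,t)$. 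Hence $\lambda_{A \times B}((s,t)) = \lambda_A(s)$. Since $L$ is robust, $|w| = \lambda_A(\elt{w})$, and so $|(w,u)\delta| = |w| = \lambda_A(\elt{w}) = \lambda_{A \times B}(\elt{(w,u)\delta})$, as required. The main obstacle is mostly conceptual book-keeping across the three alphabets $A$, $B$, and $A \times B$; the key enabling fact is the closure $T^n = T$, which allows $B$-representatives of elements of $T$ to be padded to match the (arbitrarily large) lengths of $A$-representatives without affecting length-minimality.
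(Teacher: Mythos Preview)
Your proposal is correct and follows essentially the same approach as the paper: reuse the construction from Theorem~\ref{thm:markovdirprodfinite} with $L$ robust, then verify $\lambda_{A\times B}(s,t)=\lambda_A(s)$ so that $K\delta$ consists of length-minimal representatives. Your justification of this length equality via the two inequalities (projection to $A$ for one direction, $T^n=T$ for the other) is in fact more explicit than the paper's, which simply notes $\lambda_B(t)=1$ and asserts the conclusion.
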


\begin{proof}
Proceed as in the proof of
\fullref{Theorem}{thm:robustmarkovdirprodfinite}, with $L$ being a
robust Markov language for $S$. Since $\lambda_B(t) = 1$ for all $t
\in T$, it follows that $\lambda_{(A\times B)\delta}(s,t) =
\lambda_A(s)$. So, by its construction, $K\delta$ is a robust Markov
language for $S \times T$.
\end{proof}

The corresponding result for being strongly Markov is still open:

\begin{question}
Let $S$ be strong Markov and $T$ finite. If $S\times T$ is finitely generated, is it strongly Markov?
\end{question}

We conjecture that the answer to this question is `yes', but probably
requires more complex reasoning than in the proofs of
\fullref{Theorems}{thm:markovdirprodfinite} and
\ref{thm:robustmarkovdirprodfinite}, because the generating set for $S
\times T$ may not project onto $T$, which complicates the relationship
between minimal lengths of representatives of elements of $S \times T$
and $T$.

As remarked above, the following question is open:

\begin{question}
Let $S$ and $T$ be Markov. If $S\times T$ is finitely generated, is it Markov?
\end{question}

The following question also arises:

\begin{question}
Is it true that whenever $S\times T$ is Markov, then both factors
$S$ and $T$ are Markov?
\end{question}

The answer to this question may shed light on the long-standing open
question of whether direct factors of automatic groups, monoids, or
semigroups must themselves be automatic (see \cite[Open
  Question~4.1.2]{epstein_wordproc} and
\cite[Question~6.6]{campbell_autsg}).

\section{Free products}
\label{sec:freeprod}

\begin{theorem}
The class of Markov monoids is closed under forming (monoid)
free products.
\end{theorem}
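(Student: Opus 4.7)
The plan is to build a Markov language for $M * N$ by concatenating Markov languages for $M$ and $N$ according to the normal form theorem for (monoid) free products. Let $M$ and $N$ be Markov monoids, and (renaming if necessary) let $A$ and $B$ be disjoint finite alphabets representing monoid generating sets for $M$ and $N$, with monoid Markov languages $K \subseteq A^*$ for $M$ and $L \subseteq B^*$ for $N$. Then $A \cup B$ represents a monoid generating set for $M * N$ under the evident representation map.

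Set $K' = K - \{\emptyword\}$ and $L' = L - \{\emptyword\}$. I would define
\[
H = \{\emptyword\} \cup K'(L'K')^* \cup K'(L'K')^*L' \cup L'(K'L')^* \cup L'(K'L')^*K',
\]
which is regular as a finite union of concatenations and Kleene stars of regular languages. The four non-trivial summands enumerate alternating products of blocks from $K'$ and $L'$ according to whether the first and last blocks come from $K'$ or $L'$.

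The verification breaks into two main steps. First, I would show $H$ is prefix-closed: given $w \in H$ written as $w_1 \cdots w_n$ with $w_i$ alternately in $K'$ and $L'$, any prefix of $w$ has the form $w_1 \cdots w_{i-1} w_i'$ where $w_i'$ is a prefix of $w_i$. Since $K$ and $L$ are prefix-closed, $w_i' \in \{\emptyword\} \cup K'$ or $\{\emptyword\} \cup L'$; in either case the prefix falls into one of the five summands of $H$. Second, I would show $H$ maps bijectively onto $M * N$. Surjectivity follows from the normal form theorem for free products: every element of $M * N$ is uniquely $s_1 \cdots s_n$ with the $s_i$ non-identity and alternating between $M$ and $N$, so taking the unique non-empty $K$- or $L$-representative of each $s_i$ and concatenating gives a word in $H$. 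For injectivity, the disjointness of $A$ and $B$ guarantees that every word in $(A \cup B)^+$ has a unique maximal decomposition into alternating blocks from $A^+$ and $B^+$, which for $w \in H$ coincides with the intended block decomposition into $K'$- and $L'$-blocks; combining uniqueness of normal form in $M * N$ with uniqueness of representatives in $K$ and $L$ gives that equal elements force equal block decompositions and hence equal words.

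The only subtle point, and the main potential obstacle, is the injectivity argument: one must rule out that the same element of $M * N$ could be represented by two words in $H$ whose block decompositions differ (for instance, two consecutive elements of $K'$ might together form a single word of $A^+$ that coincides with some other element of $K'$). The disjointness of $A$ and $B$ handles cross-alphabet ambiguity, and then the uniqueness clause of the normal form theorem for free products — combined with the fact that $K$ and $L$ consist of unique representatives — rules out intra-block ambiguity. Once this is settled, $H$ is a monoid Markov language for $M * N$, so $M * N$ is Markov.
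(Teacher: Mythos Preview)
Your proposal is correct and is precisely the standard construction that the paper alludes to when it says the group proof generalizes directly; indeed, your language $H$ is the monoid analogue of the language the paper writes down explicitly for the semigroup free product in the very next theorem. The one point worth making explicit (which you use implicitly) is that no word in $K'$ represents $1_M$ and no word in $L'$ represents $1_N$, since $\emptyword$ is the unique representative of the identity in each Markov language; this is what guarantees the block sequence $\elt{w_1}\cdots\elt{w_n}$ really is a reduced normal form in $M*N$ and makes the injectivity step go through.
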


\begin{proof}
The proof for groups generalizes
directly~\cite[Proposition~9]{ghys_markov}.
\end{proof}

\begin{theorem}
The class of Markov semigroups, the class of robustly Markov
semigroups, and the class of strongly Markov semigroups are all closed
under forming (semigroup) free products.
\end{theorem}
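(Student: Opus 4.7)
The plan is to exploit the alternating normal form of $S*T$: every element admits a unique expression $u_1u_2\cdots u_n$ with the $u_i$ lying alternately in $S$ and in $T$. Throughout, I would take disjoint finite alphabets $A, B$ representing generating sets for $S, T$ respectively, and, given $+$-prefix-closed regular languages $L_A \subseteq A^+$ and $L_B \subseteq B^+$, define
\[
M(L_A, L_B) = L_A(L_BL_A)^*(L_B \cup \{\emptyword\}) \cup L_B(L_AL_B)^*(L_A \cup \{\emptyword\}),
\]
the set of non-empty alternating concatenations. The language $M(L_A, L_B)$ is regular, and $+$-prefix-closed because any non-empty prefix of $w_1 w_2 \cdots w_k$ (with the $w_i$ alternately in $L_A$ and $L_B$) has the form $w_1 \cdots w_{i-1} w'$ for a non-empty prefix $w'$ of $w_i$, which by the $+$-prefix-closure of $L_A, L_B$ still lies in the appropriate factor-language. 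When moreover $L_A, L_B$ biject with $S, T$, the alternating normal form gives a bijection $M(L_A, L_B) \to S*T$.

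For the first two parts, I would apply this construction with $L_A, L_B$ being Markov (respectively, robust Markov) languages for $S, T$. In the robustly Markov case, minimality follows because any $z \in (A \cup B)^+$ decomposes uniquely into maximal $A$-blocks and $B$-blocks (since $A \cap B = \emptyset$); these blocks correspond exactly to the alternating factors of $\elt z$, and each block's length is at least the natural length of the corresponding $u_i$, so $|z|$ is bounded below by the length of the $M$-representative of $\elt z$.

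For the strongly Markov case, given an arbitrary alphabet $C$ representing a generating set for $S*T$, I would partition $C = A \cup B \cup C'$ with $A = \{c \in C : \elt{c} \in S\}$, $B = \{c \in C : \elt{c} \in T\}$, and $C'$ the letters representing mixed elements (alternating form of length at least two). Since the free product admits no cancellation across factors, any $C$-word representing an element of $S$ must consist solely of $A$-letters; hence $A$ generates $S$, and symmetrically $B$ generates $T$. The strong Markov hypothesis then supplies robust Markov languages $L_A, L_B$ over $A, B$.

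The essential difficulty here is that $\lambda_C(w)$ may be strictly less than $\lambda_{A \cup B}(w)$: a single letter $c \in C'$ can abbreviate several letters over $A \cup B$, so $M(L_A, L_B) \subseteq (A \cup B)^+$ is not $C$-minimal in general. My plan is to construct a language $M_C \subseteq C^+$ via a finite-state automaton $\mathcal{F}$ whose state would record (i) which of $S, T$ is currently being assembled, (ii) the state reached in a fixed DFA recognizing $L_A$ or $L_B$ for that factor, and (iii) partial progress along the alternating forms of $C'$-letters. Letters of $A \cup B$ would advance the factor-DFA directly, while letters of $C'$ would traverse several alternating components at once and be admitted only when their partial contributions align with the states already reached; uniqueness and $+$-prefix-closure would be enforced by a lex-first rule among length-minimal representations and by making every state encoding a completed factor final. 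The main obstacle will be proving \emph{global} length-minimality: naive greedy use of $C'$-letters can fail when overlapping shortcuts compete, so one must argue that the sequential alternating structure of $S*T$ confines each $c \in C'$'s effect to a bounded window, reducing the global minimization to a comparison among finitely many local configurations. Formalizing this while simultaneously guaranteeing uniqueness and $+$-prefix-closure is the technical heart of the argument.
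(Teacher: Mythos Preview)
Your treatment of the Markov and robustly Markov clauses coincides with the paper's: both form the language of alternating concatenations from the two factor languages and verify regularity, $+$-prefix-closure, bijectivity, and (in the robust case) additivity of natural lengths across the alternating factors.

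For the strongly Markov clause the paper is far shorter than your plan. It simply observes that any generating alphabet $C$ for $S*T$ must contain subalphabets $A,B$ generating $S,T$ (since, in a \emph{semigroup} free product, an element of $S$ can only be written as a product of elements of $S$), invokes the strong Markov hypothesis to obtain robust languages $K\subseteq A^{+}$ and $L\subseteq B^{+}$, and then declares the resulting $M\subseteq(A\cup B)^{+}\subseteq C^{+}$ to be a robust Markov language for $S*T$; it does not mention the possibility $C'=C\setminus(A\cup B)\neq\varnothing$ at all.

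You have in fact put your finger on a genuine gap in that argument. Your objection is correct: if some $c\in C$ represents an element of alternating length at least $2$ --- for instance $S$ and $T$ free on one generator each, $C=\{a,b,c\}$ with $\elt{c}=\elt{ab}$ --- then $\lambda_{C}(\elt{c})=1<2=\lambda_{A\cup B}(\elt{c})$, so the paper's $M$ fails the $C$-minimality condition and is \emph{not} a robust Markov language over $C$. The paper's final paragraph therefore does not establish the strongly Markov clause as written.

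That said, your proposed repair is only an outline. The finite-state machinery you describe (states recording the current factor, the DFA state within that factor, and partial progress through $C'$-letters) is plausible, and your bounded-window intuition is reasonable since each $C'$-letter spans only boundedly many alternating blocks. But the step you yourself label ``the technical heart'' --- simultaneously securing global $C$-minimality, uniqueness, and $+$-prefix-closure --- is not carried out, and it is not obvious that a lex-first or greedy selection over $C$ interacts cleanly with the alternating decomposition (a $C'$-letter can straddle a block boundary, so taking a prefix may cut it mid-block). As things stand, neither your plan nor the paper's proof completely settles the strongly Markov case; your first two clauses are fine, and for the third you have correctly diagnosed the difficulty but not yet resolved it.
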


\begin{proof}
Let $S$ and $T$ be Markov semigroups. Let $K \subseteq A^+$ and $L
\subseteq B^+$ be semigroup Markov languages for $S$ and $T$, respectively. Let
\[
M = (KL)^+ \cup (KL)^*K \cup (LK)^+ \cup (LK)^*K.
\]
Since the languages $K$ and $L$ are prefix-closed and regular, so is
the language $M$. Any element of the free product $S \ast T$ has a
unique representation as an alternating product of elements of $S$ and
$T$. That is $S \ast T$ is the disjoint union of
\begin{align*}
X_1 &= \{s_1t_1\cdots s_nt_n : s_i \in S, t_i\in T, n \in \nset\}, \\
X_2 &= \{s_1t_1\cdots s_nt_ns_{n+1} : s_i \in S, t_i\in T, n \in \nset\cup\{0\}\}, \\
X_3 &= \{t_1s_1\cdots t_ns_n : s_i \in S, t_i\in T, n \in \nset\}, \\
X_4 &= \{t_1s_1\cdots t_ns_nt_{n+1} : s_i \in S, t_i\in T, n \in \nset\cup\{0\}\}.
\end{align*}
Since the languages $K$ and $L$ do not contain the empty word, every
element of $X_1$ (respectively $X_2,X_3,X_4$) has a unique
representative in $(KL)^+$ (respectively $(KL)^*K, (LK)^+,
(LK^*K$). So every element of $S \ast T$ has a unique representative
in $M$. So $M$ is a Markov language for $S \ast T$.

Following the same reasoning with $S$ and $T$ being robustly Markov
semigroups and $K$ and $L$ being robust Markov languages shows that $M$ is a robust Markov language for $S \ast T$,
since,
\[
\lambda_{A \cup B}(s_1t_1\cdots s_nt_n) = \sum_{i=1}^n \big(\lambda_A(s_i) + \lambda_B(t_i)\big),
\]
and similarly for alternating products in $X_2 \cup X_3 \cup X_4$.

Finally, suppose that $S$ and $T$ are strongly Markov semigroups. Let
$C$ be a finite alphabet representing a generating set for $S \ast
T$. Since $S \ast T$ is a \emph{semigroup} free product, $C$ contains
subalphabets $A$ and $B$ representing generating sets for $S$ and $T$
respectively. Since $S$ and $T$ are strongly Markov semigroups, there
exist robust Markov languages $K \subseteq A^+$ and $L \subseteq B^+$
for $S$ and $T$ respectively. Thus, by the preceding paragraph, $M
\subseteq (A \cup B)^+ \subseteq C^+$ is a robust Markov language for
$S \ast T$. Since $C$ was arbitrary, $S \ast T$ is strongly Markov.
\end{proof}

\section{Finite-index extensions and subsemigroups}
\label{sec:finind}

Many properties of groups are known to be preserved under passing from
groups to finite-index extensions and subgroups; for example, finite
generation and presentability. For semigroups, the most well-known
notion of index is the Rees index: if $T$ is subsemigroup of a semigroup
$S$, then $T$ has finite index in $S$ if $S-T$ is finite. Many
properties of semigroups are known to be preserved on passing to
finite Rees index extensions and subsemigroups; for example, finite
generation~\cite[Theorem~1.1]{ruskuc_largesubsemigroups}, finite
presentability~\cite[Theorem~1.3]{ruskuc_largesubsemigroups}, and
automaticity~\cite[Theorem~1.1]{hoffmann_autfinrees}. The following
result fits this pattern:

\begin{theorem}
The class of Markov semigroups is closed under forming finite Rees
index extensions and subsemigroups.
\end{theorem}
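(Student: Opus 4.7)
The plan is to handle the two closure directions separately, with the extension direction being essentially immediate and the subsemigroup direction requiring a compression argument.

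\emph{Extension direction.} Suppose $T$ is Markov with Markov language $L \subseteq A^+$ over a finite alphabet $A$. Since $S - T$ is finite, I introduce a finite alphabet $B$ disjoint from $A$ and in bijection with $S - T$, each $b \in B$ representing its associated element. Then $M = L \cup B \subseteq (A \cup B)^+$ is regular as a union of regular sets, is $+$-prefix-closed (since $L$ is and since single letters have no non-empty proper prefixes), and maps bijectively onto $S = T \sqcup (S - T)$. Since $A \cup B$ generates $S$, $M$ witnesses that $S$ is Markov.

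\emph{Subsemigroup direction.} Suppose $S$ is Markov with language $K$ over $C$, and let $m = |S - T|$. Because $K$ has unique representatives, the set $K_E = K \cap \phi^{-1}(S - T)$ has exactly $m$ elements, so $K_T = K \setminus K_E$ is regular and maps bijectively onto $T$; however $K_T$ need not be $+$-prefix-closed because prefixes of $K_T$-words can land in $K_E$. The crucial observation is that any $w \in K_T$ has at most $m$ prefixes in $K_E$, since those prefixes are distinct elements of the finite set $K_E$. In particular the first prefix of $w$ lying in $K_T$ has length at most $m + 1$, and consecutive $K_T$-prefixes of $w$ differ in length by at most $m + 1$.

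I would then compress each maximal $E$-segment of a $K_T$-word into a single letter of a new finite alphabet $A'$, taking as letters symbols $\alpha_u$ for each admissible first-$T$-prefix $u \in C^{\leq m+1}$ (representing $\elt{u} \in T$) and symbols $\gamma_\beta$ for each admissible $T$-to-$T$ increment $\beta \in C^{\leq m+1}$ (representing $\elt{\beta}$). The target Markov language $L'$ consists of those words $\alpha_u \gamma_{\beta_1} \cdots \gamma_{\beta_k} \in A'^+$ whose $C$-decoding $u \beta_1 \cdots \beta_k$ lies in $K_T$ and whose block boundaries coincide exactly with the $K_T$-prefixes of that decoded word. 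By construction, each $t \in T$ corresponds to the unique block decomposition of its representative in $K_T$ (so $L'$ is injective onto $T$); every non-empty prefix of an $L'$-word decodes to a $K_T$-prefix (so $L'$ is $+$-prefix-closed); and $L'$ surjects onto $T$ because every $K_T$-representative admits such a decomposition.

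The main obstacle is to verify that $L'$ is regular. Admissibility of a given $\gamma_\beta$ at a given position depends on whether the partial products $t \cdot \elt{\beta'}$ lie in $S - T$ for each proper non-empty prefix $\beta'$ of $\beta$, where $t$ is the current accumulated $T$-element---and $t$ ranges over the potentially infinite set $T$. The way around this is to observe that the $T$-or-$E$ status of a partial product can be read off from a DFA for $K$ product-augmented with a finite-state gadget that tracks whether the currently decoded prefix is one of the $m$ words of $K_E$; because each block has length at most $m + 1$ and $K_E$ is finite, the refined DFA has finitely many states, so a standard simulation recognizes $L'$ block by block. Once $L'$ is regular, $+$-prefix-closed, and bijective with $T$, we conclude that $T$ is Markov.
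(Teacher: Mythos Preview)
Your extension direction is correct and matches the paper's argument exactly.

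For the subsemigroup direction there is a genuine gap: your increment letters $\gamma_\beta$ need not represent elements of $T$. A Markov language for $T$ must be taken over an alphabet whose letters represent elements of $T$, but an increment $\beta$ between two consecutive $K_T$-prefixes $p$ and $p\beta$ satisfies only $\elt{p}, \elt{p\beta} \in T$; nothing forces $\elt{\beta} \in T$, since $T$ is merely a subsemigroup and not an ideal of $S$. Concretely, let $S = \{e\} \cup \{a_i : i \geq 1\}$ with $a_i a_j = a_{i+j}$, $e a_i = a_i e = a_{i+1}$, $e^2 = a_2$, and $T = \{a_i : i \geq 1\}$, so $S - T = \{e\}$. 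With $C = \{c, d\}$, $\elt{c} = e$, $\elt{d} = a_1$, the language $K = \{d\} \cup c^+$ is a semigroup Markov language for $S$. Here $K_E = \{c\}$ and $K_T = \{d\} \cup \{c^n : n \geq 2\}$. For $c^n$ with $n \geq 3$ the consecutive $K_T$-prefixes are $c^2, c^3, \ldots, c^n$, so every increment is the single letter $c$, and your $\gamma_c$ would have to represent $\elt{c} = e \notin T$. Since $c^n$ is the unique $K$-representative of $a_n$ for $n \geq 2$, you cannot simply discard $\gamma_c$ from the alphabet.

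The paper circumvents this by factorising from the right rather than the left: one iteratively peels off the shortest \emph{suffix} of the current remainder lying in $L(A,T) = \{w \in A^+ : \elt{w} \in T\}$, obtaining $w = w_{n+1} a_n w_n \cdots a_1 w_1$ with each $a_i w_i \in L(A,T)$ by construction, and with the leftmost block $w_{n+1} a_n w_n$ also in $L(A,T)$ because it is precisely the remainder $w'_n$ that the procedure had just found to lie in $L(A,T)$ before peeling off $a_n w_n$. Thus every block, and hence every new letter $d_{\rho,a,\sigma}$, represents an element of $T$. These letters are indexed by the $(S-T)$-\emph{value} (not the word) of the adjacent pieces, which keeps the alphabet finite. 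Regularity of the encoding is established not via your DFA-plus-finite-gadget idea but by invoking a finite-index left congruence on $S$ contained in $(T \times T) \cup \Delta_{S-T}$, from the syntactic theory of large subsemigroups; prefix-closure of the resulting language then follows because deleting the rightmost block $a_1 w_1$ leaves a word that is still in $L$ (by prefix-closure of $L$) and still in $L(A,T)$ (as a product of the remaining blocks, each in $T$).
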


\begin{proof}
Let $S$ be a semigroup and let $T$ be a finite Rees index
subsemigroup of $S$.

Suppose that $T$ is Markov and that $L$ is a Markov language for $T$
over some finite alphabet $A$ representing a generating set for
$T$. Let $B$ be an alphabet in bijection with $S - T$; then $B$ is
finite since $T$ has finite Rees index in $S$. Without loss of
generality, assume that $B$ and $A$ are disjoint. Then $L \cup B$
is a Markov language for $S$.

Now suppose that $S$ admits a Markov language $L$ over an alphabet
$A$.

Define
\[
L(A,T) = \{w \in A^+ : \elt{w} \in T\}.
\]
Let $C$ be an alphabet of unique representatives for $S-T$.  For any
word $w \in A^* - L(A,T)$, let $\rep{w}$ be the unique element of $C
\cup \{\emptyword\}$ representing $\elt{w}$, or $\emptyword$ if $w =
\emptyword$.

Define the alphabet
\[
D = \{d_{\rho, a,\sigma} : \rho,\sigma \in C \cup \{\emptyword\}, a \in A,
a\sigma \in L(A,T) \land \rho a\sigma \in L(A,T)\},
\]
and let it represent elements of $T$ as follows: 
\[
\elt{d_{\rho, a,\sigma}} = \elt{\rho a\sigma}.
\]
Notice that if $A$ is finite, $D$ too must be finite.

Let $\rel{R} \subseteq A^+ \times D^+$ be the relation consisting of pairs
\[
\big(w_{n+1}a_nw_na_{n-1}w_{n-1}\cdots a_2w_2a_1w_1,d_{\rep{w_{n+1}},a,\rep{w_n}}d_{\emptyword,a,\rep{w_{n-1}}}\cdots d_{\emptyword,a,\rep{w_2}}d_{\emptyword,a,\rep{w_1}}\big)
\]
where the left-hand side lies in $L(A,T)$ and the factorization of the
left-hand side is obtained in the following way: start by letting the
left-hand side be $w'_0$; a partial factorization
\[
w'_{i+1}a_iw_i\cdots a_1w_1
\]
is complete if $w'_{i+1} \notin L(A,T)$; if on the other hand
$w'_{i+1} \in L(A,T)$ set $a_{i+1}w_{i+1}$ to be the shortest suffix of
$w'_{i+1}$ lying in $L(A,T)$ and let $w'_{i+2}$ be the remainder of
$w'_{i+1}$.

Notice that if $(w,u) \in \rel{R}$ then $\elt{w} = \elt{u}$ by the
definition of how the alphabet $D$ represents element of $T$, and that
each word $w$ determines a unique word $u$ such that $(w,u) \in
\rel{R}$.

\begin{lemma}
\label{lem:reesindexrational}
The relation $\rel{R}$ is rational.
\end{lemma}

\begin{proof}
It is easier to explain a how a two-tape finite state automaton
$\fsa{A}$ can recognize $\rel{R}$ when reading from right-to-left;
since the class of rational relations is closed under reversal, it
will then follow that $\rel{R}$ is rational.

By the dual of \cite[Theorem~4.3]{ruskuc_syntactic}, $S$ admits a left
congruence $\Lambda$ of finite index (that is, having finitely many
equivalence classes) contained within $(T \times T) \cup
\Delta_{S-T}$, where $\Delta_{S-T}$ is the diagonal relation on $S-T$
(that is, $\{(s,s) : s \in S - T\}$).

Imagine the automaton $\fsa{A}$ reading letters from $A$ from its
left-hand input tape and outputting symbols from $D$ on its right-hand
tape. Suppose the content of its left-hand tape is $w$. As it reads
symbols from $w$ (moving from right to left along the tape), it keeps
track of the $\Lambda$-class of the element represented by the suffix
of $w$ read so far. (This is possible because $\Lambda$ is a left
congruence with only finitely many equivalence classes.) In
particular, $\fsa{A}$ knows whether the element represented by the
suffix read so far lies in $T$ (or equivalently, whether the suffix
read so far lies in $L(A,T)$), or, if the element so represented lies
in $S-T$, which letter of $C \cup \{\emptyword\}$ represents it. When
$\fsa{A}$ reads a symbol $a$ such that the suffix read so far --- say
$aw'$ --- lies in $L(A,T)$, it non-deterministically chooses one of
two actions:
\begin{enumerate}

\item It outputs $d_{\emptyword,a,\rep{w'}}$, resets its store of the
  suffix read so far to $\emptyword$, and continues to read from its
  left-hand tape.

\item It outputs $d_{c,a,\rep{w'}}$, where $c$ is a
  non-deterministically chosen element of $C \cup \{\emptyword\}$,
  then reads the remainder $v$ of its left-hand tape and accepts if
  and only if $\rep{v} = c$. (Notice that this is the only way that $\fsa{A}$ can accept.)

\end{enumerate}
By induction on the subscripts of the letters $a_i$, the automaton
$\fsa{A}$ can accept only by outputting letters
$d_{\emptyword,a,\rep{w_i}}$ immediately after reading the suffix
$a_iw_i\cdots a_1w_1$ and the letter $d_{\rep{w_{n+1}},a_n,\rep{w_n}}$
  immediately after reading $a_nw_n\cdots a_1w_1$, and can accept only
  when $w_{n+1} \notin L(A,T)$. So $\fsa{A}$ recognizes $\rel{R}$,
  reading from left-to-right.
\end{proof}

By \fullref{Lemma}{lem:reesindexrational},
\[
K = L \circ \rel{R} = \big\{u \in D^*: (\exists v \in L)\big((u,v) \in \rel{R}\big)\big\}.
\]
is regular. Since the set of left-hand sides of elements of $\rel{R}$ is
$L(A,T)$, the language $K$ maps onto $T$.

Suppose $u_1,u_2 \in K$ are such that $\elt{u_1} = \elt{u_2}$. Let
$w_1,w_2 \in L$ be such that $(w_1,u_1), (w_2,u_2) \in \rel{R}$. Since
$L$ maps bijectively onto $S$ and $\elt{w_1} = \elt{u_1} = \elt{u_2} =
\elt{w_2}$, the words $w_1$ and $w_2$ must be identical. Since every
$w \in L(A,T)$ determines a unique $u \in D^+$ with $(w,u) \in
\rel{R}$, it follows that $u_1$ and $u_2$ are identical. So $K$ maps
bijectively onto $T$.

Finally, let $u \in K$ with $|u| \geq 2$. Then $u =
d_{c_{n+1},a_n,c_n}\cdots
d_{\emptyword,a_2,c_2}d_{\emptyword,a_1,c_1}$, with $n \geq 2$. Then
there is some word $w \in L$ with $(w,u) \in \rel{R}$. By the
definition of $\rel{R}$, the word $w$ factorizes as
$w_{n+1}a_nw_n\cdots a_2w_2a_1w_1 \in L$ with $\rep{w_i} = c_i$, and
$a_1w_1, a_2w_2, \ldots, w_{n+1}a_nw_n \in L(A,T)$.

Since $L$ is prefix-closed, $w_{n+1}a_nw_n\cdots a_2w_2 \in L$. Since
$a_2w_2, \ldots, w_{n+1}a_nw_n \in L(A,T)$, it follows that
$w_{n+1}a_nw_n\cdots a_2w_2 \in L(A,T)$. So, by the definition of
$\rel{R}$, it follows that $d_{c_{n+1},a_n,c_n}\cdots
d_{\emptyword,a_2,c_2} \in K$.

This shows that $K$ is closed under taking longest proper non-empty
prefixes. By induction, $K$ is $+$-prefix-closed. Hence $K$ is a
Markov language for $T$.
\end{proof}

However, the Rees index has the disadvantage that is does not
generalize the group index. This motivated Gray \& Ru\v{s}kuc
\cite{gray_green1} to develop the notion of Green index, which does
generalize the group index. The definition and only the necessary
properties of the Green index and related topics are given here; the
reader is referred to \cite[\S~1]{gray_green1} for further details.

\begin{definition}
\label{def:greenindex}
Let $S$ be a semigroup and let $T$ be a subsemigroup of $S$. The
$T$-relative Green's relations $\rel{R}^T$, $\rel{L}^T$, and
$\rel{H}^T$ are defined on $S$ as follows: for $x,y \in S$,
\begin{align*}
x\;\rel{R}^T\;y &\iff xT^1 = yT^1 \\
x\;\rel{L}^T\;y &\iff T^1x = T^1y \\
x\;\rel{H}^T\;y & \iff x\;\rel{R}^T\;y \land x\;\rel{L}^T\;y;
\end{align*}
these are equivalence relations~\cite[\S~1]{gray_green1}. The
$T$-relative $\rel{R}^T$-, $\rel{L}^T$-, and $\rel{H}^T$-classes (that
is, the equivalence classes of these relations) respect $T$, in the
sense that each such class lies either wholly in $T$ or wholly in
$S-T$.

The \defterm{Green index} of $T$ in $S$ is defined to be one more than
the number of $\rel{H}^T$-classes in $S-T$.
\end{definition}

Several properties are known to
be preserved under passing to finite Green index extensions and
subsemigroups, such as finite generation~\cite[Theorems~4.1 \&
  4.3]{cgr_greenindex}, others are known to hold on passing to finite
Green index subsemigroups and not on passing to finite Green index
extensions, such as automaticity~\cite[Theorem~10.1 \&
  Example~10.3]{cgr_greenindex}. The following example shows that
neither the class of Markov semigroups nor the class of strongly
Markov semigroups is not closed under finite Green index
extensions. Indeed, a finite Green index extension of a strongly
Markov semigroup need not be Markov:

\begin{example}
Let $G$ a finitely generated infinite torsion group. Let $B$ be an
alphabet representing a generating set for $G$. Let $A$ be a finite
alphabet in bijection with $B$. Let $F$ be the free group with basis
$\elt{A}$. The bijection from $\elt{A}$ to $\elt{B}$ naturally extends
to a surjective homomorphism $\phi : F \to G$. Let $S$ be the strong
semilattice of groups $\mathcal{S}(F,G,\phi)$. (See
\cite[\S\S~4.1--4.2]{howie_fundamentals} for background on strong
semilattices of groups.)

The free group is hyperbolic and therefore strongly Markov. Moreover, $F$
is a finite Green index subsemigroup of $S$, with $S - F$ consisting
of the single $\rel{H}_F$-class $G$.

Suppose that $S$ is Markov. Then by
\fullref{Proposition}{prop:markovchangegen}, $S$ admits a regular
language of unique normal forms $L$ over the alphabet $A \cup B$. By
the definition of multiplication in a strong semilattice of monoids,
the words in $L$ representing elements of $G$ are precisely those that
include at least one letter $B$. That is, the language of words in $L$
representing elements of $G$ is $K = L - A^*$. Since $L$ is regular,
$K$ is also. Since $L$ maps bijectively onto $S$ and $K \subseteq L$,
it follows that $K$ maps bijectively onto $G$. So if each letter $a
\in A$ is interpreted as representing the element $\elt{a}\phi$ of
$G$, then $K$ is a regular language of unique normal forms for $G$.
However, $G$, as a finitely generated infinite torsion group, does not
admit a regular language of unique normal forms by the reasoning
in~\cite[Example~2.5.12]{epstein_wordproc}. This is a contradiction,
and so $S$ cannot be Markov.
\end{example}

This example is similar in spirit to examples showing that neither the
class of finitely presented semigroups nor the class of automatic
semigroups is not closed under forming finite Green index
extensions~\cite[Examples~6.5 \&~10.3]{cgr_greenindex}. However, with
an extra condition on the Sch\"utzenberger groups of the $T$-relative
$\rel{H}$-classes in the complement, a positive result does
hold. First of all, recall the definitions of Sch\"utzenberger groups:

\begin{definition}
Retain notation from \fullref{Definition}{def:greenindex}. Let $H$ be
an $\rel{H}_T$. Let $\Stab(H) = \{ t \in T^1 : Ht = H \}$ (the
\defterm{stabilizer} of $H$ in $T$), and define an equivalence
$\sigma(H)$ on $\Stab(H)$ by $(x,y) \in \sigma(H)$ if and only if $hx
= hy$ for all $h\in H$. Then $\sigma(H)$ is a congruence on $\Stab(H)$
and $\Stab(H)/\sigma(H)$ is a group, called the
\defterm{Sch\"utzenberger group} of the $\rel{H}_T$-class $H$ and
denoted $\Gamma(H)$.
\end{definition}

\begin{proposition}
\label{prop:fingreenindexup}
Let $S$ be a semigroup and $T$ a subsemigroup of $S$ of finite Green
index. Suppose that $T$ is Markov and that the Sch\"utzenberger group
of every $T$-relative $\rel{H}$-class in $S - T$ is Markov. Then $S$
is Markov.
\end{proposition}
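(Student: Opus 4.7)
The plan is to build a Markov language $K$ for $S$ by concatenating a Markov language $L_T$ for $T$ with, for each $\rel{H}^T$-class in $S-T$, a sublanguage obtained by prepending a new basepoint letter to a Markov language of the corresponding Sch\"utzenberger group. Because $T$ has finite Green index in $S$, only finitely many $\rel{H}^T$-classes $H_1, \ldots, H_n$ lie in $S-T$, so this will be a finite union.

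First I would record the standard property that $\Gamma(H_i)$ acts freely and transitively on $H_i$ via $h \cdot [t]_{\sigma(H_i)} = ht$, so that once a basepoint $h_i \in H_i$ is fixed, the map $\tau \mapsto h_i \tau$ is a bijection $\Gamma(H_i) \to H_i$. Then, using the fact that $\Gamma(H_i)$ is a Markov group and invoking \fullref{Proposition}{prop:markovsgmon}, I would obtain a \emph{monoid} Markov language $L_i \subseteq C_i^*$ for $\Gamma(H_i)$ --- prefix-closed, regular, and crucially containing $\emptyword$. For each letter $c \in C_i$, pick a representative $t_c \in \Stab(H_i) \cap T$ of the non-identity element $\elt{c}$. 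Introduce a fresh letter $\tilde h_i$ for each basepoint, set $B = A_T \cup \{\tilde h_1, \ldots, \tilde h_n\} \cup C_1 \cup \cdots \cup C_n$ (disjoint union), with $a \in A_T$ representing its original element of $T$, with $\tilde h_i$ representing $h_i$, and with $c \in C_i$ representing $t_c$; and define
\[
K = L_T \cup \bigcup_{i=1}^{n} \tilde h_i L_i.
\]

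It then remains to verify that $K$ is a semigroup Markov language for $S$ over $B$. Regularity is immediate from the finite-union structure. For $+$-prefix-closure, a non-empty prefix of a word in $L_T$ lies in $L_T$ by hypothesis; while a non-empty prefix of $\tilde h_i v$ with $v \in L_i$ is either $\tilde h_i v'$ for $v'$ a non-empty prefix of $v$ (so $v' \in L_i$ by prefix-closure of $L_i$) or is $\tilde h_i$ itself, which equals $\tilde h_i \emptyword \in \tilde h_i L_i$ since $\emptyword \in L_i$. For bijectivity onto $S = T \cup H_1 \cup \cdots \cup H_n$, the sublanguage $L_T$ bijects onto $T$, and $\tilde h_i L_i$ bijects onto $H_i$ via the composition of $L_i \to \Gamma(H_i)$ with $\tau \mapsto h_i \tau$, using the computation that $\tilde h_i c_1\cdots c_k$ represents $h_i t_{c_1}\cdots t_{c_k} = h_i \cdot \elt{c_1\cdots c_k}_{\Gamma(H_i)}$ (here $\Stab(H_i)$ being a submonoid of $T^1$ ensures that $t_{c_1}\cdots t_{c_k} \in \Stab(H_i)$, so the action is defined); since these three families hit disjoint parts of $S$, injectivity of the combined map follows.

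The principal subtlety I expect is the need to use the \emph{monoid} version of Markov for the $\Gamma(H_i)$: if $L_i$ did not contain $\emptyword$ then the singleton $\tilde h_i$, forced into $K$ by prefix-closure, would duplicate whatever word in $\tilde h_i L_i$ represents $h_i$, breaking uniqueness of representatives. A lesser concern is choosing generators of $\Gamma(H_i)$ that lift to elements of $T$ rather than to $1 \in T^1 \setminus T$, but standard generating sets for groups are taken to be non-identity, so this is automatic.
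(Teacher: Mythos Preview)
Your proposal is correct and follows essentially the same strategy as the paper: cover $T$ by its Markov language, and cover each $H_i$ via the free transitive action of $\Gamma(H_i)$ from a fixed basepoint $h_i$, lifting generators of $\Gamma(H_i)$ to elements of $\Stab(H_i)$. The only difference is in how the basepoint is encoded: the paper takes a \emph{semigroup} Markov language $L_i$ for $\Gamma(H_i)$ and replaces the first letter $a$ of each word by a primed copy $a'$ representing $h_i s_{i,a}$, whereas you take a \emph{monoid} Markov language (so $\emptyword \in L_i$) and prepend a separate fresh letter $\tilde h_i$. Both devices solve the same problem---ensuring $+$-prefix-closure while giving $h_i$ itself a unique representative---and neither is materially simpler than the other.
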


\begin{proof}
Let $L$ be a semigroup Markov language for $T$ over some finite
alphabet $A$ representing a generating set for $T$ under the map $\phi
: A \to T$.  Since $T$ has finite Green index in $S$, there are
finitely many $T$-relative $H$-classes $H_1,\ldots,H_n$ in $S-T$. By
hypothesis, every Sch\"utzenberger group $\Gamma(H_i)$ admits a
semigroup Markov language $L_i$ over some finite alphabet $A_i$
representing a generating set for $\Gamma(H_i)$ under the map $\phi_i
: A_i \to \Gamma(H_i)$.  For brevity, let $\sigma_i = \sigma(H_i)$.

For each $i = 1,\ldots,n$, fix an element $h_i \in H_i$. For each $i =
1,\ldots,n$ and $a \in A_i$, fix elements $s_{i,a} \in \Stab(H_i)$
such that $a\phi_i = [s_{i,a}]_{\sigma_i}$.

Let $A_i'$ be a new alphabet in bijection with $A_i$ under the map
$\alpha_i : A_i \to A_i'$. (Without loss of generality, assume that
the alphabet $A$ and the various alphabets $A_i$ and $A_i'$ are
pairwise disjoint.)  Define a map $\psi_i : A_i \cup
A'_i \to S$ as follows:
\begin{equation}
\label{eq:greenindexuprep}
a\psi_i =
\begin{cases}
s_{i,a} & \text{if $a \in A_i$,} \\
h_is_{a,i} & \text{if $a \in A'_i$.}
\end{cases}
\end{equation}
Let
\[
L'_i = \big\{(a\alpha_i)u \in A'_iA_i^* : au \in L_i, a \in A_i\big\}.
\]
(So $L'_i$ is the language obtained from $L_i$ by taking each word in
$L_i \subseteq A_i^+$ and replacing its first letter with the
corresponding letter from $A'_i$.) Notice that since $L_i$ is regular
and $+$-prefix-closed, so is $L'_i$.

Since $\Gamma(H_i)$ acts regularly on $H_i$ via
\[
%\label{eq:greenindexupschutzact}
x \cdot [s]_{\sigma_i} = xs,
\]
it follows that for every $y \in H_i$ there is a unique element $[s]_{\sigma_i} \in
\Gamma(H_i)$ such that $h_i\cdot [s]_{\sigma_i} = y$. Thus it follows
from~\eqref{eq:greenindexuprep} and the fact that $L_i$ is a Markov
language for $\Gamma(H)$ that for every $y \in H_i$ there is a unique
$w \in L_i$ such that $h_i(w\phi_i) = y$. Hence,
by~\eqref{eq:greenindexuprep} and the definition of $L'_i$, for every
$y \in H_i$ there is a unique word $v \in L'_i$ with $v\psi_i =
y$. Thus $L'_i$ maps bijectively onto $H_i$.

Finally, let 
\[
K = L \cup \bigcup_{i=1}^n L'_i.
\]
Then $K$ is $+$-prefix-closed and regular. Define
\[
\psi : A \cup \bigcup_{i=1}^n \big(A_i \cup A'_i\big)  \to S, \qquad a\psi =
\begin{cases}
a\phi & \text{if $w \in A$,} \\
a\psi_i & \text{if $w \in A_i \cup A'_i$.}
\end{cases}
\]
Then $\phi$ maps $K$ bijectively onto $S$. Hence $K$ is a semigroup
Markov language for $L$ .
\end{proof}

\fullref{Proposition}{prop:fingreenindexup} parallels
\cite[Theorem~6.1]{cgr_greenindex}, which shows that if $T$ is a
finite Green index subsemigroup of $S$, and $T$ \emph{and} all the
Sch\"utzenberger groups of the $T$-relative $\rel{H}$-classes in $S-T$
are finitely presented, then $S$ is finitely presented. (As remarked
above, without the condition on the finite presentability, this result
does not hold.) This is in marked contrast to the situation for
automatic groups: even if $T$ and all the Sch\"utzenberger groups are
automatic, $S$ may not be automatic; see
\cite[Example~10.3]{cgr_greenindex}.

\begin{question}
Let $T$ be a subsemigroup of finite Green index in a semigroup $S$.
Let also $S$ be Markov. Is $T$ Markov?
\end{question}

\begin{question}
Is the property of being Markov preserved under passing to
subsemigroups and extensions of finite Grigorchuk index for finitely
generated cancellative semigroups (so that both of the semigroups are
finitely generated)?
\end{question}

\section{The class of Markov languages}
\label{sec:markovlang}

This final section examines the class of languages that are Markov
languages for some semigroup or monoid. First, notice that not every
regular language is a Markov language:

\begin{example}
\label{ex:nonmarkovlang}
Let $L = a^+ \cup a^+b$. Suppose $L$ is a Markov language for a
semigroup $S$. Then $\elt{b}$ lies in $S$ and so must be represented
by an element of $L$. If $\elt{b} = \elt{a^k}$ for some $k$ then
$\elt{ab} = \elt{aa^k} = \elt{a^{k+1}}$. Since both $ab$ and $a^{k+1}$
lie in $L$, this contradicts the uniqueness of representives in
$L$. If, on the other hand, $\elt{b} = \elt{a^kb}$ for some $k$, then
$\elt{ab} = \elt{aa^kb} = \elt{a^{k+1}b}$, again contradicting the
uniqueness of representives in $L$. So $L$ is not a semigroup Markov
language.

Indeed, if instead $L' = L \cup \{\emptyword\} = a^* \cup a^+b$, then
the same contradictions show that $L'$ is not a monoid Markov
language.
\end{example}

Starting from a Markov language and adding or removing a finite number
of words can yield a prefix-closed regular language that is not a
Markov language, as the following two examples show:

\begin{example}
Let $K = L' \cup \{b\} = a^* \cup a^*b$, where $L'$ is the language
from \fullref{Example}{ex:nonmarkovlang}. Then $K$ is a Markov language for the
semigroup presented by $\pres{a,b}{(b^2,b),(ba,b)}$. To see this,
notice that $(\{a,b\},\{(b^2,b),(ba,b)\})$ is a confluent noetherian
rewriting system and its language of normal forms is $K$, and apply
\fullref{Proposition}{prop:rsmarkov}. Thus removing the single word $b$
from the Markov language $K$ yields the non-Markov language $L'$.
\end{example}

\begin{example}
Let $L = a^* \cup \{a^2c,a^4c\}$. Suppose $L$ is a Markov language for
a semigroup $S$. Then $\elt{ac}$ lies in $S$ and so must be
represented by an element of $L$. Now, if $\elt{ac} = \elt{a^\alpha}$,
then $\elt{a^2c} = \elt{a^{\alpha+1}}$, contradicting the uniqueness of
representatives in $L$. If $\elt{ac} = \elt{a^2c}$, then $\elt{ac} =
\elt{a^2c} = \elt{a^3c} = \elt{a^4c}$, again contradicting the
uniqueness of representatives in $L$. So $\elt{ac} = \elt{a^4c}$.

Now, $\elt{a^3c}$ must also be represented by an element of $L$. If
$\elt{a^3c} = \elt{a^\alpha}$, then $\elt{a^4c} = \elt{a^{\alpha+1}}$,
  contradicting the uniqueness of representatives in $L$. If
  $\elt{a^3c} = \elt{a^2c}$, then $\elt{a^2c} = \elt{a^3c} =
  \elt{a^4c}$, again contradicting the uniqueness of representatives
  in $L$. So $\elt{a^3c} = \elt{a^4c}$, which, by the preceding
  paragraph, implies $\elt{ac} = \elt{a^3c}$, which in turn implies
  $\elt{a^2c} = \elt{a^4c}$. This contradicts the uniqueness of
  representatives in $L$, and so $L$ cannot be a Markov language.

Thus adding the two words $a^2c$ and $a^4c$ to the Markov language
$a^*$ yields the non-Markov language $L$.
\end{example}

There are two main questions about the class of Markov languages:

\begin{question}
Is there an algorithm that takes a regular language that is prefix-closed
or $+$-prefix-closed and decides whether it is
a Markov language for some monoid or semigroup?
\end{question}

\begin{question}
Is every finite language that is prefix-closed or $+$-prefix-closed a
Markov language for a (necessarily finite) monoid or semigroup?
\end{question}

%\section{Questions}

%% \begin{question}
%% Is it true that every Markov semigroup is given by a confluent
%% noetherian system with regular left-hand-sides?
%% \end{question}

\bibliography{\jobname,automaticsemigroups,automaticpresentations,languages,presentations,semigroups,c_publications}

\newcommand{\etalchar}[1]{$^{#1}$}
\begin{thebibliography}{HKOT02}

\bibitem[ABC{\etalchar{+}}91]{alonso_wordhyperbolic}
J.~M. Alonso, T. Brady, D. Cooper, V. Ferlini, Lustig. M., M. Mihalik, M. Shapiro, \& H. Short.
\newblock `Notes on word hyperbolic groups'.
\newblock In H. Short, ed., {\em Group Theory from a Geometrical Viewpoint ({T}rieste, 1990)}, pp. 3--63. World Scientific Publishing, River Edge, NJ, 1991.
\newblock {\sc url:} \href{http://www.cmi.univ-mrs.fr/~hamish/Papers/MSRInotes2004.pdf}{\nolinkurl{www.cmi.univ-mrs.fr/~hamish/Papers/MSRInotes2004.pdf}}.

\bibitem[BO93]{book_srs}
R.~V. Book \& F. Otto.
\newblock {\em {String-Rewriting Systems}}.
\newblock Texts and Monographs in Computer Science. Springer-Verlag, New York, 1993.

\bibitem[Cai05]{c_phdthesis}
A.~J. Cain.
\newblock {\em Presentations for Subsemigroups of Groups}.
\newblock {Ph.D.\ Thesis}, University of St~Andrews, 2005.
\newblock {\sc url:} \href{http://www-groups.mcs.st-andrews.ac.uk/~alanc/pub/c\_phdthesis.pdf}{\nolinkurl{www-groups.mcs.st-andrews.ac.uk/~alanc/pub/c\_phdthesis.pdf}}.

\bibitem[Cai09]{c_ssgofdp}
A.~J. Cain.
\newblock `Malcev presentations for subsemigroups of direct products of coherent groups'.
\newblock {\em J. Pure Appl. Algebra}, 213, no.~6 (2009), pp. 977--990.
\newblock {\sc doi:} \href {http://dx.doi.org/10.1016/j.jpaa.2008.10.006} {{10.1016/j.jpaa.2008.10.006}}.

\bibitem[CGR]{cgr_greenindex}
A.~J. Cain, R. Gray, \& N. Ru{\v{s}}kuc.
\newblock `Green index in semigroup theory: generators, presentations, and automatic structures'.
\newblock Submitted.
\newblock {\sc url:} \href{http://www-groups.mcs.st-andrews.ac.uk/~alanc/pub/cgr\_greenindex.pdf}{\nolinkurl{www-groups.mcs.st-andrews.ac.uk/~alanc/pub/cgr\_greenindex.pdf}}, arXiv:~\href {http://arxiv.org/abs/0912.1266} {{0912.1266}}.

\bibitem[CM]{cm_wordhypunique}
A.~J. Cain \& V. Maltcev.
\newblock `Context-free rewriting systems and word-hyperbolic structures with uniqueness'.
\newblock Submitted.
\newblock {\sc url:} \href{http://www-groups.mcs.st-andrews.ac.uk/~alanc/pub/cm\_wordhypunique.pdf}{\nolinkurl{www-groups.mcs.st-andrews.ac.uk/~alanc/pub/cm\_wordhypunique.pdf}}, arXiv:~\href {http://arxiv.org/abs/1201.6616} {{1201.6616}}.

\bibitem[CORT09]{cort_apsg}
A.~J. Cain, G. Oliver, N. Ru{\v{s}}kuc, \& R.~M. Thomas.
\newblock `Automatic presentations for semigroups'.
\newblock {\em Inform. and Comput.}, 207, no.~11 (2009), pp. 1156--1168.
\newblock {\sc doi:} \href {http://dx.doi.org/10.1016/j.ic.2009.02.005} {{10.1016/j.ic.2009.02.005}}.

\bibitem[CP67]{clifford_semigroups2}
A.~H. Clifford \& G.~B. Preston.
\newblock {\em {The Algebraic Theory of Semigroups {\rm (Vol.~II)}}}.
\newblock No.~7 in {\em Mathematical Surveys}. American Mathematical Society, Providence, R.I., 1967.

\bibitem[CRRT00]{campbell_dirprodautsg}
C.~M. Campbell, E.~F. Robertson, N. Ru{\v{s}}kuc, \& R.~M. Thomas.
\newblock `Direct products of automatic semigroups'.
\newblock {\em J. Austral. Math. Soc. Ser. A}, 69, no.~1 (2000), pp. 19--24.
\newblock {\sc doi:} \href {http://dx.doi.org/10.1017/S1446788700001816} {{10.1017/S1446788700001816}}.

\bibitem[CRRT01]{campbell_autsg}
C.~M. Campbell, E.~F. Robertson, N. Ru{\v{s}}kuc, \& R.~M. Thomas.
\newblock `Automatic semigroups'.
\newblock {\em Theoret. Comput. Sci.}, 250, no.~1--2 (2001), pp. 365--391.
\newblock {\sc doi:} \href {http://dx.doi.org/10.1016/S0304-3975(99)00151-6} {{10.1016/S0304-3975(99)00151-6}}.

\bibitem[CS01]{cutting_normalforms}
A. Cutting \& A. Solomon.
\newblock `Remarks concerning finitely generated semigroups having regular sets of unique normal forms'.
\newblock {\em J. Aust. Math. Soc.}, 70, no.~3 (2001), pp. 293--309.
\newblock {\sc doi:} \href {http://dx.doi.org/10.1017/S1446788700002354} {{10.1017/S1446788700002354}}.

\bibitem[DG04]{duncan_hyperbolic}
A. Duncan \& R.~H. Gilman.
\newblock `Word hyperbolic semigroups'.
\newblock {\em Math. Proc. Cambridge Philos. Soc.}, 136, no.~3 (2004), pp. 513--524.
\newblock {\sc doi:} \href {http://dx.doi.org/10.1017/S0305004103007497} {{10.1017/S0305004103007497}}.

\bibitem[Die86]{diekert_commutative}
V. Diekert.
\newblock `Commutative monoids have complete presentations by free                   (noncommutative) monoids'.
\newblock {\em Theoret. Comput. Sci.}, 46, no.~2-3 (1986), pp. 319--327.
\newblock {\sc doi:} \href {http://dx.doi.org/10.1016/0304-3975(86)90037-X} {{10.1016/0304-3975(86)90037-X}}.

\bibitem[DRR99]{duncan_change}
A.~J. Duncan, E.~F. Robertson, \& N. Ru{\v{s}}kuc.
\newblock `Automatic monoids and change of generators'.
\newblock {\em Math. Proc. Cambridge Philos. Soc.}, 127, no.~3 (1999), pp. 403--409.
\newblock {\sc doi:} \href {http://dx.doi.org/10.1017/S0305004199003722} {{10.1017/S0305004199003722}}.

\bibitem[ECH{\etalchar{+}}92]{epstein_wordproc}
D.~B. A. Epstein, J.~W. Cannon, D.~F. Holt, S.~V. F. Levy, M.~S. Paterson, \& W.~P. Thurston.
\newblock {\em {Word Processing in Groups}}.
\newblock Jones \& Bartlett, Boston, Mass., 1992.

\bibitem[Eld05]{elder_baumslagsolitar}
M. Elder.
\newblock `A context-free and a 1-counter geodesic language for a               {B}aumslag-{S}olitar group'.
\newblock {\em Theoret. Comput. Sci.}, 339, no.~2-3 (2005), pp. 344--371.
\newblock {\sc doi:} \href {http://dx.doi.org/10.1016/j.tcs.2005.03.026} {{10.1016/j.tcs.2005.03.026}}.

\bibitem[GdlH90a]{ghys_markov}
{\'E}. Ghys \& P. de~la Harpe.
\newblock `La propri\'{e}t\'{e} de {M}arkov pour les groupes hyperboliques'.
\newblock In Ghys \& de~la Harpe \cite{ghys_hyperboliques}, pp. 165--187.

\bibitem[GdlH90b]{ghys_hyperboliques}
{\'E}. Ghys \& P. de~la Harpe, eds.
\newblock {\em Sur les groupes hyperboliques d'apr\`es {M}ikhael {G}romov}, vol.~83 of {\em Progress in Mathematics}, Boston, MA, 1990. Birkh\"auser Boston Inc.

\bibitem[GdlH97]{grigorchuk_growthsurvey}
R. Grigorchuk \& P. de~la Harpe.
\newblock `On problems related to growth, entropy, and spectrum in group               theory'.
\newblock {\em J. Dynam. Control Systems}, 3, no.~1 (1997), pp. 51--89.
\newblock {\sc doi:} \href {http://dx.doi.org/10.1007/BF02471762} {{10.1007/BF02471762}}.

\bibitem[Ghy90]{ghys_groupeshyperboliquessurvey}
{\'E}. Ghys.
\newblock `Les groupes hyperboliques'.
\newblock {\em Ast\'erisque}, , no.~189--190 (1990), pp. Exp.\ No.\ 722, 203--238.
\newblock S{\'e}minaire Bourbaki, Vol. 1989/90.
\newblock {\sc url:} \href{http://www.numdam.org/item?id=SB\_1989-1990\_\_32\_\_203\_0}{\nolinkurl{www.numdam.org/item?id=SB\_1989-1990\_\_32\_\_203\_0}}.

\bibitem[GR08]{gray_green1}
R. Gray \& N. Ru{\v{s}}kuc.
\newblock `Green index and finiteness conditions for semigroups'.
\newblock {\em J. Algebra}, 320, no.~8 (2008), pp. 3145--3164.
\newblock {\sc doi:} \href {http://dx.doi.org/10.1016/j.jalgebra.2008.07.008} {{10.1016/j.jalgebra.2008.07.008}}.

\bibitem[Gri93]{grillet_redei}
P.~A. Grillet.
\newblock `A short proof of {R}{\'{e}}dei's theorem'.
\newblock {\em Semigroup Forum}, 46, no.~1 (1993), pp. 126--127.
\newblock {\sc doi:} \href {http://dx.doi.org/10.1007/BF02573555} {{10.1007/BF02573555}}.

\bibitem[Gro87]{gromov_hyperbolic}
M. Gromov.
\newblock `Hyperbolic groups'.
\newblock In S.~M. Gersten, ed., {\em Essays in group theory}, vol.~8 of {\em Math. Sci. Res. Inst. Publ.}, pp. 75--263. Springer, New York, 1987.

\bibitem[Gro96]{groves_minimallength}
J.~R. J. Groves.
\newblock `Minimal length normal forms for some soluble groups'.
\newblock {\em J. Pure Appl. Algebra}, 114, no.~1 (1996), pp. 51--58.
\newblock {\sc doi:} \href {http://dx.doi.org/10.1016/0022-4049(95)00165-4} {{10.1016/0022-4049(95)00165-4}}.

\bibitem[HKOT02]{hoffmann_relatives}
M. Hoffmann, D. Kuske, F. Otto, \& R.~M. Thomas.
\newblock `Some relatives of automatic and hyperbolic groups'.
\newblock In G.~M. S. Gomes, J.~{\'{E}}. Pin, \& P.~V. Silva, eds, {\em Semigroups, Algorithms, Automata and Languages                   ({C}oimbra, 2001)}, pp. 379--406. World Scientific Publishing, River Edge, N.J., 2002.

\bibitem[How95]{howie_fundamentals}
J.~M. Howie.
\newblock {\em {Fundamentals of Semigroup Theory}}, vol.~12 of {\em London Mathematical Society Monographs {\rm(New Series)}}.
\newblock Clarendon Press, Oxford University Press, New York, 1995.

\bibitem[HTR02]{hoffmann_autfinrees}
M. Hoffmann, R.~M. Thomas, \& N. Ru{\v{s}}kuc.
\newblock `Automatic semigroups with subsemigroups of finite {R}ees               index'.
\newblock {\em Internat. J. Algebra Comput.}, 12, no.~3 (2002), pp. 463--476.
\newblock {\sc doi:} \href {http://dx.doi.org/10.1142/S0218196702000833} {{10.1142/S0218196702000833}}.

\bibitem[HU79]{hopcroft_automata}
J.~E. Hopcroft \& J.~D. Ullman.
\newblock {\em {Introduction to Automata Theory, Languages, and Computation}}.
\newblock Addison--Wesley Publishing Co., Reading, Mass., 1979.

\bibitem[KO06]{kambites_decision}
M. Kambites \& F. Otto.
\newblock `Uniform decision problems for automatic semigroups'.
\newblock {\em J. Algebra}, 303, no.~2 (2006), pp. 789--809.
\newblock {\sc doi:} \href {http://dx.doi.org/10.1016/j.jalgebra.2005.11.028} {{10.1016/j.jalgebra.2005.11.028}}.

\bibitem[Lal74]{lallement_onerelmon}
G. Lallement.
\newblock `On monoids presented by a single relation'.
\newblock {\em J. Algebra}, 32 (1974), pp. 370--388.
\newblock {\sc doi:} \href {http://dx.doi.org/10.1016/0021-8693(74)90146-X} {{10.1016/0021-8693(74)90146-X}}.

\bibitem[Mal53]{malcev_nilpotent}
A.~I. Malcev.
\newblock `Nilpotent semigroups'.
\newblock {\em Ivanov.\ Gos.\ Ped.\ Inst.\ U{\v{c}}en.\ Zap.\ Fiz.-Mat.\ Nauki}, 4 (1953), pp. 107--111.
\newblock [In Russian.].

\bibitem[Mal09]{maltcev_cayley}
V. Maltcev.
\newblock `Cayley automaton semigroups'.
\newblock {\em Internat. J. Algebra Comput.}, 19, no.~1 (2009), pp. 79--95.
\newblock {\sc doi:} \href {http://dx.doi.org/10.1142/S021819670900497X} {{10.1142/S021819670900497X}}.

\bibitem[Mar51]{markov_impossibility}
A. Markov.
\newblock `The impossibility of certain algorithms in the theory of               associative systems'.
\newblock {\em Doklady Akad. Nauk SSSR (N.S.)}, 77 (1951), pp. 19--20.

\bibitem[MR]{maltcev_hopfian}
V. Maltcev \& N. Ru{\v{s}}kuc.
\newblock `Hopfian property and rees index for semigroups'.
\newblock In preparation.

\bibitem[Nek05]{nekrashevych_ssg}
V. Nekrashevych.
\newblock {\em Self-similar groups}, vol. 117 of {\em Mathematical Surveys and Monographs}.
\newblock American Mathematical Society, Providence, RI, 2005.

\bibitem[NS97]{neumann_regulargeodesic}
W.~D. Neumann \& M. Shapiro.
\newblock `Regular geodesic normal forms in virtually abelian groups'.
\newblock {\em Bull. Austral. Math. Soc.}, 55, no.~3 (1997), pp. 517--519.
\newblock {\sc doi:} \href {http://dx.doi.org/10.1017/S0004972700034171} {{10.1017/S0004972700034171}}.

\bibitem[OKK98]{otto_infiniteconvergent}
F. Otto, M. Katsura, \& Y. Kobayashi.
\newblock `Infinite convergent string-rewriting systems and               cross-sections for finitely presented monoids'.
\newblock {\em J. Symbolic Comput.}, 26, no.~5 (1998), pp. 621--648.
\newblock {\sc doi:} \href {http://dx.doi.org/10.1006/jsco.1998.0230} {{10.1006/jsco.1998.0230}}.

\bibitem[OT05]{oliver_autopresgroups}
G.~P. Oliver \& R.~M. Thomas.
\newblock `Automatic presentations for finitely generated groups'.
\newblock In V. Diekert \& B. Durand, eds, {\em 22nd Annual Symposium on Theoretical Aspects of Computer Science (STACS'05), Stuttgart, Germany}, vol. 3404 of {\em Lecture Notes in Comput. Sci.}, pp. 693--704, Berlin, 2005. Springer.
\newblock {\sc doi:} \href {http://dx.doi.org/10.1007/978-3-540-31856-9\_57} {{10.1007/978-3-540-31856-9\_57}}.

\bibitem[R{\'{e}}d63]{redei_commutative_german}
L. R{\'{e}}dei.
\newblock {\em Theorie der Endlich Erzeugbaren Kommutativen Halbgruppen}, vol.~41 of {\em Hamburger Mathematische Einzelschriften}.
\newblock Physica-Verlag, W{\"{u}}rzburg, 1963.
\newblock [In German. See \cite{redei_commutative} for a translation.].

\bibitem[R{\'{e}}d65]{redei_commutative}
L. R{\'{e}}dei.
\newblock {\em The Theory of Finitely Generated Commutative Semigroups}.
\newblock Pergamon Press, Oxford, 1965.
\newblock [Translated from the German. Edited by N.~Reilly.].

\bibitem[RGS99]{rosales_commutative}
J.~C. Rosales \& P.~A. Garc{\'{\i}}a-S{\'{a}}nchez.
\newblock {\em {Finitely Generated Commutative Monoids}}.
\newblock Nova Science Publishers Inc., Commack, N.Y., 1999.

\bibitem[Ros74]{rosenblatt_invariant}
J.~M. Rosenblatt.
\newblock `Invariant measures and growth conditions'.
\newblock {\em Trans. Amer. Math. Soc.}, 193 (1974), pp. 33--53.
\newblock {\sc doi:} \href {http://dx.doi.org/10.2307/1996899} {{10.2307/1996899}}.

\bibitem[RRW98]{robertson_dirprod}
E.~F. Robertson, N. Ru{\v{s}}kuc, \& J. Wiegold.
\newblock `Generators and relations of direct products of semigroups'.
\newblock {\em Trans. Amer. Math. Soc.}, 350, no.~7 (1998), pp. 2665--2685.
\newblock {\sc doi:} \href {http://dx.doi.org/10.1090/S0002-9947-98-02074-1} {{10.1090/S0002-9947-98-02074-1}}.

\bibitem[RT98]{ruskuc_syntactic}
N. Ru{\v{s}}kuc \& R.~M. Thomas.
\newblock `Syntactic and {R}ees indices of subsemigroups'.
\newblock {\em J. Algebra}, 205, no.~2 (1998), pp. 435--450.
\newblock {\sc doi:} \href {http://dx.doi.org/10.1006/jabr.1997.7392} {{10.1006/jabr.1997.7392}}.

\bibitem[Ru{\v{s}}98]{ruskuc_largesubsemigroups}
N. Ru{\v{s}}kuc.
\newblock `On large subsemigroups and finiteness conditions of semigroups'.
\newblock {\em Proc. London Math. Soc. (3)}, 76, no.~2 (1998), pp. 383--405.
\newblock {\sc doi:} \href {http://dx.doi.org/10.1112/S0024611598000124|} {{10.1112/S0024611598000124|}}.

\bibitem[Sim94]{sims_computation}
C.~C. Sims.
\newblock {\em Computation with Finitely Presented Groups}, vol.~48 of {\em Encyclopedia of Mathematics and its Applications}.
\newblock Cambridge University Press, Cambridge, 1994.

\bibitem[SOK94]{squier_fdt}
C.~C. Squier, F. Otto, \& Y. Kobayashi.
\newblock `A finiteness condition for rewriting systems'.
\newblock {\em Theoret. Comput. Sci.}, 131, no.~2 (1994), pp. 271--294.
\newblock {\sc doi:} \href {http://dx.doi.org/10.1016/0304-3975(94)90175-9} {{10.1016/0304-3975(94)90175-9}}.

\bibitem[SS05]{silva_lamplighter}
P.~V. Silva \& B. Steinberg.
\newblock `On a class of automata groups generalizing lamplighter groups'.
\newblock {\em Internat. J. Algebra Comput.}, 15, no.~5-6 (2005), pp. 1213--1234.

\bibitem[Sto96]{stoll_growthseries}
M. Stoll.
\newblock `Rational and transcendental growth series for the higher               {H}eisenberg groups'.
\newblock {\em Invent. Math.}, 126, no.~1 (1996), pp. 85--109.
\newblock {\sc doi:} \href {http://dx.doi.org/10.1007/s002220050090} {{10.1007/s002220050090}}.

\end{thebibliography}
\bibliographystyle{alphaabbrv}

\end{document}